\newtheorem{theorem}{Theorem}[section]
\newtheorem{remark}{Remark}[section]
\newtheorem{lemma}{Lemma}[section]
\numberwithin{equation}{section}
\def\d{\mathrm{d}}
\def\no{\nonumber}
\def\R{\mathbb{R}}
\def\eps{\epsilon}
\def\div{\mathrm{div}}
\def\u{\mathrm{v}}
\def\w{\mathrm{w}}
\def\dr{\mathrm{d}}
\def\v{\mathrm{u}}
\def\l{\langle}
\def\r{\rangle}
\def\A{\mathrm{A}}
\def\N{\mathrm{N}}
\def\g{\mathrm{g}}
\def\B{\mathrm{B}}
\def\J{\mathcal{J}}
\newcounter{wronumber}\setcounter{wronumber}{1}
\begin{document}
\title[Parabolic-hyperbolic liquid crystal model]
			{On well-posedness of Ericksen-Leslie's parabolic-hyperbolic liquid crystal model}

\author[N. Jiang]{Ning Jiang}
\address[Ning Jiang]{\newline School of Mathematics and Statistics, Wuhan University, Wuhan, 430072, P. R. China}
\email{njiang@whu.edu.cn}

\author[Y. L. Luo]{Yilong Luo}
\address[Yi-Long Luo]
		{\newline Institute of Mathematics, Academy of Mathematics ans System Science, Chinese Academy of Sciences, Beijing, 100190, P. R. China}
\email{yl-luo@amss.ac.cn}
\thanks{ \today}

\maketitle

%摘要
\begin{abstract}
 We establish the following well-posedness results on Ericksen-Leslie's parabolic-hyperbolic liquid crystal model: 1, if the dissipation coefficients $\beta = \mu_4 - 4 \mu_6 > 0$, and the size of the initial energy $E^{in}$ is small enough, then the life span of the solution is at least $-O(\ln E^{in})$; 2, for the special case that the coefficients $\mu_1 = \mu_2 = \mu_3 = \mu_5 = \mu_6 = 0$, for which the model is the Navier-Stokes equations coupled with the wave map from $\mathbb{R}^n$ to $\mathbb{S}^2$, the same existence result holds but without the smallness restriction on the size of the initial data; 3, with further constraints on the coefficients, namely $\alpha = \mu_4 - 4 \mu_6 - \tfrac{ (|\lambda_1| - 7 \lambda_2)^2 }{\eta} - \tfrac{ 2 ( 7 |\lambda_1| - 2\lambda_2 )^2 }{ |\lambda_1| } > 0 $ and $\mu_2 < \mu_3$, the global classical solution with small initial data can be established. A relation between the Lagrangian multiplier and the geometric constraint $|\dr|=1$ plays a key role in the proof.
\end{abstract}

%\vspace*{10pt}

%\phantomsection
%\addcontentsline{toc}{section}{\contentsname}

%目录
%\tableofcontents

%正文
%%%%%%%%%%%%%%%%%%%%%%%%%%%%%%%%%%%%%%%%%%%%%%%%%%%%%%%%%%%%%%%%%%%%%%%%%%%
%%%%%%%%%%%%%%%%%%%%%%%%%%%%%%%%%%%%%%%%%%%%%%%%%%%%%%%%%%%%%%%%%%%%%%%%%%%

\section{Introduction}

The hydrodynamic theory of liquid crystals was established by Ericksen \cite{Ericksen-1961-TSR, Ericksen-1987-RM, Ericksen-1990-ARMA} and Leslie \cite{Leslie-1968-ARMA, Leslie-1979} in the 1960's (see also Section 5.1 of \cite{Lin-Liu-2001} ). The so-called Ericksen-Leslie system consists of the following equations of $(\rho(x,t), \v(x,t), \dr(x,t)$, where $(x,t)\in \mathbb{R}^n\times \mathbb{R}^+$ with $n \geq 2$ :
\begin{align}\label{EL-general}
  \left\{ \begin{array}{c}
    \partial_t \rho + \div(\rho\v) = 0\,,\\
    \rho \dot\v = \rho \mathrm{F} + \div \hat{\sigma}\,,\\
    \rho_1 \dot\omega = \rho_1 \mathrm{G} + \hat{\mathrm{g}} + \div \pi\,.
  \end{array}\right.
\end{align}
The system \eqref{EL-general} represents the conservations laws of mass, linear momentum and angular momentum respectively. Here, $\rho$ is the fluid density, $\rho_1 \geq 0$ is an inertial constant,  $\v = (\v_1,\cdots, \v_n)^\top$ is the flow velocity, $\dr = (\dr_1,\cdots, \dr_n)^\top$ is the direction field of the liquid molecules with the constraint $|\dr|=1$. Furthermore, $\hat{g}$ is the intrinsic force associated with $\dr$, $\pi$ is the director stress, $\mathrm{F}$ and $\mathrm{G}$ are external body force and external director body force, respectively. The superposed dot denotes the material derivative $\partial_t + \v\cdot\nabla$. The following notations
\begin{equation}\nonumber
\begin{aligned}
  &\A = \tfrac{1}{2}(\nabla \v + \nabla^\top\v)\,,\quad \B= \tfrac{1}{2}(\nabla \v - \nabla^\top\v)\,,\\
  &\omega = \dot \dr = \partial_t \dr + (\v\cdot \nabla)\dr\,,\quad \N = \omega - \B \dr\,,
\end{aligned}
\end{equation}
represent the rate of strain tensor, skew-symmetric part of the strain rate, the material derivative of $\dr$ and the rigid rotation part of director changing rate by fluid vorticity, respectively.

The constitutive relations for $\hat{\sigma}$, $\pi$ and $\hat{\g}$ are given by:
\begin{equation}\label{Constitutive}
  \begin{aligned}
    \hat{\sigma}_{ij}& = -p\delta_{ij} + \sigma_{ij} - \rho \tfrac{\partial W}{\partial\dr_{k,i}}\dr_{k,j}\,,\\
    \pi_{ij}& = \beta_i \dr_j + \rho \tfrac{\partial W}{\partial\dr_{j,i}}\,,\\
    \hat{\g}_{ij} & = \gamma \dr_i - \beta_j \dr_{i,j} - \rho \tfrac{\partial W}{\partial \dr_i} + \g_i\,.
  \end{aligned}
\end{equation}
Here $p$ is the pressure, the vector $\beta=(\beta_1\,,\cdots\,,\beta_n)^\top$ and the scalar function $\gamma$ are Lagrangian multipliers for the constraint $|\dr|=1$, and $W$ is the Oseen-Frank energy functional for the equilibrium configuration of a unit director field:
\begin{equation}\label{Oseen-Frank-energy}
  \begin{aligned}
    2W =& k_1 (\div \dr)^2 + k_2 |\dr \cdot (\nabla \times \dr)|^2 + k_3 |\dr \times (\nabla \times \dr)|^2 \\
    & + (k_2 + k_4) \left[ \mathrm{tr} (\nabla \dr)^2 - (\div \dr)^2 \right]\, ,
  \end{aligned}
\end{equation}
where the coefficients $k_1,\ k_2,\ k_3$, and $k_4$ are the measure of viscosity, depending on the material and the temperature.

The kinematic transport $\g$ is given by:
\begin{equation}\label{hat-g}
  \g_i = \lambda_1 \N_i + \lambda_2 \dr_j \A_{ij}
\end{equation}
which represents the effect of the macroscopic flow field on the microscopic structure. The material coefficients $\lambda_1$ and $\lambda_2$ reflects the molecular shape and the slippery part between the fluid and the particles. The first term of \eqref{hat-g} represents the rigid rotation of the molecule, while the second term stands for the stretching of the molecule by the flow.

The stress tensor $\sigma$ has the following form:
\begin{equation}\label{Extra-Sress-sigma}
  \begin{aligned}
    \sigma_{ij}=  \mu_1 \dr_k \A_{kp}\dr_p  \dr_i \dr_j + \mu_2 \N_i \dr_j  + \mu_3 \dr_i \N_j  + \mu_4 \A_{ij} + \mu_5 \A_{ik}\dr_k \dr_j   + \mu_6 \dr_i \A_{jk}\dr_k \,.
  \end{aligned}
\end{equation}
These coefficients $\mu_i (1 \leq i \leq 6)$ which may depend on material and temperature, are usually called Leslie coefficients, and are related to certain local correlations in the fluid. Usually, the coefficients $\mu_4 >0$, $\mu_i \geq 0$ for  $(1 \leq i \leq 6$, $i\neq 4)$. Moreover, the following relations are frequently introduced in the literature.
\begin{equation}\label{Coefficients-Relations}
  \lambda_1=\mu_2-\mu_3\,, \quad\lambda_2 = \mu_5-\mu_6\,,\quad \mu_2+\mu_3 = \mu_6-\mu_5\,.
\end{equation}
The first two relations are necessary conditions in order to satisfy the equation of motion identically, while the third relation is called {\em Parodi's relation}, which is derived from Onsager reciprocal relations expressing the equality of certain relations between flows and forces in thermodynamic systems out of equilibrium. Under Parodi's relation, we see that the dynamics of an incompressible nematic liquid crystal flow involve five independent Leslie coefficients in \eqref{Extra-Sress-sigma}.

For simplicity, in this paper, we assume the external forces vanish, that is, $\mathrm{F}=0$, $\mathrm{G}=0$, and the density is constant, i.e. $\rho=1$, which immediately yields the incompressibility $\div \v=0\,.$ Moreover, we take $k_1 = k_2 = k_3 = 1$, $k_4 = 0$ in \eqref{Oseen-Frank-energy}, then
$$ 2W = |\dr \cdot (\nabla \times \dr)|^2 + |\dr \times (\nabla \times \dr)|^2 + \mathrm{tr} (\nabla \dr)^2\, . $$
Since $|\dr| = 1$, this can be further simplified as $ 2W = |\nabla \dr|^2 $, which implies
\begin{equation}\label{Special-Oseen-Frank-Energy-Derivative}
  \tfrac{\partial W}{\partial \dr_i} = 0,\  \tfrac{\partial W}{\partial(\partial_j \dr_i)} = \partial_j \dr_i\, .
\end{equation}
Taking $\beta_i=0$, then $\pi_{ij}$ reduces to
\begin{equation}\label{d-Dissipative-term}
  \partial_j \pi_{ji} = \partial_j \left( \tfrac{\partial W}{\partial(\partial_j \dr_i)} \right) = \partial_j (\partial_j \dr_i) = \Delta \dr_i\,.
\end{equation}
Thus the third equation of \eqref{EL-general} is
\begin{equation}\label{d-equation-Hyper-Parab}
  \rho_1\ddot{\dr} = \Delta \dr + \gamma \dr + \lambda_1 (\dot{\dr} - \mathrm{B} \dr) + \lambda_2 \A \dr \, .
\end{equation}
Since $|\dr|=1$, it is derived from multiplying $\dr$ in \eqref{d-equation-Hyper-Parab} that
\begin{equation}\label{Lagrange-Multiplier}
  \gamma \equiv \gamma (\v, \dr, \dot{\dr}) = - \rho_1 |\dot{\dr}|^2 + |\nabla \dr|^2 - \lambda_2 \dr^\top \A \dr\, .
\end{equation}
The detailed derivation of \eqref{Lagrange-Multiplier} will be given later with deeper comments on its meanings and implications.

Combining the first equality of \eqref{Constitutive} and \eqref{Special-Oseen-Frank-Energy-Derivative}, one can obtain that
\begin{equation}\no
    \div \hat{\sigma} = - \nabla p - \div (\nabla \dr \odot \nabla \dr)  + \div \sigma\,,
\end{equation}
where $(\nabla \dr \odot \nabla \dr)_{ij} = \sum_k \partial_i \dr_k \partial_j \dr_k\,.$ On the other hand, it can yield that by \eqref{Extra-Sress-sigma}
\begin{equation}
  \no \div \sigma= \tfrac{1}{2} \mu_4 \Delta \v + \div \tilde{\sigma}\,,
\end{equation}
where
\begin{align}
  \no \tilde{\sigma}_{ij} \equiv \big{(}\tilde{\sigma}(\v, \dr, \dot{\dr})\big{)}_{ij}  = & \mu_1 \dr_k \dr_p \A_{kp} \dr_i \dr_j + \mu_2 \dr_j (\dot{\dr}_i + \B_{ki} \dr_k)  \\
  \no &+ \mu_3 \dr_i (\dot{\dr}_j + \B_{kj} \dr_k)    +  \mu_5 \dr_j \dr_k \A_{ki} + \mu_6 \dr_i \dr_k \A_{kj} \,.
\end{align}

Hence,  Ericksen-Leslie's parabolic-hyperbolic liquid crystal model reduces to the following form:
\begin{equation}\label{Parabolic-Hyperbolic-Liquid-Crystal-Model}
  \begin{aligned}
    \left\{ \begin{array}{c}
      \partial_t \v + \v \cdot \nabla \v - \frac{1}{2} \mu_4 \Delta \v + \nabla p = - \div (\nabla \dr \odot \nabla \dr) + \div \tilde{\sigma}\, , \\
      \div \v = 0\, ,\\
     \rho_1 \ddot{\dr} = \Delta \dr + \gamma \dr + \lambda_1 (\dot{\dr}- \B \dr) + \lambda_2 \A \dr\, ,
    \end{array}\right.
  \end{aligned}
\end{equation}
on $ \R^n \times \R^+$ with the constraint $|\dr|=1$, where the Lagrangian multiplier $\gamma$ is given by \eqref{Lagrange-Multiplier}.

In this paper, our main concern is the Cauchy problem of \eqref{Parabolic-Hyperbolic-Liquid-Crystal-Model} with the initial data:
\begin{equation}\label{Inital-Data}
  \v|_{t=0} = \v^{in}(x),\ \dot{\dr}|_{t=0} = {\tilde\dr}^{in}(x),\ \dr|_{t=0} = \dr^{in}(x)\,,
\end{equation}
where $\dr^{in}$ and ${\tilde\dr}^{in}$ satisfy the constraint and compatibility condition:
\begin{equation}\label{Inital-Data-Compatablity}
  |\dr^{in}|=1\,,\quad {\tilde\dr}^{in}\cdot \dr^{in}=0\,.
\end{equation}

 Let us remark that a particularly important special case of the parabolic-hyperbolic system of Ericksen-Leslie's model is that the term $\div \tilde{\sigma}$ vanishes. Namely, the coefficients $\mu_i's$, $(1\leq i \leq 6, i \neq 4)$ of $\div \tilde{\sigma}$ are chosen as 0, which immediately implies $\lambda_1 = \lambda_2 = 0$. Consequently, the system \eqref{Parabolic-Hyperbolic-Liquid-Crystal-Model} reduces to a model which is Navier-Stokes equations coupled with a wave map from $\mathbb{R}^n$ to $\mathbb{S}^2$:
\begin{align}\label{Hyperbolic-Liquid-Crystal-Model}
  \left\{ \begin{array}{c}
    \partial_t \v + \v \cdot \nabla \v + \nabla p = \frac{1}{2}\mu_4 \Delta \v - \div (\nabla \dr \odot \nabla \dr )\, , \\
    \div \v =0\, ,\\
    \rho_1\ddot{\dr} = \Delta \dr + (-\rho_1 |\dot{\dr}|^2+|\nabla \dr|^2) \dr\, .
  \end{array}\right.
\end{align}

\subsection{$\rho_1=0, \lambda_1=-1$, parabolic model}

When the coefficients $\rho_1=0$ and $\lambda_1=-1$ in the third equation of \eqref{Parabolic-Hyperbolic-Liquid-Crystal-Model}, the system reduces to the parabolic type equations, which are also called Ericksen-Leslie's system in the literatures.  The static analogue of the parabolic Ericksen-Leslie's system is the so-called Oseen-Frank model, whose mathematical study was initialed from Hardt-Kinderlehrer-Lin \cite{Hardt-Kinderlehrer-Lin-CMP1986}. Since then there have been many works in this direction. In particular, the existence and regularity or partial regularity of the approximation (usually Ginzburg-Landau approximation as in \cite{Lin-Liu-CPAM1995}) dynamical Ericksen-Leslie's system was started by the work of Lin and Liu in \cite{Lin-Liu-CPAM1995}, \cite{Lin-Liu-DCDS1996} and \cite{Lin-Liu-ARMA2000}.

For the simplest system preserving the basic energy law
\begin{align}\label{simplified-model}
  \left\{ \begin{array}{c}
    \partial_t \v + \v \cdot \nabla \v + \nabla p =  \Delta \v - \div (\nabla \dr \odot \nabla \dr )\, , \\
    \div \v =0\, ,\\
    \partial_t \dr + \v\cdot \nabla \dr = \Delta \dr + |\nabla \dr|^2 \dr\,,\quad |\dr|=1\,,
  \end{array}\right.
\end{align}
which can be obtained by neglecting the Leslie stress and specifying some elastic constants. In 2-D case, global weak solutions with at most a finite number of singular times was proved by Lin-Lin-Wang \cite{Lin-Lin-Wang-ARMA2010}. The uniqueness of weak solutions was later on justified by Lin-Wang \cite{Lin-Wang-CAMS2010} and Xu-Zhang \cite{Xu-Zhang-JDE2012}. Recently, Lin and Wang proved global existence of weak solution for 3-D case in \cite{Lin-Wang-CPAM2016}.

For the more general parabolic Ericksen-Leslie's system,  local well-posedness is proved by Wang-Zhang-Zhang in \cite{Wang-Zhang-Zhang-ARMA2013}, and in \cite{Huang-Lin-Wang-CMP2014} regularity and existence of global solutions in $\mathbb{R}^2$ was established by Huang-Lin-Wang. The existence and uniqueness of weak solutions, also in $\mathbb{R}^2$ was proved by Hong-Xin and Li-Titi-Xin in \cite{Hong-Xin-2012} \cite{Li-Titi-Xin} respectively. Similar result was also obtained by Wang-Wang in \cite{Wang-Wang-2014}. For more complete review of the works for the parabolic Ericksen-Leslie's system, please see the reference listed above.

\subsection{$\rho_1 >0$, parabolic-hyperbolic model}
If $\rho_1>0$,  \eqref{Parabolic-Hyperbolic-Liquid-Crystal-Model} is a parabolic-hyperbolic system for which there is very few works comparing the corresponding parabolic model. The only notable exception might be for the most simplified model, say, in \eqref{Hyperbolic-Liquid-Crystal-Model}, taking $\v=\mathbf{0}$, the spacial dimension is $1$. For this case, the system \eqref{Hyperbolic-Liquid-Crystal-Model} can be reduced to a so-called nonlinear variational wave equation. Zhang and Zheng (later on with Bressan and others) studied systematically the dissipative and energy conservative solutions in series work starting from late 90's \cite{Zhang-Zheng-AA1998, Zhang-Zheng-ActaC1999, Zhang-Zheng-ARMA2000, Zhang-Zheng-CAMS2001, Zhang-Zheng-CPDE2001, Zhang-Zheng-PRSE2002, Zhang-Zheng-ARMA2003, Zhang-Zheng-AIPA2005, Bressan-Zhang-Zheng-ARMA2007, Zhang-Zheng-ARMA2010, Zhang-Zheng-CPAM2012, Chen-Zhang-Zheng-ARMA2013}.

For the multidimensional case, to our best acknowledgement, there was no mathematical work on the original parabolic-hyperbolic Ericksen-Leslie's system \eqref{Parabolic-Hyperbolic-Liquid-Crystal-Model}. Very recently, De Anna and Zarnescu \cite{DeAnna-Zarnescu-2016} considered the inertial Qian-Sheng model of liquid crystals which couples a hyperbolic type equation involving a second order derivative with a forced incompressible Navier-Stokes equations. It is a system describing the hydrodynamics of nematic liquid crystals in the Q-tensor framework. They proved global well-posedness and twist-wave solutions. Furthermore, for the inviscid version of the Qian-Sheng model, in \cite{FRSZ-2016}, Feireisl-Rocca-Schimperna-Zarnescu proved a global existence of the {\em dissipative solution} which is inspired from that of incompressible Euler equation defined by P-L. Lions \cite{Lions-1996}.

It is well-known that the geometric constraint $|\dr|=1$ brings difficulties (particularly in higher order nonlinearities) on the Ericksen-Leslie's system \eqref{Parabolic-Hyperbolic-Liquid-Crystal-Model}, even in the parabolic case $\rho_1=0$. In \cite{Wang-Zhang-Zhang-ARMA2013}, Wang-Zhang-Zhang treated the parabolic case to remove this constraint by introducing a new formulation of parabolic version of \eqref{Parabolic-Hyperbolic-Liquid-Crystal-Model}. The key feature of their formulation is projecting the equation of $\dr$ (the third equation of \eqref{Parabolic-Hyperbolic-Liquid-Crystal-Model} with $\rho_1=0$) the space orthogonal to the direction $\mathrm{d}$. However, for the genuine parabolic-hyperbolic system \eqref{Parabolic-Hyperbolic-Liquid-Crystal-Model} with $\rho_1 > 0$, this technique seems not work.  Indeed, projecting the third equation of the system \eqref{Parabolic-Hyperbolic-Liquid-Crystal-Model} into the {\em orthogonal} direction of $\dr$, we have:
\begin{equation}\label{Othogonal-Direction-Model}
  \begin{aligned}
    \left\{ \begin{array}{c}
      \partial_t \v + \v \cdot \nabla \v - \frac{1}{2} \mu_4 \Delta \v + \nabla p = - \div (\nabla \dr \odot \nabla \dr) + \div \tilde{\sigma}\, , \\
      \div \v = 0\, ,\\
     \rho_1 (\mathrm{I} - \dr \dr) \cdot \ddot{\dr} =  \lambda_1 (\dot{ \dr}- \B \dr) + (\mathrm{I} - \dr \dr) \cdot ( \Delta \dr  + \lambda_2 \A \dr)\, ,
    \end{array}\right.
  \end{aligned}
\end{equation}
where $\mathrm{I}$ denotes the $n \times n$ identical matrix. If taking $\rho_1=0$ and $\lambda_1=-1$, \eqref{Othogonal-Direction-Model} is exactly what was employed in \cite{Wang-Zhang-Zhang-ARMA2013}. But for the case $\rho_1 > 0$, the third equation in \eqref{Othogonal-Direction-Model} include only $(\mathrm{I} - \dr \dr) \cdot \ddot{\dr}$, while the {\em parallel} part of the second derivative term, i.e. $\dr\dr\cdot\ddot{\dr}$ is not included so it will have trouble on the energy estimate. In \cite{Wang-Zhang-Zhang-ARMA2013}, this is not a problem since $\rho_1=0$, the leading order term $\dot{ \dr}- \B \dr$ is automatically orthogonal to $\mathrm{d}$.

Our approach is projecting the third equation of \eqref{Parabolic-Hyperbolic-Liquid-Crystal-Model} to the direction parallel to $\mathrm{d}$:
$$ \rho_1 \dr \dr \cdot \ddot{\dr} = \gamma \dr + \dr \dr \cdot ( \Delta \dr  + \lambda_2 \A \dr)\,, $$
from which we can determine the Lagrangian multiplier $\gamma = - \rho_1 |\dot{\dr}|^2 + |\nabla \dr|^2 - \lambda_2 \dr^\top \A \dr$. Now, the key point is: with $\gamma$ given in this form,  if the initial data $\dr^{in},\ {\tilde\dr}^{in}$ satisfy $|\dr^{in}|=1$ and the compatibility condition $ {\tilde\dr}^{in} \cdot \dr^{in} = 0$, then for the {\em solution} to \eqref{Parabolic-Hyperbolic-Liquid-Crystal-Model}, the constraint $|\dr|=1$ will be {\em forced} to hold. Hence, these constraints need only be given on the initial data, while in the system \eqref{Parabolic-Hyperbolic-Liquid-Crystal-Model}, we do not need the constraint $|\dr| =1$ explicitly any more. These facts will be stated and proved in Section 2.

We remark that spiritually this is similar to \cite{Liu-Liu-Pego-CPAM2007} in which they used an unconstrained formulation of the Navier-Stokes equations, namely, the incompressibility is not required in the equations, but only imposed on the initial data.

\subsection{Main results.}

In this paper, we establish the following three well-posedness results: first, if the viscosity $\mu_4$ is large, namely, $\beta \equiv \mu_4 - 4 \mu_6  > 0$, and the initial energy $E^{in}$ is small enough, then the life span of the Ericksen-Leslie's parabolic-hyperbolic liquid crystal system \eqref{Parabolic-Hyperbolic-Liquid-Crystal-Model}-\eqref{Inital-Data} is at least $-O(\ln E^{in})$. Second, for the special case $\mu_1 = \mu_2 = \mu_3 = \mu_5 = \mu_6 = 0$, same existence and life span of the system \eqref{Hyperbolic-Liquid-Crystal-Model}-\eqref{Inital-Data} hold, but without the smallness restriction on the initial energy. Third, if the viscosity $\mu_4$ is even larger, namely, $\alpha \equiv \mu_4 - 4 \mu_6 - \tfrac{ (|\lambda_1| - 7 \lambda_2)^2 }{\eta} - \tfrac{ 2 ( 7 |\lambda_1| - 2\lambda_2 )^2 }{ |\lambda_1| } > 0 $ and furthermore, a damping condition is satisfied, i.e. $\mu_2 < \mu_3$ (i.e. $\lambda_1 < 0$) on the model \eqref{Parabolic-Hyperbolic-Liquid-Crystal-Model}, where $\eta = \frac{1}{2} \min \big{\{} 1 , \frac{1}{\rho_1} , \frac{ |\lambda_1|}{\rho_1} \big{\}} \in (0,\frac{1}{2} ]$, we can prove the existence of a unique global smooth solution to the parabolic-hyperbolic liquid crystal system \eqref{Parabolic-Hyperbolic-Liquid-Crystal-Model}-\eqref{Inital-Data} with small initial data. More precisely, the main results are stated in the following theorem:

\begin{theorem}\label{Main-Thm}
 Let integer $s > \frac{n}{2}+2 $,  and the initial data $\v^{in},\ {\tilde\dr}^{in} \in H^s(\R^n),\ \nabla \dr^{in} \in H^{s}(\R^n)$, $|\dr^{in}| = 1$, $ {\tilde\dr}^{in} \cdot \dr^{in} = 0$. The initial energy is defined as $E^{in} \equiv |\v^{in}|^2_{H^s} + \rho_1 |{\tilde\dr}^{in}|^2_{H^s} + |\nabla \dr^{in}|^2_{H^s}$. Then the following statements hold:

 (I). If  $\beta \equiv \mu_4 - 4 \mu_6  > 0$, and the initial energy $E^{in}$ is small enough, namely,
 $$ E^{in} < \eps_0 \equiv \min \Big{\{} 1, \tfrac{\rho_1 \beta^2}{ [ 96 C ( \sqrt{\rho_1} ( \mu_1 + \mu_6 ) +  |\lambda_1| - \lambda_2  ) ]^2 }  , \tfrac{\rho^2_1 \beta^4}{ [ 96 C ( \sqrt{\rho_1} ( \mu_1 + \mu_6 ) +  |\lambda_1| - \lambda_2  ) ]^4 }  \Big{\}} \, ,$$
  where $C = C (n,s) > 0$ is a constant which will be determined in Lemma \ref{Unif-Bnd-Lemma}, then, there exists a unique solution $\v \in L^\infty(0,T;H^s(\R^n)) \cap L^2(0,T;H^{s+1}(\R^n))$, $\nabla \dr \in L^\infty(0,T;H^{s}(\R^n))$ and $\dot{\dr} \in L^\infty(0,T;H^s(\R^n))$ to the system \eqref{Parabolic-Hyperbolic-Liquid-Crystal-Model}-\eqref{Inital-Data}, where $ 0 <  T \leq \frac{1}{48 C_1} \ln \frac{1}{ E^{in}  } $.

  Moreover, the solution $(\v, \dr)$ satisfies the energy bound
  $$ \left( |\dr - \dr^{in}|^2_{L^2} + |\v|^2_{H^s} + \rho_1 |\dot{\dr}|^2_{H^2} + |\nabla \dr|^2_{H^s} \right)(t) + \frac{1}{4} \beta \int_0^t |\nabla \v|^2_{H^s} (\tau) \d \tau \leq E^{in} + 12 C_1 T \sqrt{E^{in}} $$
  for all $0 \leq t \leq T$, where $C_1 =   C \Big{[}   1 + \tfrac{ 1 + 2 ( |\lambda_1| - \lambda_2 ) }{\sqrt{\rho_1}} + \frac{1}{(\sqrt{\rho_1})^3} +  \tfrac{(\sqrt{\rho_1}-\lambda_2)^2 + ( |\lambda_1| - \lambda_2 )^2 }{\rho_1 \beta}  \Big{]} > 0$.

   (II). If $\mu_1 = \mu_2 = \mu_3 = \mu_5 = \mu_6 = 0$ and the initial energy $E^{in} < \infty$, then there is a unique solution $\v \in L^\infty(0,T;H^s(\R^n)) \cap L^2(0,T;H^{s+1}(\R^n))$, $\nabla \dr \in L^\infty(0,T;H^{s}(\R^n))$ and $\dot{\dr} \in L^\infty(0,T;H^s(\R^n))$ to the hyperbolic-type system \eqref{Hyperbolic-Liquid-Crystal-Model}-\eqref{Inital-Data}, where $ 0 < T < \frac{1}{4 C_2} \ln \frac{(E^{in} + 1)^2}{ E^{in} ( E^{in} + 2 ) } $.

   Moreover, the solution $(\v, \dr)$ satisfies
  \begin{equation}\no
  \left( |\dr - \dr^{in}|^2_{L^2} + |\v|^2_{H^s} + \rho_1 |\dot{\dr}|^2_{H^s} + |\nabla \dr|^2_{H^s} \right)(t) + \frac{1}{2}\mu_4 \int_0^t |\nabla  \v|^2_{H^s} (\tau) \d \tau \leq \widetilde{C}_2 (E^{in},T)
\end{equation}
  for all $0 \leq t \leq T$, where $C_2 = C ( 1 + \tfrac{1}{\mu_4} + |\nabla \dr^{in}|_{H^s} ) > 0$, $\widetilde{C}_2 (E^{in},T) = E^{in} + 2 C_2 T \prod\limits_{i=0}^2 \Big{[} \big{(} 1 - Y(E^{in}) e^{4 C_2 T}\big{)}^{-\frac{1}{2}} + i - 1 \Big{]} > 0$, $Y(E^{in}) = \tfrac{E^{in} (E^{in} + 2)}{(E^{in} + 1)^2} \in (0,1)$ and $C = C (n,s) > 0$ is determined in Lemma \ref{Unif-Bnd-Lemma}.

   (III). Assume that $\alpha \equiv \mu_4 - 4 \mu_6 - \tfrac{ (|\lambda_1| - 7 \lambda_2)^2 }{\eta} - \tfrac{ 2 ( 7 |\lambda_1| - 2\lambda_2 )^2 }{ |\lambda_1| } > 0 $ and $\mu_2 < \mu_3$ (i.e. $\lambda_1 < 0$), where  $\eta = \frac{1}{2} \min \big{\{} 1 , \frac{1}{\rho_1} , \frac{ |\lambda_1|}{\rho_1} \big{\}} \in (0,\frac{1}{2} ]$. If the initial data satisfy $E^{in}  \leq \eps_1 \equiv \frac{1}{|\lambda_1| + 2} \min \big{ \{ } \frac{1}{2} \eps_0 , \frac{\theta^2}{ (8 C_3)^2 } \big{ \} }$, where $ C_3 = 4 C' \big{(} 1 + \frac{1}{\sqrt{\rho_1}} \big{)}  \big{(}1+\mu_1 + |\lambda_1| - \lambda_2 + \mu_6 - \rho_1 \lambda_2 + \rho_1 + \frac{1}{\sqrt{\rho_1}} \big{)} > 0 $, $\theta = \min \big{\{} \alpha, \eta , \frac{1}{2} |\lambda_1| \big{\}} > 0$ and the constant $C' = C' (n,s) > 0$, then there exists a unique global solution $\v \in L^\infty(0,\infty;H^s(\R^n)) \cap L^2(0,\infty;H^{s+1}(\R^n))$, $\nabla \dr \in L^\infty(0,\infty;H^{s}(\R^n))$ and $\dot{\dr} \in L^\infty(0,\infty;H^s(\R^n))$ to the parabolic-hyperbolic system \eqref{Parabolic-Hyperbolic-Liquid-Crystal-Model}-\eqref{Inital-Data}. Moreover, the solution $(\v, \dr)$ satisfies
  \begin{align}
  \no \sup_{ t \geq 0} \left( |\v|^2_{H^s} + \rho_1 |\dot{\dr}|^2_{H^s} + |\nabla \dr|^2_{H^s} \right) + \int_0^\infty |\nabla \v|^2_{H^s} \d t
 \leq 2(|\lambda_1| + 2) E^{in} \, .
 \end{align}
\end{theorem}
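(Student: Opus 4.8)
The plan is to derive all three statements from one a priori energy estimate, combined with a standard construction-plus-uniqueness argument. First I would invoke the reformulation of Section~2: because $\gamma$ is taken in the form \eqref{Lagrange-Multiplier}, the constraint $|\dr|=1$ and the orthogonality $\dot\dr\cdot\dr=0$ propagate from the compatible data \eqref{Inital-Data-Compatablity}, so I may work with the unconstrained system \eqref{Parabolic-Hyperbolic-Liquid-Crystal-Model} and use $|\dr|=1$, $\dr\cdot\dot\dr=0$, $\dr\cdot\partial_j\dr=0$ only as pointwise identities on solutions. The object to control is the energy
\begin{equation}\no
  \mathcal{E}(t)=|\dr-\dr^{in}|^2_{L^2}+|\v|^2_{H^s}+\rho_1|\dot\dr|^2_{H^s}+|\nabla\dr|^2_{H^s},
\end{equation}
with its dissipation $\int_0^t|\nabla\v|^2_{H^s}$ (plus, in case (III), a damping of $|\dot\dr|^2_{H^s}$). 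To obtain the estimate I would apply $\partial^\alpha$ for $|\alpha|\le s$, pair the velocity equation with $\partial^\alpha\v$ (the $-\tfrac12\mu_4\Delta\v$ term yielding $\tfrac12\mu_4|\nabla\v|^2_{H^s}$), pair the director equation with $\partial^\alpha\dot\dr$, and add the elastic identity from testing $\partial^\alpha\Delta\dr$ against $\partial^\alpha\dot\dr$ so that $\tfrac12\tfrac{d}{dt}|\nabla\dr|^2_{H^s}$ appears. When the three are summed, the dangerous top-order couplings cancel: the Ericksen stress $-\div(\nabla\dr\odot\nabla\dr)$ tested against $\partial^\alpha\v$ combines with the convection $\v\cdot\nabla\dr$ in the director equation (the classical Lin--Liu cancellation), and the highest-order part of $\gamma\dr$ tested against $\partial^\alpha\dot\dr$ is annihilated by $\dr\cdot\dot\dr=0$ and $\dr\cdot\partial_j\dr=0$; this is precisely the interplay between the Lagrangian multiplier and $|\dr|=1$ highlighted in the abstract.

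After these cancellations the remaining terms are of two kinds. First, commutator and product terms of Kato--Ponce--Moser type, bounded by the constant $C(n,s)$ of Lemma~\ref{Unif-Bnd-Lemma} times the energy times $|\nabla\v|_{H^s}$ or $|\dot\dr|_{H^s}$. Second, the genuinely new Leslie terms $\div\tilde\sigma$ tested against $\partial^\alpha\v$, which after integration by parts become $\tilde\sigma:\partial^\alpha\nabla\v$, bilinear in $(\nabla\v,\dot\dr,\nabla\dr)$ and modulated by $\dr$; these I would split by Young's inequality, absorbing part into $\tfrac12\mu_4|\nabla\v|^2_{H^s}$ and the rest into the energy (in case (III), into the director damping). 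The outcome is a differential inequality of the schematic form $\tfrac{d}{dt}\mathcal{E}+c_0\mathcal{D}\le C(\mathcal{E}^{1/2}+\mathcal{E})\mathcal{D}+C\mathcal{E}^{3/2}$, with $\mathcal{D}$ the available dissipation and $C$ degenerating exactly as the coefficient combinations $\beta$, resp.\ $\alpha$, shrink. The three parts differ only in how this is closed.

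In case (I) there is no director damping, so $\mathcal{D}=|\nabla\v|^2_{H^s}$ only; the smallness $E^{in}<\eps_0$ keeps $\mathcal{E}$ below the threshold where the $\sqrt{\rho_1}(\mu_1+\mu_6)$- and $(|\lambda_1|-\lambda_2)$-contributions are dominated by $\tfrac14\beta$, and a continuity argument then gives $\mathcal{E}(t)\le E^{in}+12C_1T\sqrt{E^{in}}$, which stays below $\eps_0$ precisely for $T\lesssim\ln(1/E^{in})$ since $\sqrt{E^{in}}\ln(1/E^{in})\to0$ --- the claimed logarithmic life span. In case (II) the Leslie stress vanishes and the director equation is the forced wave map; the structure improves the right-hand side to a genuinely super-linear $C_2\mathcal{E}^{3/2}$ (with $|\nabla\dr^{in}|_{H^s}$ absorbed into $C_2$), and comparison with the explicit Riccati-type ODE producing the stated factor $(1-Y(E^{in})e^{4C_2T})^{-1/2}$ gives existence with no smallness, at the price of a life span shrinking to $0$ as $E^{in}\to\infty$. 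In case (III) the sign condition $\lambda_1<0$ promotes $\mathcal{D}$ to $|\nabla\v|^2_{H^s}+|\lambda_1||\dot\dr|^2_{H^s}$ and, through the equation, a control of $|\nabla\dr|_{H^s}$; completing the squares among the $\mu_4|\nabla\v|^2$, $\lambda_2\A\dr$, $\lambda_1\B\dr$ contributions is exactly what extracts the corrected viscosity $\alpha$, and $\alpha>0$ makes $\mathcal{D}\gtrsim\theta(|\nabla\v|^2_{H^s}+|\dot\dr|^2_{H^s}+|\nabla\dr|^2_{H^s})$ coercive for the whole energy; then for $\mathcal{E}\le\eps_1$ every nonlinear term is absorbed, $\tfrac{d}{dt}\mathcal{E}\le0$, and this bootstraps globally to $\sup_t(|\v|^2_{H^s}+\rho_1|\dot\dr|^2_{H^s}+|\nabla\dr|^2_{H^s})+\int_0^\infty|\nabla\v|^2_{H^s}\le2(|\lambda_1|+2)E^{in}$.

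With the a priori bounds in hand, existence would follow from a standard iteration --- at each step solve the linear Stokes problem for $\v$ and the linear damped inhomogeneous wave equation for $\dr$, show the iterates stay bounded in the energy space and contract in a weaker norm, and pass to the limit, recovering $|\dr|=1$ via Section~2 --- while uniqueness comes from the same energy method applied to the difference of two solutions, conceding one derivative (difference measured in $H^{s-1}$, or in $L^2$ for $\v,\dot\dr,\nabla\dr$, against the $H^s$ a priori control). The hard part, I expect, will be the top-order treatment of the Lagrange and coupling terms in the hyperbolic director equation, which enjoys no smoothing: the cubic term $\rho_1|\dot\dr|^2\dr$ and the terms $\rho_1(\dr^\top\A\dr)\dr$, $\lambda_2\A\dr$ must be controlled by combining the Moser estimates with the constraint identities so that no uncontrolled $|\nabla^{s+1}\dr|$ or $|\nabla^s\dot\dr|^2|\nabla^s\dr|$ factor survives; and, as a close second, the precise coefficient accounting in case (III) that must keep the dissipative quadratic form positive definite, which is what pins down the exact shapes of $\alpha$, $\theta$, $\eps_0$, $\eps_1$ and is easy to get wrong.
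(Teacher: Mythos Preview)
Your high-level plan (a priori energy estimate $+$ compactness/iteration $+$ constraint recovery via Section~2) matches the paper's architecture, and your identification of the Lin--Liu cancellation, the Kato--Ponce commutator structure, and the role of $\lambda_1<0$ for damping in (III) is correct. But there is one genuine gap and one methodological difference worth flagging.

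\textbf{The gap.} You propose to run the $H^s$ energy estimate using $|\dr|=1$, $\dr\cdot\dot\dr=0$, $\dr\cdot\partial_j\dr=0$ as pointwise identities, and then build solutions by Picard iteration on the linearized Stokes/wave pair. The difficulty is that neither the linear iterates $\dr^{(n)}$ nor any mollified approximation will satisfy $|\dr^{(n)}|=1$, so the constraint identities are unavailable precisely at the stage where you need uniform bounds to pass to the limit. The paper faces the same issue and resolves it differently: it runs the energy estimate (Lemma~\ref{Unif-Bnd-Lemma}) on a mollified system \emph{without} ever invoking $|\dr^\eps|=1$; to compensate for the loss of a pointwise bound on $\dr^\eps$, it enlarges the energy to include $|\dr^\eps-\mathcal J_\eps\dr^{in}|_{L^2}^2$ and recovers $|\dr^\eps|_{L^\infty}$ by Sobolev embedding (inequality \eqref{L^infty-Estimate}). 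This is why every term involving $|\dr^\eps|_{L^\infty}$ in the $\mu_i$- and $\gamma$-estimates is tracked explicitly and why the polynomials $\mathbf P$, $\mathbf Q$ depend on $|\nabla\dr^{in}|_{H^s}$. Only \emph{after} the limit does Lemma~\ref{Parallel-d-Lemma} restore $|\dr|=1$. Your claim that ``the highest-order part of $\gamma\dr$ tested against $\partial^\alpha\dot\dr$ is annihilated by $\dr\cdot\dot\dr=0$'' therefore does not help at the approximation level; those cubic terms ($R_3,R_4,R_5$ in the paper) must be estimated directly, and they close because they are genuinely subcritical, not because of a cancellation.

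\textbf{On Part (III).} Your description ``through the equation, a control of $|\nabla\dr|_{H^s}$'' is too vague. The paper's mechanism is specific: it tests the director equation with $\nabla^k\dr$ (not $\nabla^k\dot\dr$), producing the modified energy $\rho_1|\nabla^k(\dot\dr+\dr)|_{L^2}^2+(|\lambda_1|-\rho_1)|\nabla^k\dr|_{L^2}^2-\rho_1|\nabla^k\dot\dr|_{L^2}^2$ and the new dissipation $|\nabla^{k+1}\dr|_{L^2}^2$; this is then combined with the basic estimate via the weight $\eta$, and only here is $|\dr|=1$ used (on the already-constructed local solution from Part~(I)). The ``completing the squares'' you describe is done at this stage, yielding $\alpha$ after Young's inequality on the two bilinear terms $(|\lambda_1|-7\lambda_2)|\nabla\v|_{H^s}|\nabla\dr|_{\dot H^s}$ and $(7|\lambda_1|-2\lambda_2)|\nabla\v|_{H^s}|\dot\dr|_{H^s}$.
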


To prove the theorem, one of the key is that the constraint $|\dr|=1$ is not needed so that we can construct the approximate system. Next, we derive the energy estimate for the approximate system from which we use standard compactness method to prove the local existence of the smooth solution which automatically obey $|\dr|=1$. With further damping property and new estimate, we extend the local solution to global in time.

The organization of this paper is as follows: in the next section, we justify the relation between the Lagrangian multiplier $\gamma$ and the constraint $|\dr|=1$. In Section 3, we construct the approximate equation of \eqref{Parabolic-Hyperbolic-Liquid-Crystal-Model}. As mentioned above, in the approximate construction we do not need the constraint $|\dr|=1$. In section 4, we provide an apriori estimate of the approximate system. Then the solution to the approximate equation and their compactness will be proved in Section 5.

\section{Lagrangian multiplier $\gamma$ and constraint $|\dr|=1$}
In this section, we prove the following Lemma on the relation between the Lagrangian multiplier $\gamma$ and the geometric constraint $|\dr|=1$.

\begin{lemma}\label{Parallel-d-Lemma}
  Assume $(\v, \dr)$ is a classical solution to the Ericksen-Leslie's parabolic-hyperbolic system \eqref{Parabolic-Hyperbolic-Liquid-Crystal-Model}-\eqref{Inital-Data} satisfying $\v \in L^\infty(0,T;H^s(\R^n)) \cap L^2(0,T;H^{s+1}(\R^n))$, $\nabla \dr \in L^\infty(0,T;H^{s}(\R^n))$ and $\dot{\dr} \in L^\infty(0,T;H^s(\R^n))$ for some $T \in (0, \infty)$, where $s > \frac{n}{2} + 2$.

  If the constraint $|\dr|=1$ is required, then the Lagrangian multiplier $\gamma$ is
  \begin{equation}\label{Lagrange-Multiplier1}
    \gamma = - \rho_1 |\dot{\dr}|^2 + |\nabla \dr|^2 - \lambda_2 \dr^\top \A \dr \, .
  \end{equation}

  Conversely, if we give the form of $\gamma$ as \eqref{Lagrange-Multiplier1} and $\dr$ satisfies the initial data conditions ${\tilde\dr}^{in} \cdot \dr^{in} = 0$, $|\dr^{in}|=1$ and $|\dr|_{L^\infty([0,T] \times \R^n)} < \infty$, then $ |\dr| = 1 \, . $
\end{lemma}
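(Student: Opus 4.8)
The plan is to establish the two implications separately; the first is a short direct computation, and the converse --- the assertion that prescribing $\gamma$ by \eqref{Lagrange-Multiplier1} \emph{forces} $|\dr|=1$ --- is the substantive one. \emph{(Forward direction.)} Assume $|\dr|=1$. Differentiating the constraint along particle paths, $(\partial_t+\v\cdot\nabla)|\dr|^2=0$ gives $\dr\cdot\dot\dr=0$, and applying $(\partial_t+\v\cdot\nabla)$ once more gives $\dr\cdot\ddot\dr=-|\dot\dr|^2$; similarly $\Delta|\dr|^2=0$ gives $\dr\cdot\Delta\dr=-|\nabla\dr|^2$, and $\dr\cdot\B\dr=0$ since $\B$ is skew-symmetric. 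Taking the inner product of the third equation of \eqref{Parabolic-Hyperbolic-Liquid-Crystal-Model} with $\dr$ and inserting these identities isolates $\gamma$ (using $\gamma\,\dr\cdot\dr=\gamma$), which is exactly \eqref{Lagrange-Multiplier1}. All manipulations are legitimate because $s>\tfrac n2+2$ makes $\dr$, $\nabla\dr$, $\dot\dr$ and $\nabla\v$ bounded on $[0,T]\times\R^n$.

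\emph{(Converse, Step 1: the equation for $|\dr|^2-1$.)} Now I take \eqref{Lagrange-Multiplier1} as the definition of $\gamma$ and set $\phi:=|\dr|^2-1$, without assuming the constraint. Taking the inner product of the third equation of \eqref{Parabolic-Hyperbolic-Liquid-Crystal-Model} with $\dr$ and using $\dr\cdot\dot\dr=\tfrac12\dot\phi$, $\dr\cdot\ddot\dr=\tfrac12\ddot\phi-|\dot\dr|^2$, $\dr\cdot\Delta\dr=\tfrac12\Delta\phi-|\nabla\dr|^2$ and $\dr\cdot\B\dr=0$, a direct computation shows that the terms not proportional to $\phi$ cancel \emph{precisely} because of the prescribed form of $\gamma$, leaving the linear homogeneous equation
\[
  \rho_1\ddot\phi=\Delta\phi+\lambda_1\dot\phi+2\gamma\phi\qquad\text{on }[0,T]\times\R^n,
\]
where $\ddot\phi=(\partial_t+\v\cdot\nabla)^2\phi$ and $\gamma=\gamma(\v,\dr,\dot\dr)$ is now a bounded, known coefficient. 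The conditions \eqref{Inital-Data-Compatablity} translate into vanishing Cauchy data: $|\dr^{in}|=1$ (identically in $x$) gives $\phi|_{t=0}=0$ and $\nabla\phi|_{t=0}=0$, and $\tilde\dr^{in}\cdot\dr^{in}=0$ gives $\dot\phi|_{t=0}=2\dr^{in}\cdot\tilde\dr^{in}=0$.

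\emph{(Converse, Step 2: energy estimate.)} To conclude $\phi\equiv0$ I would run a Gronwall argument on $\mathcal E(t):=|\phi|^2_{L^2}+|\nabla\phi|^2_{L^2}+\rho_1|\dot\phi|^2_{L^2}$. First one checks $\mathcal E(t)<\infty$: $\nabla\phi=2(\nabla\dr)^\top\dr$ and $\dot\phi=2\dr\cdot\dot\dr$ belong to $L^\infty(0,T;L^2)$ since $\dr\in L^\infty$ and $\nabla\dr,\dot\dr\in L^\infty(0,T;L^2)$, while $\phi=(\dr-\dr^{in})\cdot(\dr+\dr^{in})$ with $\dr+\dr^{in}\in L^\infty$ --- this is where the hypothesis $|\dr|_{L^\infty([0,T]\times\R^n)}<\infty$ is used --- and $\dr-\dr^{in}=\int_0^t\partial_t\dr\,\d\tau\in L^\infty(0,T;L^2)$ because $\partial_t\dr=\dot\dr-\v\cdot\nabla\dr\in L^\infty(0,T;L^2)$. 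Multiplying the $\phi$-equation by $\dot\phi$, integrating over $\R^n$, and using $\div\v=0$ to convert the material derivatives into $\tfrac12\tfrac{\d}{\d t}$ of $L^2$-norms up to commutators controlled by $|\nabla\v|_{L^\infty}|\nabla\phi|^2_{L^2}$ --- together with $|\lambda_1|\,|\dot\phi|^2_{L^2}$ and $|\gamma|_{L^\infty}(|\phi|^2_{L^2}+|\dot\phi|^2_{L^2})$ for the remaining terms --- yields $\tfrac{\d}{\d t}\mathcal E(t)\le C(t)\,\mathcal E(t)$, where $C(t)$ depends only on $|\nabla\v|_{L^\infty}$, $|\nabla\dr|^2_{L^\infty}$, $|\dot\dr|^2_{L^\infty}$, $\rho_1$, $\lambda_1$, $\lambda_2$, hence $C\in L^\infty(0,T)$. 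Since $\mathcal E(0)=0$, Gronwall forces $\mathcal E\equiv0$, so $\phi\equiv0$ and $|\dr|\equiv1$.

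\emph{(Main obstacle.)} The one genuinely conceptual step is Step 1: recognizing that the prescribed $\gamma$ is exactly what makes $|\dr|^2-1$ solve a linear homogeneous damped-wave equation with vanishing data; once that is in hand everything else is soft. The technical care to watch for is (i) confirming that $\mathcal E(t)$ is finite \emph{before} invoking Gronwall --- precisely the role of the $L^\infty$ bound on $\dr$ --- and (ii) tracking the commutator terms produced by moving $(\partial_t+\v\cdot\nabla)$ through $\nabla$ and under the spatial integral, all of which are absorbed into $|\nabla\v|_{L^\infty}\mathcal E$. I do not expect either to cause real difficulty.
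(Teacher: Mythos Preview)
Your proposal is correct and follows the same overall strategy as the paper: derive the linear homogeneous damped-wave equation for $\phi=|\dr|^2-1$ with vanishing Cauchy data, then conclude $\phi\equiv 0$ by an $L^2$-energy estimate and Gronwall. The only difference is in the execution of the energy step: the paper passes to Lagrangian coordinates along the flow of $\v$, setting $\psi(t,x)=\phi(t,X(t,x))$ so that the material derivatives become pure $\partial_t$'s, and then runs the estimate on $\psi$; you stay in Eulerian coordinates and absorb the commutator $[\nabla,\v\cdot\nabla]\phi$ into $|\nabla\v|_{L^\infty}|\nabla\phi|^2_{L^2}$. Your route is slightly more direct and sidesteps the question of how $\Delta$ transforms under the flow map; the paper's route keeps the time-derivative structure cleaner. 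Both rely on the same regularity to place $\gamma$ in $L^1_tL^\infty_x$ (you actually get $L^\infty_tL^\infty_x$), and both need the hypothesis $|\dr|_{L^\infty([0,T]\times\R^n)}<\infty$ to ensure $\phi\in L^\infty_tL^2_x$ before invoking Gronwall.
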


\begin{proof}
 If $|\dr| = 1$, we multiply $\dr$ in the third equation of the system \eqref{Parabolic-Hyperbolic-Liquid-Crystal-Model} and then we get
  \begin{align}\label{Lagrange-Multiplier-Calculate}
    \no \gamma |\dr|^2 =& \rho_1 \ddot{\dr} \cdot \dr - \Delta \dr \cdot \dr - \lambda_1 \dot{\dr} \cdot \dr + \lambda_1 \dr^\top \B \dr - \lambda_2 \dr^\top \A \dr \\
    \no =& \rho_1 (\partial_t + \v \cdot \nabla) \dot{\dr} \cdot \dr - \div \nabla \dr \cdot \dr - \lambda_1 (\partial_t + \v \cdot \nabla) \dr \cdot \dr - \lambda_2 \dr^\top \A \dr \\
    \no =& \rho_1 (\partial_t + \v \cdot \nabla) (\dot{\dr} \cdot \dr) - \rho_1 \dot{\dr} \cdot (\partial_t + \v \cdot \nabla) \dr - \div (\nabla \dr \cdot \dr) + |\nabla \dr|^2 \\
    \no & - \lambda_1 (\partial_t + \v \cdot \nabla) (\tfrac{1}{2} |\dr|^2) - \lambda_2 \dr^\top \A \dr \\
    \no =& \rho_1 (\partial_t + \v \cdot \nabla)^2 (\tfrac{1}{2} |\dr|^2) - \lambda_1 (\partial_t + \v \cdot \nabla) (\tfrac{1}{2} |\dr|^2) - \Delta (\tfrac{1}{2} |\dr|^2) \\
    \no &- \rho_1 |\dot{\dr}|^2 + |\nabla \dr|^2 - \lambda_2 \dr^\top \A \dr \, .
  \end{align}
  Since $|\dr| =1$, the above equation reduces to
$$ \gamma = - \rho_1 |\dot{\dr}|^2 + |\nabla \dr|^2 - \lambda_2 \dr^\top \A \dr \, . $$

Conversely, if we give $ \gamma = - \rho_1 |\dot{\dr}|^2 + |\nabla \dr|^2 - \lambda_2 \dr^\top \A \dr $, from the above calculation and the initial conditions we have
\begin{equation}\label{|d|-Quasilinear-Hyperbolic-Equation}
\begin{aligned}
  \left\{ \begin{array}{l}
    \rho_1 (\partial_t + \v \cdot \nabla)^2 (  |\dr|^2 - 1 ) - \lambda_1 (\partial_t + \v \cdot \nabla) ( |\dr|^2 - 1 )  - \Delta ( |\dr|^2 - 1 ) = 2 \gamma (|\dr|^2 - 1) \, , \\
     (\partial_t + \v \cdot \nabla) (|\dr|^2-1) \big{|}_{t=0} = 2 {\tilde\dr}^{in} \cdot \dr^{in} = 0 \, ,\\
    (|\dr|^2 - 1 ) \big{|}_{t=0} = |\dr^{in}|^2 - 1 = 0 \, .
  \end{array} \right.
\end{aligned}
\end{equation}

Let $h= |\dr|^2 - 1$. Then $h$ solves the following Cauchy problem for a given smooth $\v$:
\begin{equation}\label{h-Quasilinear-Hyperbolic-Equation}
\begin{aligned}
  \left\{ \begin{array}{l}
    \rho_1 \ddot{h} - \lambda_1 \dot{h}  - \Delta h = 2 \gamma h \, , \\
     \dot{h}|_{t=0} =  0 \, ,\\
    h|_{t=0} = 0 \, .
  \end{array} \right.
\end{aligned}
\end{equation}

For a given smooth divergence-free $\v$ we consider the following flow
\begin{align}
  \no \left\{ \begin{array}{l}
    \partial_t X (t,x) = \v (t, X(t,x)) \, , \\
    X(0,x) = x \, . \\
  \end{array} \right.
\end{align}
Then by substituting it into the equation \eqref{h-Quasilinear-Hyperbolic-Equation} we obtain
\begin{align}
  \no \left\{ \begin{array}{c}
    \rho_1 \partial_t^2 h (t, X(t,x)) - \lambda_1 \partial_t h(t, X(t,x)) - \Delta h(t, X(t,x)) = 2 \gamma (t, X(t,x)) h(t, X(t,x)) \, , \\
    \partial_t h(t, X(t,x)) |_{t=0} = 0 \, ,\\
    h(t, X(t,x)) |_{t=0} = 0 \, . \\
  \end{array}\right.
\end{align}
By letting $\psi (t,x) = h(t, X(t,x))$, it can be rewritten as
\begin{align}\label{psi-hyperbolic}
   \left\{ \begin{array}{c}
    \rho_1 \partial_t^2 \psi - \lambda_1 \partial_t \psi - \Delta \psi = 2 \gamma (t, X(t,x)) \psi \, , \\
    \partial_t \psi |_{t=0} = 0 \, ,\\
    \psi |_{t=0} = 0 \, . \\
  \end{array}\right.
\end{align}

 Our goal is to verify $\psi(t,x) = 0$ for all times $t$. Noticing that $|\dr|_{L^\infty([0,T] \times \R^n)} < \infty$ and $\v \in L^\infty(0,T;H^s(\R^n)) \cap L^2(0,T;H^{s+1}(\R^n))$ and $\dot{\dr}\, , \nabla \dr \in L^\infty(0,T;H^{s}(\R^n))$ for $s > \frac{n}{2} + 2$, we deduce that by Sobolve embedding $$|\gamma|_{L^1(0,T,L^\infty(\R^n))} < \infty \, .$$

  We denote by $Z(t,x) \equiv \lambda_1 \partial_t \psi (t,x) + 2 \gamma (t, X(t,x)) \psi(t,x)$. Multiplying by $\partial_t \psi$ in the equation \eqref{psi-hyperbolic} and integrating by parts over $\R^n$, we have
  \begin{align}
    \no \frac{1}{2} \partial_t ( \rho_1 |\partial_t \psi|_{L^2}^2 + |\nabla \psi|^2_{L^2} ) =&  \left\l Z , \partial_t \psi \right\r \\
    \no \leq& |Z|_{L^2} |\partial_t \psi|_{L^2} \\
    \no \leq& \frac{1}{\sqrt{\rho_1}} |Z|_{L^2} \sqrt{ \rho_1 |\partial_t \psi|_{L^2}^2 + |\nabla \psi|^2_{L^2} }\, ,
  \end{align}
  which implies that
  $$ \partial_t \sqrt{ \rho_1 |\partial_t \psi|_{L^2}^2 + |\nabla \psi|^2_{L^2} } \leq \frac{1}{\sqrt{\rho_1}} |Z|_{L^2}\, . $$
Then by integrating on $[0,t]$ we have
\begin{equation}\label{G-1}
  \sqrt{ \rho_1 |\partial_t \psi(t, \cdot)|_{L^2}^2 + |\nabla \psi (t, \cdot)|^2_{L^2} } \leq \sqrt{ \rho_1 |\partial_t \psi(0, \cdot)|_{L^2}^2 + |\nabla \psi (0, \cdot)|^2_{L^2} } + \frac{1}{\sqrt{\rho_1}} \int_0^t |Z(\tau, \cdot)|_{L^2} \d \tau \, .
\end{equation}

Let $G(t) \equiv \sqrt{ \rho_1 |\partial_t \psi(t, \cdot)|_{L^2}^2 + |\nabla \psi (t, \cdot)|^2_{L^2} }$. One notices that
\begin{align}\label{G-2}
  \no |Z(t, \cdot)|_{L^2} \leq& \Big{(} \tfrac{|\lambda_1|}{\sqrt{\rho_1}} + 2 |\gamma(t, \cdot)|_{L^\infty} \Big{)} \Big{(} \sqrt{\rho_1} |\partial_t \psi(t, \cdot)|_{L^2} + |\psi (t, \cdot)|_{L^2} \Big{)} \\
  \leq& \Big{(} \tfrac{|\lambda_1|}{\sqrt{\rho_1}} + 2 |\gamma(t, \cdot)|_{L^\infty} \Big{)} G(t) \, ,
\end{align}
and
\begin{align}\label{G-3}
  \no |\psi(t, \cdot)|_{L^2} \leq & |\psi(0, \cdot)|_{L^s} + \int_0^t |\partial_t \psi(\tau, \cdot)|_{L^2} \d \tau \\
  \leq& |\psi(0, \cdot)|_{L^2} + \frac{1}{\sqrt{\rho_1}} \int_0^t G(\tau) \d \tau \, .
\end{align}
According  to the inequalities \eqref{G-1}, \eqref{G-2} and \eqref{G-3}, we observe that for all $0 \leq t \leq T$
$$ G(t) \leq G(0) + \int_0^t R(\tau) G(\tau) \d \tau \, , $$
where $R(t) = \frac{1+|\lambda_1|}{\sqrt{\rho_1}} + 2 |\gamma(t, \cdot)|_{L^\infty} \in L^1 ([0,T])$. Then it is derived from Gronwall inequality and the fact $G(0) = 0$ that
$$ 0 \leq G(t) \leq G(0) \exp \left( \int_0^t R(\tau) \d \tau \right) = 0  $$
holds for all $t \in [0,T]$. Consequently, $\psi(t,x) = 0$ holds for all times $t$ and then the proof of Lemma \ref{Parallel-d-Lemma} is finished.

\end{proof}

We remark that if let $\rho_1=0$ in \eqref{|d|-Quasilinear-Hyperbolic-Equation} (in this case, the Lagrangian multiplier $\gamma$ will make the corresponding change), and the initial data $|\dr^{in}|=1$, then we can also prove at later time $|\dr|=1$. This is the case considered in \cite{Wang-Zhang-Zhang-ARMA2013}, although it is not treated in this way there.

\section{Approximate system}

\subsection{The approximate system of \eqref{Parabolic-Hyperbolic-Liquid-Crystal-Model}.} We first construct the approximate system of \eqref{Parabolic-Hyperbolic-Liquid-Crystal-Model}:
\begin{align}\label{Approximated-System}
  \displaystyle \left\{ \begin{array}{l}
    \frac{\d}{\d t} ( \v^\epsilon, \dot{\dr}^\epsilon, \dr^\epsilon )^\top = \mathcal{F}_\epsilon (\v^\epsilon, \dot{\dr}^\epsilon, \dr^\epsilon)\, , \\
    \displaystyle (  \v^\epsilon, \dot{\dr}^\epsilon, \dr^\epsilon )^\top \big{|}_{t=0} = ( \J_\eps \v^{ in}, \J_\eps {\tilde\dr}^{in}, \J_\eps \dr^{ in} )^\top \, .
    \end{array}\right.
\end{align}
where
\begin{equation}\nonumber
\begin{aligned}
    \mathcal{F}_\epsilon(\v^\epsilon, \dot{\dr}^\epsilon, \dr^\epsilon) =& ( F_\epsilon(\v^\epsilon, \dot{\dr}^\epsilon, \dr^\epsilon), \tfrac{1}{\rho_1} G_\epsilon(\v^\epsilon, \dot{\dr}^\epsilon, \dr^\epsilon),H_\epsilon(\v^\epsilon, \dot{\dr}^\epsilon, \dr^\epsilon) )^\top \\
    F_\epsilon(\v^\epsilon, \dot{\dr}^\epsilon, \dr^\epsilon) =&\tfrac{1}{2}\mu_4 \J_\eps \Delta \v^\epsilon -\mathcal{P} \J_\eps [\J_\eps \v^\epsilon \cdot \nabla \J_\eps \v^\epsilon] \\
      &- \mathcal{P} \J_\eps \div ( \nabla \J_\eps \dr^\epsilon \odot  \nabla \J_\eps \dr^\epsilon) + \mathcal{P} \J_\eps \div\tilde{\sigma}( \J_\eps \v^\eps, \J_\eps \dr^\eps, \J_\eps \dot{\dr}^\eps ) \, ,\\
    G_\epsilon(\v^\epsilon, \dot{\dr}^\epsilon, \dr^\epsilon) =& \J_\eps \Delta \dr^\epsilon - \rho_1 \J_\eps [\J_\eps \v^\epsilon \cdot \nabla \J_\eps \dot{\dr}^\epsilon]  + \J_\eps ( \gamma ( \J_\eps \v^\eps, \J_\eps \dr^\eps, \J_\eps \dot{\dr}^\eps ) \J_\eps \dr^\epsilon ) \\
     &+ \lambda_1 (\J_\eps \dot{\dr}^\epsilon - \J_\eps ( \J_\eps \B_\epsilon \J_\eps \dr^\epsilon) ) + \lambda_2 \J_\eps ( \J_\eps \A_\epsilon \J_\eps \dr^\epsilon) \, ,\\
    H_\epsilon(\v^\epsilon, \dot{\dr}^\epsilon, \dr^\epsilon) = &\dot{\dr}^\epsilon - \J_\eps [\J_\eps \v^\epsilon \cdot \nabla \J_\eps \dr^\epsilon]\, .
\end{aligned}
\end{equation}
Here the mollifier operator $\J_\eps$ is defined as $\J_\eps f = \mathcal{F}^{-1} ( {\bf 1}_{|\xi| \leq \frac{1}{\eps}} \mathcal{F} (f) )$ and $\mathcal{F}$ is the standard Fourier transform, $\mathcal{P}$ is the Leray projection, $\B_\eps = \frac{1}{2} (\nabla  \v^\eps - \nabla  \v^\eps{}^\top) $, $\A_\eps = \frac{1}{2} (\nabla  \v^\eps + \nabla \v^\eps{}^\top) $. Furthermore, we still use $\dot{\dr}^\eps$ denote the approximate material derivative of $\dr^\eps$:
\begin{equation}
  \dot{\dr}^\eps = \partial_t \dr^\eps + \J_\eps [\J_\eps \v^\epsilon \cdot \nabla \J_\eps \dr^\eps]\,,
\end{equation}
and $\tilde{\sigma}( \J_\eps \v^\eps, \J_\eps \dr^\eps, \J_\eps \dot{\dr}^\eps )$ the approximate stress tensor:
\begin{equation}
\begin{aligned}
  (\tilde{\sigma}( \J_\eps \v^\eps, \J_\eps \dr^\eps, \J_\eps \dot{\dr}^\eps ) )_{ji} =&  \mu_1 \J_\eps \dr^\eps_k \J_\eps \dr^\eps_p \J_\eps (\A_{kp})_\eps \J_\eps \dr_i^\eps \J_\eps \dr_j^\eps \\
  \no +& \mu_2 \J_\eps \dr_j^\eps ( \J_\eps \dot{\dr}_i^\eps + \J_\eps (\B_{ki})_\eps \J_\eps \dr_k^\eps )  +  \mu_3 \J_\eps \dr_i^\eps ( \J_\eps \dot{\dr}_j^\eps + \J_\eps (\B_{kj})_\eps \J_\eps \dr_k^\eps ) \\
  \no +& \mu_5 \J_\eps \dr_j^\eps \J_\eps \dr_k^\eps \J_\eps (\A_{ki})_\eps + \mu_6 \J_\eps \dr_i^\eps \J_\eps \dr_k^\eps \J_\eps (\A_{kj})_\eps\,,
\end{aligned}
\end{equation}
and $\gamma ( \J_\eps \v^\eps, \J_\eps \dr^\eps, \J_\eps \dot{\dr}^\eps )$ the approximate Lagrangian multiplier:
\begin{equation}\label{approximate-gamma}
\gamma ( \J_\eps \v^\eps, \J_\eps \dr^\eps, \J_\eps \dot{\dr}^\eps ) = -\rho_1| \J_\eps \dot{\dr}^\eps|^2+|\nabla \J_\eps \dr^\eps|^2  - \lambda_2 (\J_\eps \dr^\eps){}^\top \J_\eps \A_\eps (\J_\eps \dr^\eps) \, .
\end{equation}

\subsection{The local existence of the approximate system \eqref{Approximated-System}.} The following standard technical lemma will be frequently used later in this paper.
\begin{lemma}\label{Basic-Properties}
  (1)The following calculus inequality
  $$ |\u \v|_{H^s} \leq |\u|_{L^\infty} |\v|_{H^s} + |\v|_{L^\infty} |\u|_{H^s} $$
  holds for all $\u,\ \v \in H^s \bigcap L^\infty$.

  (2)For any $\v \in H^s$,  $|\J_\eps \v - \v|_{H^s} \rightarrow 0$ as $\epsilon \rightarrow 0$, and the inequality $|\J_\eps \v - \v|_{H^{s-1}} \leq C \epsilon |\v|_{H^s}$ holds for some positive constant $C$.

  (3)For any $\v \in H^s$ and $k \in \mathbb{Z}^+ \bigcup\{0\}$, the inequalities hold:
  $$ |\J_\eps \v|_{H^{s+k}} \leq \frac{C(s,k)}{\epsilon^k} |\v|_{H^s} $$
  and
  $$ |\J_\eps \nabla^k \v|_{L^\infty} \leq \frac{C(k)}{\epsilon^{\frac{n}{2}+k}} |\v|_{L^2} $$
\end{lemma}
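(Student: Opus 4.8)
All three items are classical facts about products in Sobolev spaces and about low-frequency Fourier cut-offs, so my plan is to recall the standard arguments rather than develop anything new. For (1) I would use the Leibniz rule together with Gagliardo--Nirenberg interpolation (the Moser / Kato--Ponce argument); for (2) and (3) I would argue directly on the Fourier side, exploiting the definition $\J_\eps f=\mathcal{F}^{-1}\big(\mathbf{1}_{\{|\xi|\le1/\eps\}}\,\mathcal{F}f\big)$ and Plancherel's identity.

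For (1), the first step is to write $|\u\v|_{H^s}^2=\sum_{|\alpha|\le s}|\partial^\alpha(\u\v)|_{L^2}^2$ and expand each term by the Leibniz rule $\partial^\alpha(\u\v)=\sum_{\beta\le\alpha}\binom{\alpha}{\beta}\,\partial^\beta\u\,\partial^{\alpha-\beta}\v$. For a summand with $|\alpha|=m\le s$ and $1\le|\beta|=j\le m-1$, I would apply H\"older's inequality with the conjugate exponents $\tfrac{2m}{j}$ and $\tfrac{2m}{m-j}$, then the Gagliardo--Nirenberg inequalities $|\partial^\beta\u|_{L^{2m/j}}\lesssim|\u|_{L^\infty}^{1-j/m}|D^m\u|_{L^2}^{j/m}$ and $|\partial^{\alpha-\beta}\v|_{L^{2m/(m-j)}}\lesssim|\v|_{L^\infty}^{j/m}|D^m\v|_{L^2}^{1-j/m}$, and finally Young's inequality, which bounds the summand by a constant multiple of $|\u|_{L^\infty}|\v|_{H^s}+|\v|_{L^\infty}|\u|_{H^s}$; the endpoint cases $j\in\{0,m\}$ are bounded directly by $|\u|_{L^\infty}|\partial^\alpha\v|_{L^2}$ and $|\partial^\alpha\u|_{L^2}|\v|_{L^\infty}$ respectively. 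Summing over the finitely many multi-indices $\alpha,\beta$ yields the estimate, with a dimensional constant $C(n,s)$ understood (or absorbed into the chosen normalization of $|\cdot|_{H^s}$).

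For (2), Plancherel gives $|\J_\eps\v-\v|_{H^s}^2=\int_{|\xi|>1/\eps}(1+|\xi|^2)^s|\widehat\v(\xi)|^2\,\d\xi$, which tends to $0$ as $\eps\to0$ by dominated convergence since $(1+|\xi|^2)^s|\widehat\v|^2\in L^1(\R^n)$; and since $(1+|\xi|^2)^{-1}<|\xi|^{-2}<\eps^2$ on $\{|\xi|>1/\eps\}$, the same computation gives $|\J_\eps\v-\v|_{H^{s-1}}^2\le\eps^2|\v|_{H^s}^2$. For (3), since $\widehat{\J_\eps\v}$ is supported in $\{|\xi|\le1/\eps\}$ one has $|\J_\eps\v|_{H^{s+k}}^2=\int_{|\xi|\le1/\eps}(1+|\xi|^2)^{s+k}|\widehat\v|^2\,\d\xi\le(1+\eps^{-2})^k|\v|_{H^s}^2\le C(s,k)\,\eps^{-2k}|\v|_{H^s}^2$, and, using $\mathcal{F}^{-1}(L^1)\hookrightarrow L^\infty$ together with Cauchy--Schwarz and Plancherel, $|\J_\eps\nabla^k\v|_{L^\infty}\le C\int_{|\xi|\le1/\eps}|\xi|^k|\widehat\v(\xi)|\,\d\xi\le C\Big(\int_{|\xi|\le1/\eps}|\xi|^{2k}\,\d\xi\Big)^{1/2}|\v|_{L^2}=C(n,k)\,\eps^{-(n/2+k)}|\v|_{L^2}$, the last step because $\int_{|\xi|\le1/\eps}|\xi|^{2k}\,\d\xi$ equals a dimensional constant times $\eps^{-(n+2k)}$. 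I do not expect any genuine obstacle here; the only place requiring a little care is the bookkeeping with the interpolation exponents in (1), which is exactly the classical Moser estimate.
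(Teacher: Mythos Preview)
Your proposal is correct and follows precisely the standard arguments one expects here. In fact, the paper does not prove this lemma at all: it simply states that ``these properties are standard and we omit the proof,'' referring the reader to Majda--Bertozzi's textbook; your sketch (Moser/Gagliardo--Nirenberg for the product estimate, and direct Plancherel-based computations for the mollifier estimates) is exactly the kind of argument such a reference contains.
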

\begin{proof}
  These properties are standard and we omit the proof. The details can be found in \cite{Majda-2002-BOOK} for instance.
\end{proof}

\begin{lemma}\label{Local-Existence-Approximated}
  Let $s > \frac{n}{2} + 2$. Then there exists a unique solution $( \v^\epsilon, \dot{\dr}^\epsilon, \dr^\epsilon )^\top \in C([0,T_\epsilon);H^s \times H^s \times H^s)$ to the system \eqref{Approximated-System} for the maximal $T_\epsilon > 0$.
\end{lemma}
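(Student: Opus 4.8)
The plan is to view \eqref{Approximated-System} as an (autonomous) ordinary differential equation
$$ \tfrac{\d}{\d t} U^\eps = \mathcal{F}_\eps(U^\eps)\,, \qquad U^\eps := (\v^\eps, \dot{\dr}^\eps, \dr^\eps)^\top\,, $$
in the Banach space $X := H^s(\R^n) \times H^s(\R^n) \times H^s(\R^n)$, and to invoke the Picard--Lindel\"of (Cauchy--Lipschitz) theorem for ODEs valued in a Banach space. Thus it suffices to prove that $\mathcal{F}_\eps : X \to X$ is well defined and locally Lipschitz, i.e.\ Lipschitz on every ball $B_M := \{\, U \in X : |U|_X \leq M \,\}$; the unique maximal solution and the blow-up alternative then come for free. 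Note also that the initial datum $(\J_\eps\v^{in}, \J_\eps{\tilde\dr}^{in}, \J_\eps\dr^{in})^\top$ lies in $X$, since $\v^{in},\ {\tilde\dr}^{in},\ \nabla\dr^{in}\in H^s$ and $\J_\eps$ is bounded on $H^s$.

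The mechanism making $\mathcal{F}_\eps$ a \emph{bounded} map on $X$ — in sharp contrast to the un-mollified system — is that every spatial derivative on the right-hand side of \eqref{Approximated-System} is shielded by a mollifier $\J_\eps$, which by part (3) of Lemma \ref{Basic-Properties} maps $H^s$ into $H^{s+k}$ with norm controlled by $C(s,k)\eps^{-k}$ and into $W^{k,\infty}$ with norm controlled by $C(k)\eps^{-n/2-k}$. Concretely: $|\J_\eps\Delta\v^\eps|_{H^s}\leq C\eps^{-2}|\v^\eps|_{H^s}$; each transport term $\J_\eps[\J_\eps\v^\eps\cdot\nabla\J_\eps\v^\eps]$, $\J_\eps[\J_\eps\v^\eps\cdot\nabla\J_\eps\dot{\dr}^\eps]$, $\J_\eps[\J_\eps\v^\eps\cdot\nabla\J_\eps\dr^\eps]$ is bounded in $H^s$ by the algebra inequality of part (1) of Lemma \ref{Basic-Properties} (legitimate since $s>\tfrac n2$) together with the smoothing bounds; the elastic term $\J_\eps\div(\nabla\J_\eps\dr^\eps\odot\nabla\J_\eps\dr^\eps)$, the Leslie stress term $\J_\eps\div\tilde\sigma(\J_\eps\v^\eps,\J_\eps\dr^\eps,\J_\eps\dot{\dr}^\eps)$, and the term $\J_\eps(\gamma(\J_\eps\v^\eps,\J_\eps\dr^\eps,\J_\eps\dot{\dr}^\eps)\,\J_\eps\dr^\eps)$ are finite sums of monomials in the quantities $\J_\eps\v^\eps$, $\J_\eps\nabla\v^\eps$, $\J_\eps\dr^\eps$, $\J_\eps\nabla\dr^\eps$, $\J_\eps\dot{\dr}^\eps$ (recall that $\gamma$ and $\tilde\sigma$ are \emph{polynomials} in their arguments, with no denominators), each of which is handled in $H^s$ by iterating parts (1) and (3); and the Leray projection $\mathcal{P}$ is bounded on $H^s$. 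Collecting these estimates gives $|\mathcal{F}_\eps(U)|_X \leq C_\eps\,(1+|U|_X)^3$ with $C_\eps = C_\eps(n,s,\rho_1,\mu_i,\lambda_i)$, so $\mathcal{F}_\eps$ indeed maps $X$ into $X$.

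For the Lipschitz estimate on $B_M$ one writes each term of $\mathcal{F}_\eps(U) - \mathcal{F}_\eps(\widetilde U)$ as a difference of products and telescopes, $a_1\cdots a_k - b_1\cdots b_k = \sum_{j} b_1\cdots b_{j-1}(a_j-b_j)a_{j+1}\cdots a_k$, then bounds each summand exactly as above; since all factors stay in a fixed ball, this yields $|\mathcal{F}_\eps(U)-\mathcal{F}_\eps(\widetilde U)|_X \leq L(\eps,M)\,|U-\widetilde U|_X$. Applying the Picard--Lindel\"of theorem in $X$ produces some $T_\eps>0$ and a unique $U^\eps\in C([0,T_\eps);X)$ solving \eqref{Approximated-System} with the prescribed initial data; choosing $T_\eps$ maximal and invoking the standard continuation criterion (if $T_\eps<\infty$ then $\limsup_{t\uparrow T_\eps}|U^\eps(t)|_X=\infty$) finishes the proof. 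One checks in passing that the component $\dot{\dr}^\eps$ of the solution coincides with $\partial_t\dr^\eps + \J_\eps[\J_\eps\v^\eps\cdot\nabla\J_\eps\dr^\eps]$, consistently with its name, because this is precisely the third scalar equation of the system.

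I do not expect a genuine obstacle here: this is the standard ODE-in-a-Banach-space scheme, and the mollifier converts all unbounded differential operators into bounded ones. The only point requiring mild care is the bookkeeping of the negative powers of $\eps$: since $\J_\eps\Delta$ and $\J_\eps\div$ really do cost two, respectively one, extra derivatives, the constants $C_\eps$ and $L(\eps,M)$ — and hence the guaranteed existence time $T_\eps$ — degenerate as $\eps\to 0$. This is harmless for the present lemma; recovering bounds uniform in $\eps$ is exactly the task of the a priori energy estimates in Section 4.
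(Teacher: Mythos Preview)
Your proposal is correct and follows the same approach as the paper: both verify that $\mathcal{F}_\eps$ is locally Lipschitz on $H^s\times H^s\times H^s$ (using the mollifier smoothing of Lemma \ref{Basic-Properties} and the algebra property) and then invoke the Picard--Lindel\"of theorem for ODEs in a Banach space. The only minor slip is the exponent in $|\mathcal{F}_\eps(U)|_X\leq C_\eps(1+|U|_X)^3$: the $\mu_1$-term in $\tilde\sigma$ is quintic in $(\dr,\nabla\v)$, so the correct power is $5$ rather than $3$, but this does not affect the argument.
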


\begin{proof}
We need only to verify the Lipschitz continuity of the functions $\mathcal{F}_\epsilon(\cdot,\cdot,\cdot)$ in the product space $H^s \times H^s \times H^s$. In general, we define the norm on a space $X_1 \times X_2 \times X_3$ as $ \left| (  a ,b ,c  )^\top \right|^2_{X_1 \times X_2 \times X_3} \equiv |a|^2_{X_1} + |b|^2_{X_2} + |c|^2_{X_3}\,$.

By Sobolev embedding and Lemma \ref{Basic-Properties}, for any $\v^1, \v^2, \dot{\dr}^1, \dot{\dr}^2, \dr^1, \dr^2 \in H^s$
  \begin{align}
    \no & \left| \J_\eps \div ( \nabla \J_\eps \dr^1 \odot \nabla \J_\eps \dr^1) - \J_\eps \div ( \nabla \J_\eps \dr^2 \odot \nabla \J_\eps \dr^2)  \right|_{H^s}\\
    \no \leq& \left| \Delta \J_\eps (\dr^1-\dr^2) \odot  \nabla \J_\eps \dr^1 \right|_{H^s} + \left|  \nabla \J_\eps (\dr^1-\dr^2) \odot \Delta \J_\eps \dr^1 \right|_{H^s} \\
    \no &+ \left| \Delta \J_\eps \dr^2 \odot \nabla \J_\eps (\dr^1-\dr^2) \right|_{H^s} + \left| \nabla \J_\eps \dr^2 \odot \Delta \J_\eps (\dr^1 - \dr^2) \right|_{H^s} \\
    \no \leq& C \left\{ \left| \nabla \J_\eps \dr^1 \right|_{L^\infty} \left| \J_\eps \Delta (\dr^1 - \dr^2) \right|_{H^s} + \left| \Delta \J_\eps (\dr^1 -\dr^2) \right|_{L^\infty} \left| \nabla \J_\eps \dr^1  \right|_{H^s} \right.\\
    \no &+ \left| \Delta \J_\eps \dr^1 \right|_{L^\infty} \left| \nabla \J_\eps (\dr^1 - \dr^2) \right|_{H^s} + \left| \nabla \J_\eps (\dr^1 -\dr^2) \right|_{L^\infty} \left| \Delta \J_\eps \dr^1  \right|_{H^s}\\
    \no &+ \left| \Delta \J_\eps \dr^2 \right|_{L^\infty} \left| \nabla \J_\eps (\dr^1 - \dr^2) \right|_{H^s} + \left| \nabla \J_\eps (\dr^1 -\dr^2) \right|_{L^\infty} \left| \Delta \J_\eps \dr^2  \right|_{H^s}\\
    \no &+ \left. \left| \nabla \J_\eps \dr^2 \right|_{L^\infty} \left| \J_\eps \Delta (\dr^1 - \dr^2) \right|_{H^s} + \left| \Delta \J_\eps (\dr^1 -\dr^2) \right|_{L^\infty} \left| \nabla \J_\eps \dr^2  \right|_{H^s} \right\}\\
    \no \leq& \frac{C}{\epsilon^{\frac{n}{2}}} \left| \nabla \dr^1 \right|_{L^2} \cdot \frac{1}{\epsilon} \left| \dr^1 - \dr^2 \right|_{\dot{H}^s} + \frac{C}{\epsilon^{s}} \left| \nabla \dr^1 \right|_{L^2} \cdot \frac{1}{\epsilon^{\frac{n}{2}+1}} \left| \nabla(\dr^1-\dr^2) \right|_{L^2} \\
    \no &+ \frac{C}{\epsilon^{\frac{n}{2}}+1} \left| \nabla \dr^1 \right|_{L^2} \cdot \frac{1}{\epsilon} \left| \dr^1 - \dr^2 \right|_{\dot{H}^s} + \frac{C}{\epsilon^{s+1}} \left| \nabla \dr^1 \right|_{L^2} \cdot \frac{1}{\epsilon^{\frac{n}{2}}} \left| \nabla(\dr^1-\dr^2) \right|_{L^2} \\
    \no &+ \frac{C}{\epsilon^{\frac{n}{2}+2}} \left| \nabla \dr^2 \right|_{L^2} \left| \dr^1 - \dr^2 \right|_{\dot{H}^s} + \frac{C}{\epsilon^{\frac{n}{2}+s+1}} \left| \nabla \dr^2 \right|_{L^2} \left|\nabla( \dr^1 - \dr^2 )\right|_{L^2} \\
    \no \leq& \frac{C}{\epsilon^{\frac{n}{2}+s + 2}} \left( \left|\nabla \dr^1 \right|_{L^2} + \left| \nabla \dr^2 \right|_{L^2} \right) \left| \dr^1 - \dr^2 \right|_{H^s} \, ,
  \end{align}
  and
  \begin{align}
    \no & \left| \mu_1 \J_\eps \partial_j [ \J_\eps \dr_k^1 \J_\eps \dr_p^1 \J_\eps (\A^1_{kp})_\epsilon \J_\eps \dr_i^1 \J_\eps \dr_j^1] - \mu_1 \J_\eps \partial_j [ \J_\eps \dr_k^2 \J_\eps \dr_p^2 \J_\eps (\A^2_{kp})_\epsilon \J_\eps \dr_i^2 \J_\eps \dr_j^2] \right|_{H^s} \\
    \no \leq& \mu_1 \left| \J_\eps \partial_j [ \J_\eps (\dr_k^1 - \dr_k^2) \J_\eps \dr_p^1 \J_\eps (\A^1_{kp})_\epsilon \J_\eps \dr_i^1 \J_\eps \dr_j^1 ] \right|_{H^s} \\
    \no +& \mu_1 \left| \J_\eps \partial_j [ \J_\eps \dr_k^1  \J_\eps ( \dr_p^1 - \dr_p^2 ) \J_\eps (\A^1_{kp})_\epsilon \J_\eps \dr_i^1 \J_\eps \dr_j^1 ] \right|_{H^s} \\
    \no +& \mu_1 \left| \J_\eps \partial_j [ \J_\eps \dr_k^1  \J_\eps \dr_p^1 \J_\eps (\A^1_{kp} - \A^2_{kp})_\epsilon \J_\eps \dr_i^1 \J_\eps \dr_j^1 ] \right|_{H^s} \\
    \no +& \mu_1 \left| \J_\eps \partial_j [ \J_\eps \dr_k^1  \J_\eps \dr_p^1 \J_\eps (\A^1_{kp})_\epsilon \J_\eps (\dr_i^1-\dr_i^2) \J_\eps \dr_j^1 ] \right|_{H^s} \\
    \no +& \mu_1 \left| \J_\eps \partial_j [ \J_\eps \dr_k^1  \J_\eps \dr_p^1 \J_\eps (\A^1_{kp})_\epsilon \J_\eps \dr_i^1 \J_\eps (\dr_j^1 -\dr_j^1) ] \right|_{H^s} \\
    \no \equiv& I_1 + I_2 + I_3 + I_4 + I_5\, ,
  \end{align}
  where
  \begin{align}
    \no I_1 \leq& \frac{C(s)\mu_1}{\epsilon} \left| \J_\eps (\dr_k^1 - \dr_k^2) \J_\eps \dr_p^1 \J_\eps (\A^1_{kp})_\epsilon \J_\eps \dr_i^1 \J_\eps \dr_j^1 \right|_{H^s}\\
    \no  \leq& \frac{C(s) \mu_1}{ \epsilon} \left| \J_\eps (\dr_k^1 - \dr_k^2)\right|_{H^s}  \left|\J_\eps \dr_p^1 \J_\eps (\A_{kp})_\epsilon \J_\eps \dr_i^1 \J_\eps \dr_j^1 \right|_{L^\infty} \\
    \no &+ \frac{C(s)\mu_1}{\epsilon} \left| \J_\eps (\dr_k^1 - \dr_k^2) \right|_{L^\infty} \left|\J_\eps \dr_p^1 \J_\eps (\A_{kp})_\epsilon \J_\eps \dr_i^1 \J_\eps \dr_j^1 \right|_{H^s} \\
    \no \leq& \frac{C(s)\mu_1}{\epsilon} \left| \dr^1 - \dr^2 \right|_{H^s} \cdot C \left|\nabla^2 \J_\eps \dr^1\right|^3_{L^2} \left| \J_\eps \nabla \v^1 \right|_{L^\infty} \\
    \no +& \frac{C(s)\mu_1}{\epsilon^{\frac{n}{2} +1}}  \left| \dr^1 - \dr^2 \right|_{L^2} \left( \left| \J_\eps \dr_p^1 \J_\eps \dr_i^1 \J_\eps \dr_j^1 \right|_{L^\infty} \left| \J_\eps (\A^1_{kp})_\epsilon \right|_{H^s} + \left| \J_\eps (\A^1_{kp})_\epsilon \right|_{L^\infty} \left| \J_\eps \dr_p^1 \J_\eps \dr_i^1 \J_\eps \dr_j^1 \right|_{H^s} \right) \\
    \no \leq& \frac{C(s)\mu_1}{\epsilon} \left| \dr^1 - \dr^2 \right|_{H^s}  \left|\nabla^2 \J_\eps \dr^1\right|^3_{L^2} \left| \J_\eps \nabla \v^1 \right|_{L^\infty} \\
    \no +& \frac{C(s)\mu_1}{\epsilon^{\frac{n}{2} +1}} \left| \dr^1 - \dr^2 \right|_{L^2} \left( \left| \nabla^2 \J_\eps \dr^1 \right|^3_{L^2} \left| \nabla \J_\eps \v^1 \right|_{H^s} + \left| \J_\eps \nabla \v^1 \right|_{L^\infty} \left| \J_\eps \dr^1 \right|^2_{L^\infty} \left| \J_\eps \dr^1 \right|_{H^s} \right) \\
    \no \leq& \frac{C(s) \mu_1}{\epsilon^{\frac{n}{2} + 5}} \left| \dr^1 - \dr^2 \right|_{H^s} \left| \nabla \dr^1 \right|_{L^2} \left| \v^1 \right|_{L^2} + \frac{C(s) \mu_1}{ \epsilon^{\frac{n}{2} + s + 5}} \left| \dr^1 - \dr^2 \right|_{L^2} \left| \nabla \dr^1 \right|^3_{L^2} \left| \v^1 \right|_{L^2} \\
    \no &+ \frac{C(s)\mu_1}{\epsilon^{n + s + 3}} \left| \dr^1 - \dr^2 \right|_{L^2} \left| \v^1 \right|_{L^2} \left| \nabla \dr^1 \right|^2_{L^2} \\
    \no \leq& C (s, \mu_1, \tfrac{1}{\epsilon}) \left( \left|\nabla \dr^1 \right|_{L^2} \left| \v^1 \right|_{L^2} + \left| \nabla \dr^1 \right|^3_{L^2} \left| \v^1 \right|_{L^2} + \left| \nabla \dr^1 \right|_{L^2}^2 \left| \v^1 \right|_{L^2} \right) \left| \dr^1 - \dr^2 \right|_{H^s} \, ,
  \end{align}
  and by similar estimating
  \begin{align}
    \no I_2 \leq& \frac{C(s) \mu_1}{\epsilon^{\frac{n}{2}+s + 5}} \left| \nabla \dr^1 \right|^2_{L^2} \left| \nabla \dr^2 \right|_{L^2} \left| \v^1 \right|_{L^2} \left| \dr^1 - \dr^2 \right|_{H^s}\, ,    \\
    \no I_3 \leq& \frac{C(s) \mu_1}{\epsilon^{\frac{n}{2}+s + 5}} \left| \nabla \dr^1 \right|^2_{L^2} \left| \nabla \dr^2 \right|^2_{L^2} \left| \v^1 - \v^2 \right|_{H^s} \, ,   \\
    \no I_4 \leq& \frac{C(s) \mu_1}{\epsilon^{\frac{n}{2}+s + 5}} \left| \nabla \dr^1 \right|_{L^2} \left| \nabla \dr^2 \right|^2_{L^2} \left| \v^2 \right|_{L^2} \left| \dr^1 - \dr^2 \right|_{H^s} \, ,    \\
    \no I_5 \leq& \frac{C(s) \mu_1}{\epsilon^{\frac{n}{2}+s + 5}} \left| \nabla \dr^2 \right|^3_{L^2} \left| \v^2 \right|_{L^2} \left| \dr^1 - \dr^2 \right|_{H^s} \, .
  \end{align}

  Furthermore, similar arguments and Lemma \ref{Basic-Properties} induce to the estimates for all $\v^1$, $\v^2$, $\dr^1$, $\dr^2$, $\dot{\dr}^1$, $\dot{\dr}^2 \in H^s$
  \begin{align}\label{Lipschitz-Bounds-F}
    \no &\left| F_\epsilon (\v^1, \dot{\dr}^1, \dr^1) - F_\epsilon (\v^2, \dot{\dr}^2, \dr^2) \right|_{H^s}\\
     \leq& C(\tfrac{1}{\epsilon}) f(|\v^1|_{L^2} +|\v^2|_{L^2}, |\nabla \dr^1|_{L^2} + |\nabla \dr^2|_{L^2}, |\dot{\dr}^1|_{L^2} + |\dot{\dr}^2|_{L^2} )\\
      \no &\qquad \qquad \times\left( |\v^1- \v^2|_{H^s} + |\dr^1 - \dr^2 |_{H^s} \right)\, ,
  \end{align}
  and
  \begin{equation}\label{Lipschitz-Bounds-G}
    \begin{aligned}
      &\left| G_\epsilon (\v^1, \dot{\dr}^1, \dr^1) - G_\epsilon (\v^2, \dot{\dr}^2, \dr^2) \right|_{H^s} \\
      \leq& C(\tfrac{1}{\epsilon}) g(|\v^1|_{L^2}+|\v^2|_{L^2}, |\dot{\dr}^1|_{L^2}+|\dot{\dr}^2|_{L^2},|\nabla \dr^1|_{L^2} + |\nabla \dr^2|_{L^2}) \\
      & \qquad \times \left( |\v^1- \v^2|_{H^s} + |\dot{\dr}^1 - \dot{\dr}^2|_{H^s} + |\dr^1 - \dr^2 |_{H^s} \right)
    \end{aligned}
  \end{equation}
  for some positive increasing functions $f(\cdot, \cdot, \cdot) $ and $g(\cdot, \cdot, \cdot) $ on their variables, and
  \begin{align}\label{Lipschitz-Bounds-H}
    \no &\left| H_\epsilon (\v^1, \dot{\dr}^1, \dr^1) - H_\epsilon (\v^2, \dot{\dr}^2, \dr^2) \right|_{H^s} \\
    \leq& \left|\dot{\dr}^1 \!-\! \dot{\dr}^2\right|_{H^s}\! +\! \frac{C}{\epsilon^{\frac{n}{2}+s+1}}\! \left( |\v^1|_{L^2}\! +\!|\v^2|_{L^2}\!+\! |\nabla \dr^1|_{L^2}\! +\! |\nabla \dr^2|_{L^2} \right)\! \left( |\v^1\!-\! \v^2|_{H^s}\! +\! |\dr^1 \!-\! \dr^2 |_{H^s} \right) \, .
  \end{align}

  Then combining the estimates \eqref{Lipschitz-Bounds-F}, \eqref{Lipschitz-Bounds-G} and \eqref{Lipschitz-Bounds-H} implies that for all $\v^i,\dot{\dr}^i, \dr^i \in H^s$, $i=1,2$
  \begin{equation}\label{Lipschitz-Bounds}
    \begin{aligned}
      & \left| \mathcal{F}_\epsilon(\v^1, \dot{\dr}^1, \dr^1) - \mathcal{F}_\epsilon(\v^2, \dot{\dr}^2, \dr^2) \right|_{H^s \times H^s \times H^s} \\
      \leq& C(\tfrac{1}{\epsilon}) \mathcal{I}(|\v^1|_{L^2}+|\v^2|_{L^2}, |\dot{\dr}^1|_{L^2}+|\dot{\dr}^2|_{L^2},|\nabla \dr^1|_{L^2} + |\nabla \dr^2|_{L^2}) \\
      & \qquad \times \left( |\v^1- \v^2|_{H^s} + |\dot{\dr}^1 - \dot{\dr}^2|_{H^s} + |\dr^1 - \dr^2 |_{H^s} \right)\, ,
    \end{aligned}
  \end{equation}
  where $\mathcal{I}(\cdot,\cdot,\cdot)$ is a positive increasing function on its variables. Hence $\mathcal{F}_\epsilon$ is locally Lipschitz on $ H^s \times H^s \times H^s$. Then the ODE theory implies that there exists a unique solution $( \v^\epsilon, \dot{\dr}^\epsilon, \dr^\epsilon )^\top \in C([0,T_\epsilon);H^s \times H^s \times H^s ) $ to the system \eqref{Approximated-System} on the maximal interval $[0,T_\eps)$ and then we finish the proof of Lemma \ref{Local-Existence-Approximated} .
\end{proof}

We emphasize that this proposition holds for general case $\mu_4 > 0,\ \mu_1, \mu_2, \mu_3, \mu_5, \mu_6 \geq 0$.

\begin{remark}
  As $\J_\eps^2 = \J_\eps\,$, we know that $(\J_\eps \v^\eps, \J_\eps \dr^\eps)$ is also a solution to the system \eqref{Approximated-System}. Then $(\v^\eps, \dr^\eps) = ( \J_\eps \v^\eps, \J_\eps \dr^\eps )$ and $\dot{\dr}^\eps = \J_\eps \dot{\dr}^\eps $. As a consequence, $(\v^\eps, \dr^\eps)$ solves the following system:
  \begin{equation}\label{Appr-Syst-Simple}
    \left\{ \begin{array}{c}
      \partial_t \v^\eps = - \J_\eps \mathcal{P} (\v^\eps \cdot \nabla \v^\eps) + \frac{1}{2} \mu_4 \Delta \v^\eps -\J_\eps \mathcal{P} \div (\nabla \dr^\eps \odot \nabla \dr^\eps) + \J_\eps \mathcal{P} \div \tilde{\sigma} (\v^\eps, \dr^\eps, \dot{\dr}^\eps) \, ,\\
      \div \v^\eps = 0\, ,\\
      \rho_1\partial_t \dot{\dr}^\eps = \!- \rho_1 \J_\eps (\v^\eps \! \cdot\! \nabla \dot{\dr}^\eps) \!+\! \Delta \dr^\eps\! +\! \J_\eps ( \gamma (\v^\eps, \dr^\eps, \dot{\dr}^\eps) \dr^\eps )\! +\! \lambda_1 ( \dot{\dr}^\eps \!-\! \J_\eps (\B_\eps \dr^\eps) ) \!+\! \lambda_2 \J_\eps (\A_\eps \dr^\eps) \\
      (  \v^\epsilon, \dot{\dr}^\epsilon, \dr^\epsilon )^\top \big{|}_{t=0} = ( \J_\eps \v^{ in}, \J_\eps {\tilde\dr}^{in}, \J_\eps \dr^{ in} )^\top \, ,
    \end{array}\right.
  \end{equation}
  where $\dot{\dr}^\eps = \partial_t \dr^\eps + \J_\eps (\v^\eps \cdot \nabla \dr^\eps)$, $ \gamma ( \v^\eps, \dr^\eps, \dot{\dr}^\eps ) = -\rho_1| \dot{\dr}^\eps|^2+|\nabla \dr^\eps|^2  - \lambda_2 ( \dr^\eps){}^\top \A_\eps ( \dr^\eps)\, , $
  and
  \begin{equation}\no
    (\tilde{\sigma}( \v^\eps, \dr^\eps, \dot{\dr}^\eps ) )_{ji} =  \mu_1 \dr^\eps_k \dr^\eps_p (\A_{kp})_\eps  \dr_i^\eps  \dr_j^\eps  + \mu_2  \dr_j^\eps (  \dot{\dr}_i^\eps +  (\B_{ki})_\eps  \dr_k^\eps )  + \mu_5  \dr_j^\eps  \dr_k^\eps  (\A_{ki})_\eps + \mu_6  \dr_i^\eps  \dr_k^\eps  (\A_{kj})_\eps\, .
  \end{equation}
\end{remark}

\section{Uniform Energy Estimate.}
In this section, we want to obtain apriori energy estimate of the system \eqref{Appr-Syst-Simple}. For all $s > \frac{n}{2} + 2$ we define energy functionals $ E_\eps(t) \equiv |\dr^\eps - \J_\eps \dr^{in}|^2_{L^2} + |\v^\eps|^2_{H^s} + \rho_1 |\dot{\dr}^\eps|^2_{H^s} + |\nabla \dr^\eps|^2_{H^s}\, , $ and $F_\eps(t) \equiv |\nabla \v^\eps|^2_{H^s} \, . $

For any integer $s \geq 1$ we define the homogeneous Sobolev spaces $\dot{H}^s(\R^n)$:
$$ f \in \dot{H}^s(\R^n) \Leftrightarrow |f|^2_{\dot{H}^s(\R^n)} \equiv \sum\limits_{1 \leq k \leq s} |\nabla^k f|^2_{L^2(\R^n)} < + \infty\, .$$
If $f \in \dot{H}^s(\R^n) \cap H^s(\R^n)$, then $|f|^2_{H^s(\R^n)} = |f|^2_{\dot{H}^s(\R^n)} + |f|^2_{L^2(\R^n)}\,,$ where $H^s(\R^n)$ is the inhomogeneous Sobolev spaces. Then we have the following lemma:
\begin{lemma}\label{Unif-Bnd-Lemma}
  Assume that $\beta = \mu_4 - 4 \mu_6 > 0$ and $(\v^\eps, \dr^\eps)$ is a sufficiently smooth solution to the approximate system \eqref{Appr-Syst-Simple}. Then there exists a constant  $C = C(n,s) > 0$ such that for all $t \in [0, T_\eps)$
  \begin{align}\label{Bounds-2}
   \frac{1}{2} \frac{\d}{\d t} E_\eps (t) + \frac{1}{4} \beta F_\eps(t) \leq {\bf P} (E_\eps (t)) + {\bf Q} (E_\eps(t)) F_\eps(t)\, ,
\end{align}
where
\begin{align}
   \no {\bf Q} (E_\eps(t)) = C ( \mu_1 + \mu_6 + \tfrac{ |\lambda_1| - \lambda_2 }{\sqrt{\rho_1}} ) ( 1 + |\nabla \dr^{in}|^2_{H^s} ) \Big{(} |\nabla \dr^{in}|_{H^s} + \sum_{i=1}^5 E_\eps^\frac{i}{2} (t) \Big{)}\, ,
\end{align}
\begin{align}
  \no {\bf P} (E_\eps(t)) = C_0 (\lambda_1, \lambda_2, \beta, \rho_1, |\nabla \dr^{in}|_{H^s}) E_\eps(t) [ E_\eps(t) + 1 ] [ E_\eps(t) + 2 ] \, ,
\end{align}
and the constant $C_0 (\lambda_1, \lambda_2, \beta, \rho_1, |\nabla \dr^{in}|_{H^s}) $ is
\begin{align}
  \no C_0 (\lambda_1, \lambda_2, \beta, \rho_1, |\nabla \dr^{in}|_{H^s}) =& C \Big{[} \tfrac{1 + |\lambda_1| - \lambda_2 + ( |\lambda_1| - \lambda_2 ) |\nabla \dr^{in}|^2_{H^s} } {\sqrt{\rho_1}} + |\nabla \dr^{in}|_{H^s} \\
   \no & + \tfrac{1}{(\sqrt{\rho_1})^{3}} + \tfrac{(\sqrt{\rho_1}-\lambda_2)^2 + ( |\lambda_1| - \lambda_2 )^2}{\rho_1 \beta}  \Big{]}\, .
\end{align}
\end{lemma}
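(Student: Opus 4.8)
The plan is to derive \eqref{Bounds-2} by performing $L^2$-based energy estimates on the three equations of the approximate system \eqref{Appr-Syst-Simple}, both at the $L^2$ level and at the $\dot H^s$ level (applying $\nabla^k$ for $1\le k\le s$), then summing. I would first record the basic cancellations: since $\mathcal P$ is the Leray projection and $\J_\eps^2=\J_\eps$, testing the $\v^\eps$-equation against $\v^\eps$ in $H^s$ produces the good term $\tfrac12\mu_4|\nabla\v^\eps|_{H^s}^2$, while $\div\v^\eps=0$ kills the transport term $\l\v^\eps\cdot\nabla\v^\eps,\v^\eps\r$ up to a commutator. Testing the $\dot\dr^\eps$-equation against $\dot\dr^\eps$ in $H^s$ (after dividing by nothing, keeping the $\rho_1$) pairs $\l\Delta\dr^\eps,\dot\dr^\eps\r$ against the term $\l\nabla\dr^\eps\odot\nabla\dr^\eps,\nabla\v^\eps\r$ coming from the $\v^\eps$-equation, so that these ``highest-order'' director-velocity interactions cancel, leaving $\tfrac{\d}{\d t}(\tfrac12\rho_1|\dot\dr^\eps|_{H^s}^2+\tfrac12|\nabla\dr^\eps|_{H^s}^2)$ on the left. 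The term $|\dr^\eps-\J_\eps\dr^{in}|_{L^2}^2$ is controlled by testing the $\dr^\eps$-equation $\partial_t\dr^\eps=\dot\dr^\eps-\J_\eps(\v^\eps\cdot\nabla\dr^\eps)$ against $\dr^\eps-\J_\eps\dr^{in}$, producing terms bounded by $E_\eps$ and $E_\eps F_\eps$-type quantities.

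Next I would treat the remaining ``bad'' terms one family at a time. The Lagrangian-multiplier term $\J_\eps(\gamma^\eps\dr^\eps)$ tested against $\dot\dr^\eps$ in $H^s$: here $\gamma^\eps=-\rho_1|\dot\dr^\eps|^2+|\nabla\dr^\eps|^2-\lambda_2(\dr^\eps)^\top\A_\eps\dr^\eps$, so using Lemma \ref{Basic-Properties}(1) and Sobolev embedding ($s>\tfrac n2+2$ gives $H^s\hookrightarrow W^{1,\infty}$ and control of $\dr^\eps$ in $L^\infty$ via $|\dr^\eps-\J_\eps\dr^{in}|$ plus $|\nabla\dr^\eps|$), one bounds $|\gamma^\eps\dr^\eps|_{H^s}$ by a polynomial in $|\dot\dr^\eps|_{H^s}$, $|\nabla\dr^\eps|_{H^s}$, $|\nabla\v^\eps|_{H^s}$ and $|\nabla\dr^{in}|_{H^s}$; the $\lambda_2\A_\eps$ piece is the only one containing $\nabla\v^\eps$ and it is exactly this piece that, after Young's inequality, gets split into a contribution to ${\bf Q}(E_\eps)F_\eps$ (absorbed later by the $\tfrac14\beta F_\eps$) and a contribution to ${\bf P}(E_\eps)$ with the coefficient $\tfrac{(\sqrt{\rho_1}-\lambda_2)^2+\cdots}{\rho_1\beta}$ visible in $C_0$. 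The antisymmetric terms $\lambda_1(\dot\dr^\eps-\J_\eps(\B_\eps\dr^\eps))$ and $\lambda_2\J_\eps(\A_\eps\dr^\eps)$ are estimated similarly; the $\lambda_1\dot\dr^\eps$ term is just a linear term giving $|\lambda_1|E_\eps/\sqrt{\rho_1}$, and the $\B_\eps\dr^\eps$, $\A_\eps\dr^\eps$ terms again carry $\nabla\v^\eps$, so they feed both ${\bf Q}$ and (via Young) ${\bf P}$, producing the $\tfrac{|\lambda_1|-\lambda_2}{\sqrt{\rho_1}}$ factors. The stress term $\J_\eps\mathcal P\div\tilde\sigma(\v^\eps,\dr^\eps,\dot\dr^\eps)$ tested against $\v^\eps$ in $H^s$: integrating the divergence by parts moves a derivative onto $\v^\eps$, giving $\l\tilde\sigma,\nabla\v^\eps\r$; each monomial of $\tilde\sigma$ (the $\mu_1,\mu_2,\mu_5,\mu_6$ terms — note $\mu_3$ does not appear here, consistent with the $\mu_6$ in $\beta$) is a product of $\dr^\eps$'s with one factor of $\nabla\v^\eps$ or $\dot\dr^\eps$, so by Lemma \ref{Basic-Properties}(1) one gets $|\tilde\sigma|_{H^s}$ bounded by $(\mu_1+\mu_6)(1+|\nabla\dr^\eps|_{H^s}^2)(\text{polynomial in }E_\eps^{1/2})\cdot(|\nabla\v^\eps|_{H^s}+\sqrt{\rho_1}|\dot\dr^\eps|_{H^s})$, which pairs against $|\nabla\v^\eps|_{H^s}$; the part quadratic in $\nabla\v^\eps$ goes into ${\bf Q}(E_\eps)F_\eps$ (matching the stated ${\bf Q}$), and the cross term with $\sqrt{\rho_1}|\dot\dr^\eps|_{H^s}$ is handled by Young, contributing to ${\bf P}$. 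Throughout, the key technical device is the product estimate in Lemma \ref{Basic-Properties}(1) together with the standard Moser/commutator estimates, plus the crucial observation (from Lemma \ref{Parallel-d-Lemma}) that $|\dr^\eps|$ stays bounded so that $L^\infty$ norms of $\dr^\eps$ are under control via $|\dr^\eps-\J_\eps\dr^{in}|_{L^2}$ and $|\nabla\dr^\eps|_{H^s}$.

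The main obstacle I anticipate is the bookkeeping of the highest-order terms so that the velocity-director cancellation is genuine at the $\dot H^s$ level: applying $\nabla^k$ to $\J_\eps\mathcal P\div(\nabla\dr^\eps\odot\nabla\dr^\eps)$ and to $\J_\eps\Delta\dr^\eps$ and pairing with $\nabla^k\v^\eps$, $\nabla^k\dot\dr^\eps$ respectively, one must check that the $\nabla^{k+1}\dr^\eps$-against-$\nabla^{k+1}\dr^\eps$ terms cancel after integration by parts and that the commutators $[\nabla^k,\J_\eps\v^\eps\cdot\nabla]$ and the lower-order remainders are all absorbable into ${\bf P}(E_\eps)$ and ${\bf Q}(E_\eps)F_\eps$. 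This requires carefully tracking which remainders contain a factor of $\nabla\v^\eps$ (these must go to the ${\bf Q}F_\eps$ bucket, because a priori they are only $O(E_\eps^{1/2})\cdot O(E_\eps^{1/2})\cdot F_\eps^{1/2}\cdot$(something), and Young against $\tfrac14\beta F_\eps$ would otherwise cost a negative power of $\beta$ that must be allowed to blow up only in the $C_0$ of ${\bf P}$) versus which are pure powers of $E_\eps$. The other delicate point is extracting precisely the coefficient structure of ${\bf Q}$ and $C_0$: one must be economical with Young's inequality (choosing the split parameters proportional to $\beta$, $\rho_1$, $\sqrt{\rho_1}$ appropriately) to land exactly on the stated $\tfrac{(\sqrt{\rho_1}-\lambda_2)^2+(|\lambda_1|-\lambda_2)^2}{\rho_1\beta}$ and $\tfrac{1+|\lambda_1|-\lambda_2+\cdots}{\sqrt{\rho_1}}$ factors; this is routine but error-prone, and I would organize it by first proving a ``crude'' version of \eqref{Bounds-2} with unspecified constants and then redoing the Young splits to optimize the dependence on the physical parameters.
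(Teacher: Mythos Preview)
Your plan is essentially the paper's own proof: $L^2$ and $\dot H^k$ ($1\le k\le s$) energy identities on the three equations, the cancellation between $\l\nabla^k\Delta\dr^\eps,\nabla^k(\v^\eps\cdot\nabla\dr^\eps)\r$ and the corresponding piece of $-\l\nabla^k\div(\nabla\dr^\eps\odot\nabla\dr^\eps),\nabla^k\v^\eps\r$, term-by-term estimates of the $\mu_i$-parts of $\tilde\sigma$ and of the $R_i$-terms in the $\dot\dr^\eps$-equation, and finally Young's inequality with parameter $\beta$ to split off ${\bf Q}(E_\eps)F_\eps$.

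One point needs correction. You write that the $L^\infty$ control on $\dr^\eps$ comes ``from Lemma \ref{Parallel-d-Lemma}''. It does not: Lemma \ref{Parallel-d-Lemma} concerns only the \emph{limit} system \eqref{Parabolic-Hyperbolic-Liquid-Crystal-Model}, and the approximate system \eqref{Appr-Syst-Simple} does \emph{not} preserve $|\dr^\eps|=1$ (the mollifiers $\J_\eps$ destroy the algebraic structure needed for that argument). The paper obtains $|\dr^\eps|_{L^\infty}$ purely by Sobolev embedding,
\[
|\dr^\eps|_{L^\infty}\le |\dr^\eps-\J_\eps\dr^{in}|_{L^\infty}+|\J_\eps\dr^{in}|_{L^\infty}
\le C|\dr^\eps-\J_\eps\dr^{in}|_{L^2}+C|\nabla\dr^\eps|_{H^s}+C|\nabla\dr^{in}|_{H^s}+1,
\]
i.e.\ $|\dr^\eps|_{L^\infty}\le C E_\eps^{1/2}(t)+C|\nabla\dr^{in}|_{H^s}+1$. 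This is precisely why $|\dr^\eps-\J_\eps\dr^{in}|_{L^2}^2$ is included in $E_\eps$ and why ${\bf P},{\bf Q}$ carry explicit $|\nabla\dr^{in}|_{H^s}$-dependence. You already name the correct ingredients in the same sentence, so the fix is just to drop the reference to Lemma \ref{Parallel-d-Lemma} here. (A minor aside: in the paper all of $\mu_2,\mu_3,\mu_5,\mu_6$ appear in the $\tilde\sigma$-estimates; they are then combined through the Parodi/Leslie relations $\mu_2+\mu_3=-\lambda_2$, $\mu_5+\mu_6\le 2\mu_6$ to produce the $\mu_6$ and $|\lambda_1|-\lambda_2$ factors you see in $\beta$ and in ${\bf Q}$.)
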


\begin{remark}\label{Remark-mu=0}
  If $\mu_1 = \mu_2 = \mu_3 = \mu_5 = \mu_6 = 0$, we have ${\bf Q}(E_\eps(t)) = 0$ and $\beta = \mu_4$. Thus
  \begin{align}\label{Approximated-Apriori-Estimates-Special}
   \frac{1}{2} \frac{\d}{\d t} E_\eps (t) + \frac{1}{4} \mu_4 F_\eps(t) \leq {\bf \widetilde{P}} (E_\eps (t)) \, ,
\end{align}
where
\begin{align}
  \no {\bf \widetilde{P}} (E_\eps(t)) = C \big{(} \tfrac{1}{\sqrt{\rho_1}} + \tfrac{1}{\mu_4} + |\nabla \dr^{in}|_{H^s} \big{)} E_\eps(t) [ E_\eps(t) + 1 ] [ E_\eps(t) + 2 ]\, .
\end{align}
\end{remark}

\begin{proof}
   \underline{$L^2$-Estimates.} By the relations $\partial_t \dr^\eps = \dot{\dr}^\eps - \J_\eps (\v^\eps \cdot \nabla \dr^\eps)$ and $\div \v^\eps = 0$ we know that
   \begin{align}\label{L2-Estimate-1}
     \no \frac{\d}{\d t} |\dr^\eps - \J_\eps \dr^{in}|^2_{L^2} =& 2 \left\l \partial_t \dr^\eps , \dr^\eps - \J_\eps \dr^{in} \right\r \\
     \no =& 2 \left\l \dot{\dr}^\eps - \J_\eps ( \v^\eps \cdot \nabla \dr^\eps ) , \dr^\eps - \J_\eps \dr^{in} \right\r \\
     =&  2 \left\l \dot{\dr}^\eps , \dr^\eps - \J_\eps \dr^{in} \right\r  - 2 \left\l \v^\eps \cdot \J_\eps \dr^{in} , \J_\eps (\dr^\eps - \J_\eps \dr^{in}) \right\r \\
     \no \leq& 2 |\dot{\dr}^\eps|_{L^2} |\dr^\eps - \J_\eps \dr^{in}|_{L^2} + 2 |\nabla \dr^{in}|_{L^\infty} |\v^\eps|_{L^2} |\dr^\eps - \J_\eps \dr^{in}|_{L^2} \\
     \no =& 2 ( |\dot{\dr}^\eps|_{L^2} + |\nabla \dr^{in}|_{L^\infty} |\v^\eps|_{L^2} ) |\dr^\eps - \J_\eps \dr^{in}|_{L^2} \\
     \no \leq& 2 C ( \tfrac{1}{\sqrt{\rho_1}} + |\nabla \dr^{in}|_{H^s} ) E_\eps(t) \, .
   \end{align}

     Multiplying $\dot{\dr}^\eps$ in the third equation of \eqref{Appr-Syst-Simple} and integrating on $\R^n$ by parts, one may obtain by the facts $( \J_\eps \v^\eps, \J_\eps \dr^\eps, \J_\eps \dot{\dr}^\eps ) = ( \v^\eps, \dr^\eps, \dot{\dr}^\eps)$ and $\div \v^\eps = 0$
     \begin{align}\label{L2-Estimate-2}
       \no &\frac{1}{2} \frac{\d}{\d t} \left( \rho_1 |\dot{\dr}^\eps|^2_{L^2} + |\nabla \dr^\eps|^2_{L^2} \right) - \lambda_1 |\dot{\dr}^\eps|^2_{L^2} \\
       \no =& \left\l \Delta \dr^\eps, \v^\eps \cdot \nabla \dr^\eps \right\r - \left\l \rho_1 |\dot{\dr}^\eps|^2 \dr^\eps , \dot{\dr}^\eps \right\r + \left\l |\nabla \dr^\eps|^2 \dr^\eps , \dot{\dr}^\eps \right\r \\
       \no -& \lambda_2 \left\l \dr^\eps{}^\top \A_\eps \dr^\eps, \dr^\eps \cdot \dot{\dr}^\eps \right\r - \lambda_1 \left\l \B_\eps \dr^\eps , \dot{\dr}^\eps \right\r + \lambda_2 \left\l \A_\eps \dr^\eps, \dot{\dr}^\eps \right\r \\
        \leq& \left\l \Delta \dr^\eps, \v^\eps \cdot \nabla \dr^\eps \right\r + |\dr^\eps|_{L^\infty} |\dot{\dr}^\eps|_{L^\infty} ( \rho_1 |\dot{\dr}^\eps|^2_{L^2} + |\nabla \dr^\eps|^2_{L^2} ) \\
       \no  +& |\lambda_2| |\dr^\eps|^3_{L^\infty} |\nabla \v^\eps|_{L^2} |\dot{\dr}^\eps|_{L^2} + ( |\lambda_1| + |\lambda_2| ) |\dr^\eps|_{L^\infty} |\nabla \v^\eps|_{L^2} |\dot{\dr}^\eps|_{L^2} \\
       \no \leq& \left\l \Delta \dr^\eps, \v^\eps \cdot \nabla \dr^\eps \right\r + |\dr^\eps|_{L^\infty} |\dot{\dr}^\eps|_{L^\infty} ( \rho_1 |\dot{\dr}^\eps|^2_{L^2} + |\nabla \dr^\eps|^2_{L^2} ) \\
       \no +& ( |\lambda_1| + |\lambda_2| ) ( |\dr^\eps|_{L^\infty} + |\dr^\eps|^3_{L^\infty} ) |\nabla \v^\eps|_{L^2} |\dot{\dr}^\eps|_{L^2} \, .
     \end{align}

     Multiplying by $\v^\eps$ in the first equation of \eqref{Appr-Syst-Simple} and integrating on $\R^n$ by parts, one can derive by the facts $( \J_\eps \v^\eps, \J_\eps \dr^\eps, \J_\eps \dot{\dr}^\eps ) = ( \v^\eps, \dr^\eps, \dot{\dr}^\eps)$ and $\div \v^\eps = 0$
     \begin{align}\label{L2-Estimate-3}
       \no & \frac{1}{2} \frac{\d}{\d t} |\v^\eps|^2_{L^2} + \frac{1}{2} \mu_4 |\nabla \v^\eps|^2_{L^2} \\
       \no =& - \left\l \partial_j ( \partial_i \dr^\eps_k \partial_j \dr^\eps_k ), \v^\eps_i \right\r - \left\l \tilde{\sigma} (\v^\eps, \dr^\eps, \dot{\dr}^\eps), \nabla \v^\eps \right\r \\
       =& - \left\l \partial_j  \partial_i \dr^\eps_k \partial_j \dr^\eps_k , \v^\eps_i \right\r  - \left\l  \partial_i \dr^\eps_k \partial_j \partial_j \dr^\eps_k , \v^\eps_i \right\r - \left\l \tilde{\sigma} (\v^\eps, \dr^\eps, \dot{\dr}^\eps), \nabla \v^\eps \right\r \\
       \no =& - \left\l \div ( \tfrac{1}{2} |\nabla \dr^\eps|^2 ), \v^\eps \right\r - \left\l \Delta \dr^\eps, \v^\eps \cdot \nabla \dr^\eps \right\r - \left\l \tilde{\sigma} (\v^\eps, \dr^\eps, \dot{\dr}^\eps), \nabla \v^\eps \right\r \\
       \no =& - \left\l \Delta \dr^\eps, \v^\eps \cdot \nabla \dr^\eps \right\r - \left\l \tilde{\sigma} (\v^\eps, \dr^\eps, \dot{\dr}^\eps), \nabla \v^\eps \right\r \, .
     \end{align}
     Now we estimate the term $- \left\l \tilde{\sigma} (\v^\eps, \dr^\eps, \dot{\dr}^\eps), \nabla \v^\eps \right\r $. It is calculated by integrating by parts, H\"older inequality and the fact $\dr^\eps{}^\top \B_\eps \dr^\eps =0 $
     \begin{equation}
       \no - \mu_1 \left\l (\dr^\eps{}^\top \A_\eps \dr^\eps) \dr^\eps \dr^\eps, \nabla \v^\eps \right\r = - \mu_1 |\dr^\eps{}^\top \nabla \v^\eps \dr^\eps|^2_{L^2} \, ,
     \end{equation}
     and
     \begin{align}
       \no  -& \mu_2 \left\l \dr^\eps_j ( \dot{\dr}^\eps_i - (\B_\eps)_{ki} \dr^\eps_k ), \partial_j \v^\eps_i \right\r - \mu_3 \left\l \dr^\eps_i ( \dot{\dr}^\eps_j - (\B_\eps)_{kj} \dr^\eps_k ), \partial_j \v^\eps_i \right\r \\
       \no \leq& (\mu_2 + \mu_3) |\dr^\eps|_{L^\infty} |\dot{\dr}^\eps|_{L^2} |\nabla \v^\eps|_{L^2} + (\mu_2 + \mu_3) |\dr^\eps|^2_{L^\infty} |\nabla \v^\eps|^2_{L^2} \, ,
     \end{align}
     and
     \begin{equation}
       \no - \mu_5 \left\l \dr^\eps_j \dr^\eps_k (\A_\eps)_{ki}, \partial_j \v^\eps_i \right\r - \mu_6 \left\l \dr^\eps_i \dr^\eps_k (\A_\eps)_{kj}, \partial_j \v^\eps_i \right\r \leq (\mu_5 + \mu_6) |\dr^\eps|^2_{L^\infty} |\nabla \v^\eps|^2_{L^2} \, .
     \end{equation}
     Hence the estimate of the term $- \left\l \tilde{\sigma} (\v^\eps, \dr^\eps, \dot{\dr}^\eps), \nabla \v^\eps \right\r $ is
     \begin{align}\label{L2-Estimate-4}
         -& \left\l \tilde{\sigma} (\v^\eps, \dr^\eps, \dot{\dr}^\eps), \nabla \v^\eps \right\r \\
       \no \leq & - \mu_1 |\dr^\eps{}^\top \A_\eps \dr^\eps|^2_{L^2} + (\mu_2 + \mu_3) |\dr^\eps|_{L^\infty} |\dot{\dr}^\eps|_{L^2} |\nabla \v^\eps|_{L^2} + (\mu_2 + \mu_3 + \mu_5 + \mu_6) |\dr^\eps|^2_{L^\infty} |\nabla \v^\eps|^2_{L^2} \, .
     \end{align}

     Then the inequalities \eqref{L2-Estimate-2}, \eqref{L2-Estimate-3} and \eqref{L2-Estimate-4} give the following basic $L^2$-estimate:
     \begin{align}\label{L2-Estimate-5}
       \no &\frac{1}{2} \frac{\d}{\d t} \left( |\v^\eps|^2_{L^2} + \rho_1 |\dot{\dr}^\eps|^2_{L^2} + |\nabla \dr^\eps|^2_{L^2} \right) + \frac{1}{2} \mu_4 |\nabla \v^\eps|^2_{L^2} - \lambda_1 |\dot{\dr}^\eps|^2_{L^2} + \mu_1 |\dr^\eps{}^\top (\nabla \v^\eps) \dr^\eps|^2_{L^2} \\
       \no \leq& |\dr^\eps|_{L^\infty} |\dot{\dr}^\eps|_{L^\infty} ( \rho_1 |\dot{\dr}^\eps|^2_{L^2} + |\nabla \dr^\eps|^2_{L^2} ) + ( |\lambda_1| + |\lambda_2| + \mu_2 + \mu_3 ) ( |\dr^\eps|_{L^\infty} + |\dr^\eps|^3_{L^\infty} ) |\nabla \v^\eps|_{L^2} |\dot{\dr}^\eps|_{L^2} \\
       & + (\mu_2 + \mu_3 + \mu_5 + \mu_6) |\dr^\eps|^2_{L^\infty} |\nabla \v^\eps|^2_{L^2} \, .
     \end{align}

     \underline{$H^s$-Estimates.} For any integer $1 \leq k \leq s$, we take $\nabla^k$ in the first equation of \eqref{Appr-Syst-Simple}, multiply by $\nabla^k \v^\eps$ and integrate by parts, then we have
     \begin{equation}\label{v-H^k-Estimate-1}
      \begin{aligned}
        & \frac{1}{2} \frac{\d}{\d t} |\nabla^k \v^\eps|^2_{L^2} + \frac{1}{2}\mu_4 |\nabla^{k+1} \v^\eps|^2_{L^2} \\
        =&  -\left\l \nabla^k (\v^\eps \cdot \nabla \v^\eps), \nabla^k \v^\eps \right\r - \left\l \nabla^k \div(\nabla \dr^\eps \odot \nabla \dr^\eps), \nabla^k \v^\eps \right\r \\
        &+ \left\l \nabla^k \div \tilde{\sigma}(\v^\eps, \dr^\eps, \dot{\dr}^\eps), \nabla^k \v^\eps \right\r \, .
       \end{aligned}
     \end{equation}
It is derived that by H\"older inequality, Sobolev embedding and the fact $\div \v =0$
\begin{align}\label{v-H^k-Estimate-2}
    \no - &\left\l \nabla^k (\v^\eps \cdot \nabla \v^\eps), \nabla^k \v^\eps \right\r \\
    \no= & - \sum_{\substack{a+b = k \\ a \geq 1}} \left\l \nabla^a \v^\eps \nabla^{b+1} \v^\eps, \nabla^k \v^\eps \right\r \\
    \no= & -  \left\l \nabla \v^\eps \nabla^k \v^\eps, \nabla^k \v^\eps \right\r-\left\l  \nabla^k \v^\eps \nabla \v^\eps, \nabla^k \v^\eps \right\r - \sum_{\substack{a + b = k \\ a \geq 2, b \geq 1}} \left\l \nabla^a \v^\eps \nabla^{b+1} \v^\eps, \nabla^k \v^\eps \right\r \\
    \leq & 2 |\nabla \v^\eps|_{L^\infty} |\nabla^k \v^\eps |^2_{L^2} +  \sum_{\substack{ a + b = k \\ a \geq 2, b \geq 1}} |\nabla^a \v^\eps|_{L^4} |\nabla^{b+1} \v^\eps|_{L^4} |\nabla^k \v^\eps|_{L^2} \\
    \no \leq & C |\nabla^3 \v^\eps|_{L^2} |\nabla^k \v^\eps|^2_{L^2} + C \sum_{\substack{ a + b = k \\ a \geq 2, b \geq 1}} |\nabla^{a+1} \v^\eps|_{L^2} |\nabla^{b+2} \v^\eps|_{L^2} |\nabla^k \v^\eps|_{L^2}\\
    \no\leq & C |\nabla^3 \v^\eps|_{L^2} |\nabla^k \v^\eps|^2_{L^2} + C |\nabla^k \v^\eps|^3_{L^2} \\
    \no\leq & C |\nabla \v^\eps|_{H^s} |\v^\eps|^2_{\dot{H}^s}\, ,
\end{align}
 and
\begin{align}\label{v-H^k-Estimate-3}
    \no& - \left\l \nabla^k \div (\nabla \dr^\eps \odot \nabla \dr^\eps), \nabla^k \v^\eps \right\r \\
    \no= & \left\l \nabla^k (\nabla \dr^\eps \odot \nabla \dr^\eps), \nabla^{k+1} \v^\eps \right\r \\
    \no= & \left\l \nabla \dr^\eps \odot \nabla^{k+1} \dr^\eps, \nabla^{k+1} \v^\eps \right\r + \left\l \nabla^{k+1} \dr^\eps \odot \nabla \dr^\eps, \nabla^{k+1} \v^\eps \right\r + \sum_{\substack{a+b=k \\ a,b \geq 1}} \left\l \nabla^{a+1} \dr^\eps \odot \nabla^{b+1} \dr^\eps , \nabla^{k+1} \v^\eps \right\r \\
    \leq & 2 |\nabla \dr^\eps|_{L^\infty} |\nabla^{k+1} \dr^\eps|_{L^2} |\nabla^{k+1} \v^\eps|_{L^2} + \sum_{\substack{a+b=k \\ a,b \geq 1}} \left\l \nabla^{a+1} \dr^\eps \odot \nabla^{b+1} \dr^\eps , \nabla^{k+1} \v^\eps \right\r \\
    \no\leq & C |\nabla^3 \dr^\eps|_{L^2} |\nabla^{k+1} \dr^\eps|_{L^2} |\nabla^{k+1} \v^\eps|_{L^2} + \sum_{\substack{a+b=k \\ a,b \geq 1}} |\nabla^{a+1} \dr^\eps|_{L^4} |\nabla^{b+1} \dr^\eps|_{L^4} |\nabla^{k+1} \v^\eps|_{L^2} \\
    \no\leq & C |\nabla^3 \dr^\eps|_{L^2} |\nabla^{k+1} \dr^\eps|_{L^2} |\nabla^{k+1} \v^\eps|_{L^2} + \sum_{\substack{a+b=k \\ a,b \geq 1}} |\nabla^{a+2} \dr^\eps|_{L^2} |\nabla^{b+2} \dr^\eps|_{L^2} |\nabla^{k+1} \v^\eps|_{L^2} \\
    \no\leq & C |\nabla^3 \dr^\eps|_{L^2} |\nabla^{k+1} \dr^\eps|_{L^2} |\nabla^{k+1} \v^\eps|_{L^2} + C |\nabla^{k+1} \dr^\eps|^2_{L^2} |\nabla^{k+1} \v^\eps|_{L^2} \\
    \no\leq & C |\nabla \dr^\eps|_{H^s}  |\nabla \dr^\eps|_{\dot{H}^s} |\nabla \v^\eps|_{H^s}\, .
\end{align}

The term $\left\l \nabla^k \div \tilde{\sigma}(\v^\eps, \dr^\eps, \dot{\dr}^\eps), \nabla^k \v^\eps \right\r$, which can be divided into five parts ($\mu_1, \mu_2, \mu_3, \mu_5, \mu_6$), remains to be estimated. First, we estimate the $\mu_1$-part: $\left\l \mu_1 \nabla^k \partial_j (\dr^\eps{}^\top \A_\eps \dr^\eps) \dr^\eps_i \dr^\eps_j, \nabla^k \v^\eps_i \right\rangle$.
\begin{align}
  \no &\left\l \mu_1 \nabla^k \partial_j (\dr^\eps{}^\top \A_\eps \dr^\eps) \dr^\eps_i \dr^\eps_j, \nabla^k \v^\eps_i \right\r \\
  \no =&-\mu_1 \left\l \nabla^k  (\dr^\eps{}^\top \A_\eps \dr^\eps \dr^\eps_i \dr^\eps_j), \nabla^k \partial_j \v^\eps_i \right\r\\
  \no =& - \mu_1 \left\l \dr^\eps{}^\top \nabla^{k+1} \v^\eps \dr^\eps, \dr^\eps{}^{\top} \nabla^{k+1} \v^\eps \dr^\eps \right\r - \mu_1 \sum_{\substack{a+b = k \\ a \geq 1}} \left\l \nabla^a (\dr^\eps_p \dr^\eps_q) \nabla^b \partial_p \v^\eps_q \dr^\eps_i \dr^\eps_j, \nabla^k \partial_j \v^\eps_i \right\r \\
  \no -& \mu_1\!\! \sum_{\substack{b+c=k \\ c \geq 1}}\!\! \left\l \dr^\eps_p \dr^\eps_q \nabla^b \partial_p \v^\eps_q \nabla^c (\dr^\eps_i \dr^\eps_j), \nabla^k \partial_j \v^\eps_i \right\r - \mu_1\!\! \!\!\!\! \sum_{\substack{a + b +c =k \\ a, c \geq 1}}\!\!\!\! \left\l \nabla^{a}(\dr^\eps_p \dr^\eps_q) \nabla^{b} \partial_p \v^\eps_q \nabla^c (\dr^\eps_i \dr^\eps_j) , \nabla^k \partial_j \v^\eps_i \right\r \\
  \no \equiv& - \mu_1 | \dr^\eps{}^\top \nabla^{k+1} \v^\eps \dr^\eps |^2_{L^2} + I_1 + I_2 + I_3\, .
\end{align}
According to H\"older inequality and Sobolev embedding, we calculate that
\begin{align}
  \no I_1 \leq& \mu_1 |\dr^\eps|^2_{L^\infty} \sum_{\substack{a_1 + a_2 + b = k \\ b \leq k-1}} \left\l |\nabla^{a_1} \dr^\eps| |\nabla^{a_2} \dr^\eps| |\nabla^{b+1} \v^\eps|, |\nabla^{k+1} \v^\eps| \right\r \\
  \no =& 2 \mu_1 |\dr^\eps|^3_{L^\infty} \sum_{\substack{a_1 + b = k \\ b \leq k-1}} \left\l |\nabla^{a_1} \dr^\eps| |\nabla^{b+1} \v^\eps|, |\nabla^{k+1} \v^\eps| \right\r \\
  \no&+ \mu_1 |\dr^\eps|^2_{L^\infty} \sum_{\substack{a_1 + a_2 + b = k \\ a_1, a_2 \geq 1}} \left\l |\nabla^{a_1} \dr^\eps| |\nabla^{a_2} \dr^\eps| |\nabla^{b+1} \v^\eps|, |\nabla^{k+1} \v^\eps| \right\r \\
  \no \leq& 2 \mu_1 |\dr^\eps|^3_{L^\infty} \sum_{\substack{a_1 + b = k \\ b \leq k-1}} |\nabla^{a_1} \dr^\eps|_{L^4} |\nabla^{b+1} \v^\eps|_{L^4} |\nabla^{k+1} \v^\eps|_{L^2} \\
   \no &+ \mu_1 |\dr^\eps|^2_{L^\infty} \sum_{\substack{a_1 + a_2 + b = k \\ a_1, a_2 \geq 1}}  |\nabla^{a_1} \dr^\eps|_{L^4} |\nabla^{a_2} \dr^\eps|_{L^4} |\nabla^{b+1} \v^\eps|_{L^\infty} |\nabla^{k+1} \v^\eps|_{L^2} \\
  \no \leq& C \mu_1 |\dr^\eps|^3_{L^\infty} \sum_{\substack{a_1 + b = k \\ b \leq k-1}} |\nabla^{a_1 + 1} \dr^\eps|_{L^2} |\nabla^{b+2} \v^\eps|_{L^2} |\nabla^{k+1} \v^\eps|_{L^2} \\
  \no &+ \mu_1 |\dr^\eps|^2_{L^\infty} \sum_{\substack{a_1 + a_2 + b = k \\ a_1, a_2 \geq 1}}  |\nabla^{a_1 +1} \dr^\eps|_{L^2} |\nabla^{a_2 + 1} \dr^\eps|_{L^2} |\nabla^{b+3} \v^\eps|_{L^2} |\nabla^{k+1} \v^\eps|_{L^2} \\
  \no \leq& C \mu_1 |\dr^\eps|^3_{L^\infty} |\nabla^{k+1} \dr^\eps|_{L^2} |\nabla^{k+1} \v^\eps|^2_{L^2} + C \mu_1 |\dr^\eps|^2_{L^\infty} |\nabla^{k+1} \dr^\eps|^2_{L^2} |\nabla^{k+1} \v^\eps|^2_{L^2} \\
  \no \leq& C \mu_1 \left(|\nabla \dr^\eps|_{H^s} |\dr^\eps|_{L^\infty} + |\nabla \dr^\eps|^2_{H^s}\right) |\dr^\eps|^2_{L^\infty} |\nabla \v^\eps|^2_{H^s}\, ,
\end{align}
and by the same estimation of the term $I_1$
\begin{align}
  \no I_2 \leq C \mu_1 \left(|\nabla \dr^\eps|_{H^s} |\dr^\eps|_{L^\infty} + |\nabla \dr^\eps|^2_{H^s}\right) |\dr^\eps|^2_{L^\infty} |\nabla \v^\eps|^2_{H^s} \, ,
\end{align}
and
\begin{align}
  \no I_3 \leq& \mu_1 \sum_{\substack{a_1+a_2+b+c_1 + c_2=k \\ a_1 + a_2, c_1 + c_2 \geq 1}} \left\l |\nabla^{a_1} \dr^\eps| |\nabla^{a_2} \dr^\eps| |\nabla^{b+1} \v^\eps| |\nabla^{c_1} \dr^\eps| |\nabla^{c_2} \dr^\eps|, |\nabla^{k+1} \v^\eps| \right\r \\
  \no =& 4 \mu_1 |\dr^\eps|_{L^\infty} \sum_{\substack{a_1 + b + c_1 + c_2 = k \\ a_1, c_1 + c_2 \geq 1}} \left\l |\nabla^{a_1} \dr^\eps|  |\nabla^{b+1} \v^\eps| |\nabla^{c_1} \dr^\eps| |\nabla^{c_2} \dr^\eps|, |\nabla^{k+1} \v^\eps| \right\r \\
  \no &+ \mu_1 \sum_{\substack{a_1+a_2+b+c_1 + c_2=k \\ a_1 , a_2, c_1 , c_2 \geq 1}} \left\l |\nabla^{a_1} \dr^\eps| |\nabla^{a_2} \dr^\eps| |\nabla^{b+1} \v^\eps| |\nabla^{c_1} \dr^\eps| |\nabla^{c_2} \dr^\eps|, |\nabla^{k+1} \v^\eps| \right\r \\
  \no =& 8 \mu_1 |\dr^\eps|^2_{L^\infty} \sum_{\substack{a_1 + b + c_1  = k \\ a_1, c_1  \geq 1}} \left\l |\nabla^{a_1} \dr^\eps|  |\nabla^{b+1} \v^\eps| |\nabla^{c_1} \dr^\eps|  , |\nabla^{k+1} \v^\eps| \right\r \\
  \no &+ 4 \mu_1 |\dr^\eps|_{L^\infty} \sum_{\substack{a_1 + b + c_1 + c_2 = k \\ a_1, c_1 ,c_2 \geq 1}} \left\l |\nabla^{a_1} \dr^\eps|  |\nabla^{b+1} \v^\eps| |\nabla^{c_1} \dr^\eps| |\nabla^{c_2} \dr^\eps|, |\nabla^{k+1} \v^\eps| \right\r \\
  \no &+ \mu_1 \sum_{\substack{a_1+a_2+b+c_1 + c_2=k \\ a_1 , a_2, c_1 , c_2 \geq 1}} \left\l |\nabla^{a_1} \dr^\eps| |\nabla^{a_2} \dr^\eps| |\nabla^{b+1} \v^\eps| |\nabla^{c_1} \dr^\eps| |\nabla^{c_2} \dr^\eps|, |\nabla^{k+1} \v^\eps| \right\r \\
  \no \leq& C \mu_1 |\dr^\eps|^2_{L^\infty} |\nabla^{k+1} \v^\eps|^2_{L^2} |\nabla^k \dr^\eps|^2_{L^2} + C \mu_1 |\dr^\eps|_{L^\infty} |\nabla^k \v^\eps|_{L^2} |\nabla^{k+1} \v^\eps|_{L^2} |\nabla^{k-1} \dr^\eps|^3_{L^2}\\
   \no &+ C \mu_1 |\nabla^{k-1} \v^\eps|_{L^2} |\nabla^{k+1} \v^\eps|_{L^2} |\nabla^{k-1} \dr^\eps|^3_{L^2} |\nabla^{k-3} \dr^\eps|_{L^2} \\
  \no \leq& C \mu_1 |\dr^\eps|^2_{L^\infty} |\nabla \v^\eps|^2_{H^s} |\nabla \dr^\eps|^2_{H^s} + C \mu_1 |\dr^\eps|_{L^\infty} |\nabla \v^\eps|^2_{H^s} |\nabla \dr^\eps|^3_{H^s} + C \mu_1 |\nabla \v^\eps|^2_{H^s} |\nabla \dr^\eps|^4_{H^s} \\
  \no =& C \mu_1 |\nabla \v^\eps|^2_{H^s}\left( |\dr^\eps|^2_{L^\infty} |\nabla \dr^\eps|^2_{H^s} + |\dr^\eps|_{L^\infty} |\nabla \dr^\eps|^3_{H^s} + |\nabla \dr^\eps|^4_{H^s} \right)\, .
\end{align}
So we get the estimate of the $\mu_1$-part
\begin{align}\label{mu1-part}
  \no &\left\l \mu_1 \nabla^k \partial_j (\dr^\eps{}^\top \A_\eps \dr^\eps \dr^\eps_i \dr^\eps_j), \nabla^k \v^\eps_i \right\r \leq - \mu_1 |\dr^\eps{}^\top \nabla^{k+1} \v^\eps \dr^\eps|^2_{L^2} \\
   &\qquad+ C \mu_1 |\nabla \v^\eps|^2_{H^s}\left( |\dr^\eps|^3_{L^\infty} |\nabla \dr^\eps|_{H^s} + |\dr^\eps|^2_{L^\infty}  |\nabla \dr^\eps|^2_{H^s} + |\dr^\eps|_{L^\infty} |\nabla \dr^\eps|^3_{H^s} + |\nabla \dr^\eps|^4_{H^s} \right)\, .
\end{align}
Second, we estimate the $\mu_2$-part: $\left\l \mu_2 \nabla^k \partial_j [\dr^\eps_j (\dot{\dr}^\eps_i + (\B_\eps)_{ki} \dr^\eps_k)], \nabla^k \v^\eps_i \right\r$.
\begin{align}\label{mu2-part}
   \no& \left\l \mu_2 \nabla^k \partial_j [\dr^\eps_j (\dot{\dr}^\eps_i + (\B_\eps)_{ki} \dr^\eps_k)], \nabla^k \v^\eps_i \right\r \\
  \no =& -\mu_2 \left\l \nabla^k [\dr^\eps_j(\dot{\dr}^\eps_i + (\B_\eps)_{ki} \dr^\eps_k)], \nabla^k \partial_j \v^\eps_i \right\r  \\
  \no =& -\mu_2 \sum_{a+b=k} \left\l \nabla^a \dot{\dr}^\eps_i \nabla^b \dr^\eps_j, \nabla^k \partial_j \v^\eps_i \right\r - \mu_2 \sum_{a+b+c=k} \left\l \nabla^a \dr^\eps_j \nabla^b (\B_\eps)_{pi} \nabla^c \dr^\eps_p, \nabla^k \partial_j \v^\eps_i \right\r \\
  \no \leq& \mu_2 |\dr^\eps|_{L^\infty} |\nabla^k \dot{\dr}^\eps|_{L^2} |\nabla^{k+1} \v^\eps|_{L^2} + \mu_2 \sum_{\substack{a+b=k \\ b \geq 1}} \left\l |\nabla^a \dot{\dr}^\eps| |\nabla^b \dr^\eps|, |\nabla^{k+1} \v^\eps| \right\r \\
  \no +& \mu_2 |\dr^\eps|^2_{L^\infty} |\nabla^{k+1} \v^\eps|^2_{L^2} + \mu_2 \sum_{\substack{a+b+c=k \\ b \leq k-1}} \left\l |\nabla^a \dr^\eps| |\nabla^{b+1} \v^\eps| |\nabla^c \dr^\eps|, |\nabla^{k+1}  \v^\eps| \right\r \\
  \no \leq& \mu_2 |\dr^\eps|_{L^\infty}  |\nabla^k \dot{\dr}^\eps|_{L^2} |\nabla^{k+1} \v^\eps|_{L^2} + \mu_2 \sum_{\substack{a+b=k \\ b \geq 1}}  |\nabla^a \dot{\dr}^\eps|_{L^4} |\nabla^b \dr^\eps|_{L^4} |\nabla^{k+1} \v^\eps|_{L^2} \\
   +& 2\mu_2 \sum_{\substack{b \leq k-1 \\ c \geq 1}} |\dr^\eps|_{L^\infty} |\nabla^{b+1} \v^\eps|_{L^4} |\nabla^c \dr^\eps|_{L^4} |\nabla^{k+1} \v^\eps|_{L^2} \\
  \no +& \mu_2 |\dr^\eps|^2_{L^\infty} |\nabla^{k+1} \v^\eps|^2_{L^2} + \mu_2 \sum_{\substack{a+b+c=k \\ a,c \geq 1}}  |\nabla^a \dr^\eps|_{L^4} |\nabla^{b+1} \v^\eps|_{L^\infty} |\nabla^c \dr^\eps|_{L^4} |\nabla^{k+1}  \v^\eps|_{L^2} \\
  \no \leq& \mu_2 |\dr^\eps|_{L^\infty} |\nabla^k \dot{\dr}^\eps|_{L^2} |\nabla^{k+1} \v^\eps|_{L^2} + C \mu_2 |\nabla^k \dot{\dr}^\eps|_{L^2} |\nabla^{k+1} \dr^\eps|_{L^2} |\nabla^{k+1} \v^\eps|_{L^2} \\
  \no +& C \mu_2 |\dr^\eps|_{L^\infty} |\nabla^{k+1} \v^\eps|^2_{L^2} |\nabla^{k+1} \dr^\eps|_{L^2} + \mu_2 |\dr^\eps|^2_{L^\infty} |\nabla^{k+1} \v^\eps|^2_{L^2} + C \mu_2 |\nabla^{k+1} \v^\eps|^2_{L^2} |\nabla^{k+1} \dr^\eps|^2_{L^2}\\
  \no \leq& \mu_2 ( |\dr^\eps|_{L^\infty} |\nabla^k \dot{\dr}^\eps|_{L^2} |\nabla^{k+1} \v^\eps|_{L^2} + |\dr^\eps|^2_{L^\infty}  |\nabla^{k+1} \v^\eps|^2_{L^2} ) +  C \mu_2 |\nabla \dr^\eps|_{H^s} |\dot{\dr}^\eps|_{H^s} |\nabla \v^\eps|_{H^s} \\
    \no +& C \mu_2 \left( |\dr^\eps|_{L^\infty} |\nabla \dr^\eps|_{H^s} + |\nabla \dr^\eps|^2_{H^s} \right) |\nabla \v^\eps|^2_{H^s}\, .
\end{align}
Third, by similar arguments on estimating the $\mu_2$-parts, we get the estimate of the $\mu_3$-part:
\begin{align}\label{mu3-part}
  \no & \left\l \mu_3 \nabla^k \partial_j [\dr^\eps_i (\dot{\dr}^\eps_j + (\B_\eps)_{kj} \dr^\eps_k)], \nabla^k \v^\eps_i \right\r \\
   \leq& \mu_3 ( |\dr^\eps|_{L^\infty} |\nabla^k \dot{\dr}^\eps|_{L^2} |\nabla^{k+1} \v^\eps|_{L^2} +  |\dr^\eps|^2_{L^\infty} |\nabla^{k+1} \v^\eps|^2_{L^2} )   \\
  \no +& C \mu_3 |\nabla \dr^\eps|_{H^s} |\dot{\dr}^\eps|_{H^s} |\nabla \v^\eps|_{H^s}  +  C \mu_3 \left( |\dr^\eps|_{L^\infty} |\nabla \dr^\eps|_{H^s} + |\nabla \dr^\eps|^2_{H^s} \right) |\nabla \v^\eps|^2_{H^s}\, .
\end{align}
Fourth, we estimate the $\mu_5$-part: $\left\l \mu_5 \nabla^k \partial_j (\dr^\eps_j \dr^\eps_p (\A_\eps)_{pi}), \nabla^k \v^\eps_i \right\r$.
\begin{align}\label{mu5-part}
   \no & \left\l \mu_5 \nabla^k \partial_j (\dr^\eps_j \dr^\eps_p (\A_\eps)_{pi}), \nabla^k \v^\eps_i \right\r \\
  \no =& - \mu_5 \left\l  \nabla^k  (\dr^\eps_j \dr^\eps_p (\A_\eps)_{pi}), \nabla^k \partial_j \v^\eps_i \right\r \\
  \no =& - \mu_5 \sum_{\substack{a + b + c = k}} \left\l \nabla^a \dr^\eps_j \nabla^b \dr^\eps_p \nabla^c (\A_\eps)_{pi}, \nabla^k \partial_j \v^\eps_i \right\r \\
  \no =& - \mu_5 \left\l \dr^\eps_j \dr^\eps_p \nabla^k (\A_\eps)_{pi} , \nabla^k \partial_j \v^\eps_i \right\r - \mu_5 \sum_{\substack{a + b + c = k \\ c \leq k - 1}} \left\l \nabla^a \dr^\eps_j \nabla^b \dr^\eps_p \nabla^c (\A_\eps)_{pi}, \nabla^k \partial_j \v^\eps_i \right\r \\
  \no \leq& \mu_5 |\dr^\eps|^2_{L^\infty} |\nabla^{k+1} \v^\eps|^2_{L^2} + 2 \mu_5 |\dr^\eps|_{L^\infty} \sum_{\substack{a + c = k \\ c \leq k - 1}} \left\l |\nabla^a \dr^\eps| |\nabla^{c+1} \v^\eps| , |\nabla^{k+1} \v^\eps| \right\r \\
  \no &+  \mu_5 \sum_{\substack{a + b + c = k \\ a, b \geq 1}} \left\l |\nabla^a \dr^\eps| |\nabla^b \dr^\eps| |\nabla^{c+1} \v^\eps |, |\nabla^{k+1} \v^\eps| \right\r \\
   \leq& \mu_5 |\dr^\eps|^2_{L^\infty} |\nabla^{k+1} \v^\eps|^2_{L^2} + 2 \mu_5 |\dr^\eps|_{L^\infty} \sum_{\substack{a + c = k \\ c \leq k - 1}}  |\nabla^a \dr^\eps|_{L^4} |\nabla^{c+1} \v^\eps|_{L^4} |\nabla^{k+1} \v^\eps|_{L^2} \\
  \no &+ \mu_5 \sum_{\substack{a + b + c = k \\ a, b \geq 1}} |\nabla^a \dr^\eps|_{L^4} |\nabla^b \dr^\eps|_{L^4} |\nabla^{c+1} \v^\eps |_{L^\infty} |\nabla^{k+1} \v^\eps|_{L^2} \\
  \no \leq&  \mu_5 |\dr^\eps|^2_{L^\infty} |\nabla^{k+1} \v^\eps|^2_{L^2} + C \mu_5 |\dr^\eps|_{L^\infty} \sum_{\substack{a + c = k \\ c \leq k - 1}}  |\nabla^{a+1} \dr^\eps|_{L^2} |\nabla^{c+2} \v^\eps|_{L^2} |\nabla^{k+1} \v^\eps|_{L^2} \\
  \no &+ C \mu_5 \sum_{\substack{a + b + c = k \\ a, b \geq 1}} |\nabla^{a+1} \dr^\eps|_{L^2} |\nabla^{b+1} \dr^\eps|_{L^2} |\nabla^{c+3} \v^\eps |_{L^2} |\nabla^{k+1} \v^\eps|_{L^2} \\
  \no \leq& \mu_5 |\dr^\eps|^2_{L^\infty} |\nabla^{k+1} \v^\eps|^2_{L^2} + C \mu_5 |\dr^\eps|_{L^\infty} |\nabla \dr^\eps|_{H^s} |\nabla \v^\eps|^2_{H^s} + C \mu_5 |\nabla \dr^\eps|^2_{H^s} |\nabla \v^\eps|^2_{H^s} \\
  \no =& \mu_5 |\dr^\eps|^2_{L^\infty} |\nabla^{k+1} \v^\eps|^2_{L^2} + C \mu_5 \left( |\dr^\eps|_{L^\infty} |\nabla \dr^\eps|_{H^s} +  |\nabla \dr^\eps|^2_{H^s} \right) |\nabla \v^\eps|^2_{H^s} \, .
\end{align}
Finally, the same arguments on the estimate of the $\mu_5$-part implies the bound of the $\mu_6$-part:
\begin{align}\label{mu6-part}
  \no &\left\l \mu_6  \nabla^k \partial_j (\dr^\eps_i \dr^\eps_p (\A_\eps)_{pj}), \nabla^k \v^\eps_i \right\r  \\ \leq & \mu_6 |\dr^\eps|^2_{L^\infty} |\nabla^{k+1} \v^\eps|^2_{L^2} + C \mu_6 \left( |\dr^\eps|_{L^\infty} |\nabla \dr^\eps|_{H^s} +  |\nabla \dr^\eps|^2_{H^s} \right) |\nabla \v^\eps|^2_{H^s} \, .
\end{align}

Substituting the inequalities \eqref{v-H^k-Estimate-2}, \eqref{v-H^k-Estimate-3}, \eqref{mu1-part}, \eqref{mu2-part}, \eqref{mu3-part}, \eqref{mu5-part} and \eqref{mu6-part} into \eqref{v-H^k-Estimate-1}, one has
\begin{align}\label{v-H^k-Estimate-4}
  \no & \frac{1}{2} \frac{\d}{\d t} |\nabla^k \v^\eps|^2_{L^2} + \frac{1}{2} |\nabla^{k+1} \v^\eps|^2_{L^2} + \mu_1 |\dr^\eps{}^\top (\nabla^{k+1} \v^\eps) \dr^\eps|^2_{L^2} \\
  \no \leq& C ( |\v^\eps|^2_{\dot{H}^s} + |\nabla \dr^\eps|_{H^s} |\nabla \dr^\eps|_{\dot{H}^s} ) |\nabla \v^\eps|_{H^s} + C (\mu_2 + \mu_3) |\nabla \dr^\eps|_{H^s} |\dot{\dr}^\eps|_{H^s} |\nabla \v^\eps|_{H^s} \\
  +& (\mu_2 + \mu_3) |\dr^\eps|_{L^\infty} |\nabla^k \dot{\dr}^\eps|_{L^2} |\nabla^{k+1} \v^\eps|_{L^2} + (\mu_2 + \mu_3 + \mu_5 + \mu_6) |\dr^\eps|^2_{L^\infty} |\nabla^{k+1} \v^\eps|^2_{L^2} \\
  \no +& C (\mu_2 + \mu_3 + \mu_5 + \mu_6) ( |\dr^\eps|_{L^\infty} |\nabla \dr^\eps|_{H^s} + |\nabla \dr^\eps|^2_{H^s} ) |\nabla \v^\eps|^2_{H^s} \\
  \no +& C \mu_1 ( |\dr^\eps|^3_{L^\infty} |\nabla \dr^\eps|_{H^s} + |\dr^\eps|^2_{L^\infty} |\nabla \dr^\eps|^2_{H^s} + |\dr^\eps|_{L^\infty} |\nabla \dr^\eps|^3_{H^s} + |\nabla \dr^\eps|^4_{H^s} ) |\nabla \v^\eps|^2_{H^s} \, .
\end{align}

Taking $\nabla^k$ in the third equation of \eqref{Appr-Syst-Simple}, multiplying by $\nabla^k \dot{\dr}^\eps$, integrating by parts, one easily obtains
\begin{align}\label{d-H^k-Estimate-1}
    \no& \frac{1}{2} \frac{\d}{\d t} \left(\rho_1 |\nabla^k \dot{\dr}^\eps|^2_{L^2} + |\nabla^{k+1} \dr^\eps|^2_{L^2} \right) -\lambda_1 |\nabla^k \dot{\dr}^\eps|^2_{L^2} \\
    \no= & - \rho_1 \sum_{\substack{a+b=k \\ a \geq 1}} \left\l \nabla^a \v^\eps \nabla^{b+1} \dot{\dr}^\eps, \nabla^k \dot{\dr}^\eps \right\r - \sum_{\substack{a+b=k+1 \\ a \geq 1}} \left\l \nabla^a \v^\eps \nabla^{b+1} \dr^\eps, \nabla^{k+1} \dr^\eps \right\r \\
    & - \rho_1 \sum_{a+b+c=k} \left\l \nabla^a \dot{\dr}^\eps \nabla^b \dot{\dr}^\eps \nabla^c \dr^\eps, \nabla^k \dot{\dr}^\eps \right\r + \sum_{a+b+c=k} \left\l \nabla^{a+1} \dr^\eps \nabla^{b+1} \dr^\eps \nabla^c \dr^\eps , \nabla^k \dot{\dr}^\eps \right\r\\
    \no & - \lambda_2 \sum_{a+b+c+e=k } \left\langle \nabla^a \dr^\eps{}^\top \nabla^b \A_\eps \nabla^c \dr^\eps \nabla^e \dr^\eps, \nabla^k \dot{\dr}^\eps \right\rangle \\
    \no &- \lambda_1 \sum_{a+b=k} \left\langle \nabla^a \B_\eps \nabla^b \dr^\eps, \nabla^k \dot{\dr}^\eps \right\rangle + \lambda_2 \sum_{a+b=k} \left\langle \nabla^a \A_\eps \nabla^b \dr^\eps, \nabla^k \dot{\dr}^\eps \right\rangle \\
    \no \equiv& R_1+ R_2 + R_3 + R_4 + R_5 + R_6 + R_7 \, .
\end{align}
By using H\"older inequality and Sobolev embedding, one can directly calculate the following estimates:
\begin{align}\label{R1}
   \no R_1= & - \rho_1 \sum_{\substack{a+b=k \\ a \geq 1}} \left\l \nabla^a \v \nabla^{b+1} \dot{\dr}, \nabla^k \dot{\dr} \right\r \\
    \no\leq & \rho_1 |\nabla \v|_{L^\infty} |\nabla^k \dot{\dr}|^2_{L^2} + \rho_1 \sum_{\substack{ a+b=k \\ a \geq 2}} |\nabla^a \v |_{L^4} |\nabla^{b+1} \dot{\dr}|_{L^4} |\nabla^k \dot{\dr}|_{L^2} \\
    \leq& C \rho_1 |\nabla^3 \v|_{L^2} |\nabla^{k} \dot{\dr}|^2_{L^2} + C \rho_1 \sum_{\substack{ a+b=k \\ a \geq 2}} |\nabla^{a+1} \v |_{L^2} |\nabla^{b+2} \dot{\dr}|_{L^2} |\nabla^k \dot{\dr}|_{L^2} \\
    \no \leq & C \rho_1 |\nabla^3 \v|_{L^2} |\nabla^{k} \dot{\dr}|^2_{L^2} + C \rho_1 |\nabla^{k+1} \v|_{L^2} |\nabla^k \dot{\dr}|^2_{L^2} \\
    \no\leq & C \rho_1 |\nabla \v|_{H^s} |\dot{\dr}|^2_{H^s}\, ,
\end{align}
and
\begin{align}\label{R2}
   \no R_2=& - \sum_{\substack{a+b=k+1 \\ a \geq 1}} \left\l \nabla^a \v^\eps \nabla^{b+1} \dr^\eps, \nabla^{k+1} \dr^\eps \right\r + |\nabla \dr^\eps|_{L^\infty} |\nabla^{k+1}  \dr^\eps|_{L^2}  |\nabla^{k+1} \v^\eps|_{L^2}  \\
   &+  \sum_{\substack{a+b=k+1 \\ a \geq 2, b\geq 2}}  |\nabla^a \v^\eps|_{L^4} |\nabla^{b+1} \dr^\eps|_{L^4} |\nabla^{k+1} \dr^\eps|_{L^2} \\
    \no \leq & C |\nabla^3 \v^\eps|_{L^2} |\nabla^{k+1} \dr^\eps|^2_{L^2} + C |\nabla^3 \dr^\eps|_{L^2} |\nabla^{k+1} \v^\eps|_{L^2} |\nabla^{k+1} \dr^\eps|_{L^2} + C |\nabla^{k+1} \v^\eps|_{L^2} |\nabla^{k+1} \dr^\eps|^2_{L^2} \\
    \no\leq & C |\nabla \v^\eps|_{H^s} |\nabla \dr^\eps|^2_{\dot{H}^s} |\nabla \dr^\eps|^2_{H^s}\, ,
\end{align}
and
\begin{align}\label{R3}
    \no R_3=& - \rho_1 \sum_{a+b+c=k} \left\l \nabla^a \dot{\dr}^\eps \nabla^b \dot{\dr}^\eps \nabla^c \dr^\eps, \nabla^k \dot{\dr}^\eps \right\r \\
    \no\leq & \rho_1 |\dr^\eps|_{L^\infty} \sum_{a+b=k} \left\l |\nabla^a \dot{\dr}^\eps| |\nabla^b \dot{\dr}^\eps|, |\nabla^k \dot{\dr}^\eps| \right\r + \rho_1 \sum_{\substack{ a + b + c = k \\ c \geq 1}} \left\l |\nabla^a \dot{\dr}^\eps| |\nabla^b \dot{\dr}^\eps| |\nabla^c \dr^\eps|, |\nabla^k \dot{\dr}^\eps| \right\r \\
    \no \leq& 2 \rho_1 |\dr^\eps|_{L^\infty} |\dot{\dr}^\eps|_{L^\infty} |\nabla^k \dot{\dr}^\eps|^2_{L^2} + \rho_1 |\dr^\eps|_{L^\infty} \sum_{\substack{a+b=k \\ a,b \geq 1}} |\nabla^a \dot{\dr}^\eps|_{L^4} |\nabla^b \dot{\dr}^\eps|_{L^4} |\nabla^k \dot{\dr}^\eps|_{L^2} \\
    +& \rho_1 \sum_{\substack{ a+b+c=k \\ c \geq 1}} |\nabla^a \dot{\dr}^\eps|_{L^6} |\nabla^b \dot{\dr}^\eps|_{L^6} |\nabla^c \dr^\eps|_{L^6} |\nabla^k \dot{\dr}^\eps|_{L^2} \\
    \no \leq& C \rho_1 |\dr^\eps|_{L^\infty} |\nabla \dot{\dr}^\eps|_{L^2} |\nabla^k \dot{\dr}^\eps|^2_{L^2} + C \rho_1 |\dr^\eps|_{L^\infty} \sum_{\substack{a+b=k \\ a,b \geq 1}} |\nabla^{a+1} \dot{\dr}^\eps|_{L^2} |\nabla^{b+1} \dot{\dr}^\eps|_{L^2} |\nabla^k \dot{\dr}^\eps|_{L^2} \\
    \no +& \rho_1 \sum_{\substack{ a+b+c=k \\ c \geq 1}} |\nabla^{a+1} \dot{\dr}^\eps|_{L^2} |\nabla^{b+1} \dot{\dr}^\eps|_{L^2} |\nabla^{c+1} \dr^\eps|_{L^2} |\nabla^k \dot{\dr}^\eps|_{L^2} \\
    \no \leq& C \rho_1 |\dr^\eps|_{L^\infty} |\dot{\dr}^\eps|^3_{H^s} + C \rho_1 |\nabla \dr^\eps|_{H^s} |\dot{\dr}^\eps|^3_{H^s} \\
    \no =& C \rho_1 \left( |\dr^\eps|_{L^\infty} + |\nabla \dr^\eps|_{H^s} \right) |\dot{\dr}^\eps|^3_{H^s}\ ,
\end{align}
and
  \begin{align}\label{R4}
    \no R_4=& \sum_{a+b+c=k} \left\l \nabla^{a+1} \dr^\eps \nabla^{b+1} \dr^\eps \nabla^c \dr^\eps , \nabla^k \dot{\dr}^\eps \right\r \\
    \no = & \sum_{a+b=k} \left\l \nabla^{a+1} \dr^\eps \nabla^{b+1} \dr^\eps \ \dr^\eps, \nabla^k \dot{\dr}^\eps \right\r + \sum_{\substack{a+b+c=k \\ c \geq 1}} \left\l \nabla^{a+1} \dr^\eps \nabla^{b+1} \dr^\eps \nabla^c \dr^\eps, \nabla^k \dot{\dr}^\eps \right\r \\
    \no \leq& 2 |\dr^\eps|_{L^\infty} |\nabla \dr^\eps|_{L^\infty} |\nabla^{k+1} \dr^\eps|_{L^2} |\nabla^k \dot{\dr}^\eps|_{L^2} + |\dr^\eps|_{L^\infty} \sum_{\substack{a+b=k \\ a,b \geq 1}} |\nabla^{a+1} \dr^\eps|_{L^4} |\nabla^{b+1} \dr^\eps|_{L^4} |\nabla^k \dot{\dr}^\eps|_{L^2} \\
    \no &+ \sum_{\substack{a+b+c=k \\ c \geq 1}} |\nabla^{a+1} \dr^\eps|_{L^6} |\nabla^{b+1} \dr^\eps|_{L^6} |\nabla^c \dr^\eps|_{L^6} |\nabla^k \dot{\dr}^\eps|_{L^2} \\
     \no\leq& C |\dr^\eps|_{L^\infty} |\nabla^3 \dr^\eps|_{L^2} |\nabla^{k+1} \dr^\eps|_{L^2} |\nabla^k \dot{\dr}^\eps|_{L^2} + C |\dr^\eps|_{L^\infty} \sum_{\substack{a+b=k \\ a,b \geq 1}} |\nabla^{a+2} \dr^\eps|_{L^2} |\nabla^{b+2} \dr^\eps|_{L^2} |\nabla^k \dot{\dr}^\eps|_{L^2} \\
     &+ C \sum_{\substack{a+b+c=k \\ c \geq 1}} |\nabla^{a+2} \dr^\eps|_{L^2} |\nabla^{b+2} \dr^\eps|_{L^2} |\nabla^{c+1} \dr^\eps|_{L^2} |\nabla^k \dot{\dr}^\eps|_{L^2} \\
    \no \leq& C |\dr^\eps|_{L^\infty} |\nabla \dr^\eps|^2_{\dot{H}^s} |\dot{\dr}^\eps|_{H^s} + C |\nabla \dr^\eps|^3_{\dot{H}^s} |\dot{\dr}^\eps|_{H^s} \\
    \no =&  C \left( |\dr^\eps|_{L^\infty} |\nabla \dr^\eps|^2_{\dot{H}^s} + |\nabla \dr^\eps|^3_{\dot{H}^s} \right) |\dot{\dr}^\eps|_{H^s} \, ,
  \end{align}
and
\begin{align}\label{R5}
  \no R_5 = & \lambda_2 \sum_{a+b+c+e=k} \left\l \nabla^a {\dr}^\eps{}^\top \nabla^b \A_\eps \nabla^c \dr^\eps \nabla^e \dr^\eps, \nabla^k \dot{\dr}^\eps \right\r \\
  \no \leq& 3 |\lambda_2| |\dr^\eps|_{L^\infty} \sum_{a+b+c=k} \left\l |\nabla^a \dr^\eps| |\nabla^{b+1} \v^\eps| |\nabla^c \dr^\eps|, |\nabla^k \dot{\dr}^\eps| \right\r \\
  \no +& |\lambda_2| \sum_{\substack{a+b+c+e=k \\ a,c,e \geq 1}} \left\l |\nabla^a \dr^\eps| |\nabla^{b+1} \v^\eps| |\nabla^c \dr^\eps| |\nabla^e \dr^\eps|, |\nabla^k \dot{\dr}^\eps| \right\r \\
  \no =& 6 |\lambda_2| |\dr^\eps|^2_{L^\infty} \sum_{a+b=k} \left\l |\nabla^a \dr^\eps| |\nabla^{b+1} \v^\eps|, |\nabla^k \dot{\dr}^\eps|  \right\r \\
  \no+& 3 |\lambda_2| |\dr^\eps|_{L^\infty} \sum_{\substack{a+b+c=k \\ a,c \geq 1}} \left\l |\nabla^a \dr^\eps| |\nabla^{b+1} \v^\eps| |\nabla^c \dr^\eps|, |\nabla^k \dot{\dr}^\eps|   \right\r \\
  \no +& |\lambda_2| \sum_{\substack{a+b+c+e=k \\ a,c,e \geq 1}} \left\l |\nabla^a \dr^\eps| |\nabla^{b+1} \v^\eps| |\nabla^c \dr^\eps| |\nabla^e \dr^\eps|, |\nabla^k \dot{\dr}^\eps| \right\r \\
  \no =& 6 |\lambda_2| |\dr^\eps|^3_{L^\infty} \left\l |\nabla^{k+1} \v^\eps|, |\nabla^k \dot{\dr}^\eps| \right\r + 6 |\lambda_2| |\dr^\eps|^2_{L^\infty} \sum_{\substack{a+b=k \\ a \geq 1}} \left\l |\nabla^a \dr^\eps| |\nabla^{b+1} \v^\eps|, |\nabla^k \dot{\dr}^\eps|  \right\r \\
  \no+& 3 |\lambda_2| |\dr^\eps|_{L^\infty} \sum_{\substack{a+b+c=k \\ a,c \geq 1}} \left\l |\nabla^a \dr^\eps| |\nabla^{b+1} \v^\eps| |\nabla^c \dr^\eps|, |\nabla^k \dot{\dr}^\eps|   \right\r \\
  \no +& |\lambda_2| \sum_{\substack{a+b+c+e=k \\ a,c,e \geq 1}} \left\l |\nabla^a \dr^\eps| |\nabla^{b+1} \v^\eps| |\nabla^c \dr^\eps| |\nabla^e \dr^\eps|, |\nabla^k \dot{\dr}^\eps| \right\r \\
  \leq& 6 |\lambda_2| |\dr^\eps|^3_{L^\infty} |\nabla^{k+1} \v^\eps|_{L^2} |\nabla^k \dot{\dr}^\eps|_{L^2} + 6 |\lambda_2| |\dr^\eps|^2_{L^\infty} \sum_{\substack{a+b=k \\ a \geq 1}} |\nabla^a \dr^\eps|_{L^4} |\nabla^{b+1} \v^\eps|_{L^4} |\nabla^k \dot{\dr}^\eps|_{L^2} \\
  \no +& 3 |\lambda_2| |\dr^\eps|_{L^\infty} \sum_{\substack{a+b+c=k \\ a,c \geq 1}} |\nabla^a \dr^\eps|_{L^4} |\nabla^{b+1} \v^\eps|_{L^\infty} |\nabla^c \dr^\eps|_{L^4} |\nabla^k \dot{\dr}^\eps|_{L^2} \\
  \no +& |\lambda_2| \sum_{\substack{a+b+c+e=k \\ a,c,e \geq 1}} |\nabla^a \dr^\eps|_{L^6} |\nabla^{b+1} \v^\eps|_{L^\infty} |\nabla^c \dr^\eps|_{L^6} |\nabla^e \dr^\eps|_{L^6} |\nabla^k \dot{\dr}^\eps|_{L^2} \\
  \no \leq& 6 |\lambda_2| |\dr^\eps|^3_{L^\infty} |\nabla^{k+1} \v^\eps|_{L^2} |\nabla^k \dot{\dr}^\eps|_{L^2} + C  |\lambda_2| |\dr^\eps|^2_{L^\infty} \sum_{\substack{a+b=k \\ a \geq 1}} |\nabla^{a+1} \dr^\eps|_{L^2} |\nabla^{b+2} \v^\eps|_{L^2} |\nabla^k \dot{\dr}^\eps|_{L^2} \\
  \no +& C |\lambda_2| |\dr^\eps|_{L^\infty} \sum_{\substack{a+b+c=k \\ a,c \geq 1}} |\nabla^{a+1} \dr^\eps|_{L^2} |\nabla^{b+3} \v^\eps|_{L^2} |\nabla^{c+1} \dr^\eps|_{L^2} |\nabla^k \dot{\dr}^\eps|_{L^2} \\
  \no +& C |\lambda_2| \sum_{\substack{a+b+c+e=k \\ a,c,e \geq 1}} |\nabla^{a+1} \dr^\eps|_{L^2} |\nabla^{b+3} \v^\eps|_{L^3} |\nabla^{c+1} \dr^\eps|_{L^2} |\nabla^{e+1} \dr^\eps|_{L^2} |\nabla^k \dot{\dr}^\eps|_{L^2} \\
  \no \leq& 6 |\lambda_2| |\dr^\eps|^3_{L^\infty} |\nabla^{k+1} \v^\eps|_{L^2} |\nabla^k \dot{\dr}^\eps|_{L^2} + C |\lambda_2| |\dr^\eps|^2_{L^\infty} |\nabla \dr^\eps|_{H^s} |\nabla \v^\eps|_{H^s} |\dot{\dr}^\eps|_{H^s} \\
  \no +& C |\lambda_2| |\dr^\eps|_{L^\infty} |\nabla \dr^\eps|^2_{H^s} |\nabla \v^\eps|_{H^s} |\dot{\dr}^\eps|_{H^s} + C |\lambda_2| |\nabla \dr^\eps|^3_{H^s} |\nabla \v^\eps|_{H^s} |\dot{\dr}^\eps|_{H^s} \\
  \no =& 6 |\lambda_2| |\dr^\eps|^3_{L^\infty} |\nabla^{k+1} \v^\eps|_{L^2} |\nabla^k \dot{\dr}^\eps|_{L^2}  \\
  \no +& C |\lambda_2| \left( |\dr^\eps|^2_{L^\infty} |\nabla \dr^\eps|_{H^s} + |\dr^\eps|_{L^\infty} |\nabla \dr^\eps|^2_{H^s} + |\nabla \dr^\eps|^3_{H^s} \right) |\nabla \v^\eps|_{H^s} |\dot{\dr}^\eps|_{H^s}\, ,
\end{align}
and
\begin{align}\label{R6}
  \no R_6 = & - \lambda_1 \sum_{a+b=k} \left\langle \nabla^a \B_\eps \nabla^b \dr^\eps, \nabla^k \dot{\dr}^\eps \right\rangle \\
  \no =& - \lambda_1 \left\l  \nabla^k \B_\eps \ \dr^\eps, \nabla^k \dot{\dr}^\eps \right\r - \lambda_1 \sum_{\substack{a+b=k \\ b \geq 1}} \left\l \nabla^a \B_\eps \nabla^b \dr^\eps, \nabla^k \dot{\dr}^\eps \right\r \\
  \no \leq& |\lambda_1| |\dr^\eps|_{L^\infty} |\nabla^{k+1} \v^\eps|_{L^2} |\nabla^k \dot{\dr}^\eps|_{L^2} + |\lambda_1| \sum_{\substack{a+b=k \\ b \geq 1}} |\nabla^{a+1} \v^\eps|_{L^4} |\nabla^b \dr^\eps|_{L^4} |\nabla^k \dot{\dr}^\eps|_{L^2} \\
  \leq& |\lambda_1| |\dr^\eps|_{L^\infty} |\nabla^{k+1} \v^\eps|_{L^2} |\nabla^k \dot{\dr}^\eps|_{L^2} + C |\lambda_1| \sum_{\substack{a+b=k \\ b \geq 1}} |\nabla^{a+2} \v^\eps|_{L^2} |\nabla^{b+1} \dr^\eps|_{L^2} |\nabla^k \dot{\dr}^\eps|_{L^2} \\
  \no \leq&  |\lambda_1| |\dr^\eps|_{L^\infty} |\nabla^{k+1} \v^\eps|_{L^2} |\nabla^k \dot{\dr}^\eps|_{L^2} + C |\lambda_1| |\nabla \dr^\eps|_{H^s} |\nabla \v^\eps|_{H^s} |\dot{\dr}^\eps|_{H^s} \\
  \no =& |\lambda_1| |\dr^\eps|_{L^\infty} |\nabla^{k+1} \v^\eps|_{L^2} |\nabla^k \dot{\dr}^\eps|_{L^2} +  C |\lambda_1|  |\nabla \dr^\eps|_{H^s} |\nabla \v^\eps|_{H^s} |\dot{\dr}^\eps|_{H^s}\, ,
\end{align}
and by similarly estimating on the term $R_6$
\begin{align}\label{R7}
  R_7 \leq |\lambda_2| |\dr^\eps|_{L^\infty} |\nabla^{k+1} \v^\eps|_{L^2} |\nabla^k \dot{\dr}^\eps|_{L^2} + C |\lambda_2|  |\nabla \dr^\eps|_{H^s} |\nabla \v^\eps|_{H^s} |\dot{\dr}^\eps|_{H^s}\, .
\end{align}

Combining the inequalities \eqref{R1}, \eqref{R2}, \eqref{R3}, \eqref{R4}, \eqref{R5}, \eqref{R6} and \eqref{R7}, it can be derived from \eqref{d-H^k-Estimate-1} that the following estimate holds:
\begin{align}\label{d-H^k-Estimate-2}
    \no& \frac{1}{2} \frac{\d}{\d t} \left( \rho_1 |\nabla^k \dot{\dr}^\eps|^2_{L^2} + |\nabla^{k+1} \dr^\eps|^2_{L^2} \right) - \lambda_1 |\nabla^k \dot{\dr}^\eps|^2_{L^2} \\
    \no \leq & C \left( \rho_1 |\dot{\dr}^\eps|^2_{H^s} + |\nabla \dr^\eps|^2_{\dot{H}^s} \right) |\nabla \v^\eps|_{H^s} + C \rho_1 \left( |\dr^\eps|_{L^\infty} + |\nabla \dr^\eps|_{H^s} \right) |\dot{\dr}^\eps|^3_{H^s} \\
    +& C \left( |\dr^\eps|_{L^\infty} |\nabla \dr^\eps|^2_{\dot{H}^s} + |\nabla \dr^\eps|^3_{\dot{H}^s} \right) |\dot{\dr}^\eps|_{H^s} \\
     \no +& \left[6 |\lambda_1| |\dr^\eps|^2_{L^\infty} +( |\lambda_1| + |\lambda_2|) |\dr^\eps|_{L^\infty} \right] |\nabla^{k+1} \v^\eps|_{L^2} |\nabla^k \dot{\dr}^\eps|_{L^2}  \\
      \no +& C (|\lambda_1| + |\lambda_2|) \left( |\nabla \dr^\eps|_{H^s} + |\dr^\eps|^2_{L^\infty} |\nabla \dr^\eps|_{H^s} + |\dr^\eps|_{L^\infty} |\nabla \dr^\eps|^2_{H^s}  + |\nabla \dr^\eps|^3_{H^s} \right) |\dot{\dr}^\eps|_{H^s} |\nabla \v^\eps|_{H^s} \, .
\end{align}

Then, by the coefficients relations $\lambda_1 = \mu_2 - \mu_3$, $\lambda_2 = \mu_5 - \mu_6$ and $\mu_2 + \mu_3 = \mu_6 - \mu_5 \geq 0$, the inequalities \eqref{L2-Estimate-5}, \eqref{v-H^k-Estimate-4} and \eqref{d-H^k-Estimate-2} reduce to
\begin{align}\label{H^s-Estimate-1}
  \no  & \frac{1}{2} \frac{\d}{\d t} \left( |\v^\eps|^2_{H^s} + \rho_1 |\dot{\dr}^\eps|^2_{H^s} + |\nabla \dr^\eps|^2_{H^s} \right) + \frac{1}{2} \mu_4 |\nabla \v^\eps|^2_{H^s} - \lambda_1 |\dot{\dr}^\eps|^2_{H^s} + \mu_1 \sum_{k=0}^s |\dr^\eps{}^\top (\nabla^{k+1} \v^\eps) \dr^\eps|^2_{L^2} \\
  \no \leq& ( 7 |\lambda_1| - 2 \lambda_2 ) ( |\dr^\eps|_{L^\infty} + |\dr^\eps|^2_{L^\infty} + |\dr^\eps|^3_{L^\infty} ) |\dot{\dr}^\eps|_{H^s} |\nabla \v^\eps|_{H^s} + 2 \mu_6 |\dr^\eps|^2_{L^\infty} |\nabla \v^\eps|^2_{H^s} \\
  \no +& C  ( |\v^\eps|^2_{\dot{H}^s} + \rho_1 |\dot{\dr}^\eps|^2_{H^s} +  |\nabla \dr^\eps|_{H^s} |\nabla \dr^\eps|_{\dot{H}^s}  ) |\nabla \v^\eps|_{H^s} - C \lambda_2 |\nabla \dr^\eps|_{H^s} |\dot{\dr}^\eps|_{H^s} |\nabla \v^\eps|_{H^s} \\
  +& C ( |\dr^\eps|_{L^\infty} |\dot{\dr}^\eps|_{H^s} + |\dr^\eps|_{L^\infty} |\nabla \dr^\eps|_{H^s} + |\nabla \dr^\eps|^2_{H^s} + |\dot{\dr}^\eps|^2_{H^s} ) ( \rho_1 |\dot{\dr}^\eps|^2_{H^s} + |\nabla \dr^\eps|^2_{H^s} ) \\
  \no +& C (|\lambda_1| - \lambda_2) ( |\nabla \dr^\eps|_{H^s} + |\dr^\eps|^2_{L^\infty} |\nabla \dr^\eps|_{H^s} + |\dr^\eps|_{L^\infty} |\nabla \dr^\eps|^2_{H^s} + |\nabla \dr^\eps|^3_{H^s} ) |\dot{\dr}^\eps|_{H^s} |\nabla \v^\eps|_{H^s} \\
  \no +& C (\mu_1 + 2 \mu_6) \Big{[} (|\dr^\eps|_{L^\infty} + |\dr^\eps|^3_{L^\infty}) |\nabla \dr^\eps|_{H^s} \\
  \no +& (1 + |\dr^\eps|^2_{L^\infty}) |\nabla \dr^\eps|^2_{H^s} + |\dr^\eps|_{L^\infty} |\nabla \dr^\eps|^3_{H^s} + |\nabla \dr^\eps|^4_{H^s} \Big{]} |\nabla \v^\eps|^2_{H^s} \, ,
\end{align}
where $C = C (n,s) > 0$. According to the inequality \eqref{L2-Estimate-1}, we know that
\begin{align}\label{L^infty-Estimate}
  \no |\dr^\eps|_{L^\infty} \leq& |\dr^\eps - \J_\eps \dr^{in}|_{L^\infty} + |\J_\eps \dr^{in} |_{L^\infty} \\
  \no \leq& C |\dr^\eps - \J_\eps \dr^{in}|_{H^{s+1}} + 1 \\
  \leq& C |\dr^\eps - \J_\eps \dr^{in}|_{L^2} + C |\nabla \dr^\eps|_{H^s} +  C |\nabla \dr^{in}|_{H^s} + 1 \\
  \no \leq& C E_\eps^\frac{1}{2} (t) + C |\nabla \dr^{in}|_{H^s} + 1 \, .
\end{align}

As a consequence, it is derived from the inequalities \eqref{L2-Estimate-1}, \eqref{H^s-Estimate-1} and \eqref{L^infty-Estimate} that
\begin{align}
  \no & \frac{1}{2} \frac{\d}{\d t} E_\eps(t) + \frac{1}{2} \mu_4 F_\eps (t) \\
  \no \leq& C \big{(} \tfrac{1 + |\lambda_1|}{\sqrt{\rho_1}} + |\nabla \dr^{in}|_{H^s} \big{)} E_\eps(t) + \tfrac{1}{\sqrt{\rho_1}} ( 7 |\lambda_1| - 2 \lambda_2 ) \sum_{i=1}^3 |\dr^\eps|^i_{L^\infty} E_\eps^\frac{1}{2} (t) F_\eps^\frac{1}{2} (t) \\
  \no +& 2 \mu_6 |\dr^\eps|^2_{L^\infty} F_\eps(t) + C \big{(} 1 - \tfrac{\lambda_2}{\sqrt{\rho_1}} \big{)} E_\eps(t) F_\eps^\frac{1}{2} (t) + C ( 1 + \tfrac{1}{(\sqrt{\rho_1})^3} ) ( |\dr^\eps|_{L^\infty} E_\eps^\frac{1}{2} (t) + E_\eps(t) ) E_\eps(t) \\
  \no +& \tfrac{C}{\sqrt{\rho_1}} ( |\lambda_1| - \lambda_2 ) ( E_\eps^\frac{1}{2} (t) + |\dr^\eps|_{L^\infty} E_\eps^\frac{1}{2} (t) + |\dr^\eps|_{L^\infty} E_\eps(t) + E_\eps^\frac{3}{2}(t) ) ( E_\eps(t) + F_\eps(t)) \\
  \no +& C (\mu_1 + \mu_6) [ (|\dr^\eps|_{L^\infty} + |\dr^\eps|^3_{L^\infty}) E_\eps^\frac{1}{2} (t) + (1+ |\dr^\eps|^2_{L^\infty}) E_\eps(t) + |\dr^\eps|_{L^\infty} E_\eps^\frac{3}{2} (t) + E_\eps^2 (t)] F_\eps (t) \\
  \no \leq& C \Big{[} \tfrac{1 + |\lambda_1| - \lambda_2}{\sqrt{\rho_1}} + \tfrac{1}{(\sqrt{\rho_1})^3} + |\nabla \dr^{in}|_{H^s} + \tfrac{ |\lambda_1| - \lambda_2 }{\sqrt{\rho_1}} |\nabla \dr^{in}|^2_{H^s} \Big{]} ( E_\eps (t) + E_\eps^2(t) + E_\eps^3(t) ) \\
  \no +& C \big{(}1-\tfrac{\lambda_2}{\sqrt{\rho_1}}\big{)} E_\eps(t) F_\eps^\frac{1}{2}(t) + \tfrac{3}{\sqrt{\rho_1}} (|\lambda_1| - 2 \lambda_2) E_\eps^\frac{1}{2}(t) F_\eps^\frac{1}{2}(t) \\
  \no +& 2 \mu_6 F_\eps(t) + C \big{(} \mu_1 + \mu_6 + \tfrac{|\lambda_1| - \lambda_2}{\sqrt{\rho_1}}  \big{)} (1+|\nabla \dr^{in}|^2_{H^s}) \Big{(} |\nabla \dr^{in}|_{H^s} + \sum_{i=1}^5 E_\eps^\frac{i}{2} (t) \Big{)} F_\eps(t) \, ,
\end{align}
which immediately implies the inequality \eqref{Bounds-2} and then the proof of Lemma \ref{Unif-Bnd-Lemma} is finished.

\end{proof}

We emphasize that if $|\dr^\eps|=1$, we know that $\left\l \gamma(\v^\eps, \dr^\eps, \dot{\dr}^\eps) \dr^\eps, \dot{\dr}^\eps \right\r = 0$. Then the $L^2$-estimate \eqref{L2-Estimate-5} would be presented as
\begin{align}\label{L2-Estimate-global}
   \no &\frac{1}{2} \frac{\d}{\d t} \left( |\v^\eps|^2_{L^2} + \rho_1 |\dot{\dr}^\eps|^2_{L^2} + |\nabla \dr^\eps|^2_{L^2} \right) + \frac{1}{2} \mu_4 |\nabla \v^\eps|^2_{L^2} - \lambda_1 |\dot{\dr}^\eps|^2_{L^2} + \mu_1 |\dr^\eps{}^\top (\nabla \v^\eps) \dr^\eps|^2_{L^2} \\
    \leq& ( |\lambda_1| - 2 \lambda_2| )  |\dr^\eps|_{L^\infty}  |\nabla \v^\eps|_{L^2} |\dot{\dr}^\eps|_{L^2}  + 2 \mu_6  |\dr^\eps|^2_{L^\infty} |\nabla \v^\eps|^2_{L^2} \, .
\end{align}
By using the fact $|\dr^\eps|=1$, one can deduce that by the inequalities \eqref{v-H^k-Estimate-4}, \eqref{d-H^k-Estimate-2} and \eqref{L2-Estimate-global}
\begin{align}\label{H^s-Estimate-global}
  \no & \frac{1}{2} \frac{\d}{\d t} \left( |\v^\eps|^2_{H^s} + \rho_1 |\dot{\dr}^\eps|^2_{H^s} + |\nabla \dr^\eps|^2_{H^s} \right) + \frac{1}{2} \mu_4 |\nabla \v^\eps|^2_{H^s} - \lambda_1 |\dot{\dr}^\eps|^2_{H^s} + \mu_1 \sum_{k=0}^s |\dr^\eps{}^\top (\nabla^{k+1} \v^\eps) \dr^\eps|^2_{L^2} \\
  \no \leq& 2 \mu_6 |\nabla \v^\eps|^2_{H^s} + ( 7 |\lambda_2| - 2 \lambda_2 ) |\dot{\dr}^\eps|_{H^s} |\nabla \v^\eps|_{H^s} \\
  \no +& C  (1-\lambda_2) ( |\v^\eps|^2_{\dot{H}^s} + \rho_1 |\dot{\dr}^\eps|^2_{H^s} + |\nabla \dr^\eps|_{H^s} |\nabla \dr^\eps|_{\dot{H}^s} + |\nabla \dr^\eps|_{H^s} |\dot{\dr}^\eps|_{H^s} ) |\nabla \v^\eps|_{H^s} \\
  +& C \big{(} 1 + \tfrac{1}{\sqrt{\rho_1}} \big{)} ( |\dot{\dr}^\eps|_{H^s} + |\nabla \dr^\eps|_{H^s} + |\dot{\dr}^\eps|^2_{H^s} + |\nabla \dr^\eps|^2_{H^s} ) ( \rho_1 |\dot{\dr}^\eps|^2_{H^s} + |\nabla \dr^\eps|^2_{\dot{H}^s} ) \\
  \no +& C ( |\lambda_1| - \lambda_2 ) ( |\nabla \dr^\eps|_{H^s} + |\nabla \dr^\eps|^2_{H^s} + |\nabla \dr^\eps|^3_{H^s} ) |\dot{\dr}^\eps|_{H^s} |\nabla \v^\eps|_{H^s} \\
  \no +& C (\mu_1 + 2 \mu_6) ( |\nabla \dr^\eps|_{H^s} + |\nabla \dr^\eps|^2_{H^s} + |\nabla \dr^\eps|^3_{H^s} + |\nabla \dr^\eps|^4_{H^s} ) |\nabla \v^\eps|^2_{H^s} \, ,
\end{align}
which will be used in extending the global solutions.

\section{The proof of Theorem \ref{Main-Thm}.}

In this section, we are going to show the main theorem by analyzing some results of compactness and then passing to the limit on the approximate system \eqref{Appr-Syst-Simple}.

\subsection{The proof of Part (I) of Theorem \ref{Main-Thm}.} One observes that
$$ E_\epsilon (0) = |\J_\eps \v^{in}|^2_{H^s} + \rho_1 |\J_\eps {\tilde\dr}^{in}|^2_{H^s} + |\nabla \J_\eps \dr^{in}|^2_{H^s} \leq E^{in} \, .$$
If $ E^{in} <  \min \left\{ 1,  \frac{\rho_1 \beta^2}{ [ 96 C ( \sqrt{\rho_1} (\mu_1 + \mu_6) +  |\lambda_1| - \lambda_2  ) ]^2 }  \right\}  $, where the positive constant $C = C(n,s) > 0$ is determined in Lemma \ref{Unif-Bnd-Lemma}, then by simple calculation we have ${\bf Q} (E_\eps (0)) \leq \frac{1}{8} \beta $ and
$$C_0 (\lambda_1, \lambda_2, \beta,\rho_1, |\nabla \dr^{in}|_{H^s}) \leq C_1 \equiv  C_0 (\lambda_1, \lambda_2, \beta,\rho_1, 1)\, .$$
 We define
  $$ T^*_\eps = \sup \left\{ T > 0; E_\eps(t) \leq 2 \textrm{\ and\ } {\bf Q }(E_\eps (t)) \leq \frac{1}{4} \beta \textrm{\ hold \ for\ all\ } t \in [0,T] \right\} \, . $$

  According to the calculation \eqref{L2-Estimate-1}, we know that
  $$ \frac{\d}{\d t} |\dr^\eps - \J_\eps \dr^{in}|_{L^2} \leq C (1 + |\nabla \dr^{in}|_{H^s}) ( |\v^\eps|_{H^s} + |\dot{\dr}^\eps|_{H^s} ) \, . $$
  Then Lemma \ref{Local-Existence-Approximated} implies that $E_\eps(t)$ is continuous. Consequently, we know that $T^*_\eps > 0$. Then the inequality \eqref{Bounds-2} in Lemma \ref{Unif-Bnd-Lemma} implies that for any fixed $\eps > 0$ and for all $t \in [ 0,  T^*_\eps  ]$
    \begin{align}
     \no \frac{1}{2} \frac{\d}{\d t} E_\epsilon(t) + \big{[} \tfrac{1}{4}\beta - {\bf Q}(E_\eps (t) ) \big{]} F_\epsilon(t) \leq 12 C_1 E_\epsilon(t) \, ,
  \end{align}
  which immediately imply that
  \begin{equation}\no
    E_\eps (t) \leq E_\eps(0) e^{24 C_1 t} \leq E^{in} e^{24 C_1 t}
  \end{equation}
  holds for all $\eps > 0$ and $t \in [ 0,  T^*_\eps  ]$. Thus there is a $0 < T \leq \frac{1}{48 C_1} \ln \frac{1}{E^{in}}$ such that for all $t \in [0, \min \{ T, T^*_\eps \}]$
  $$ E_\eps (t) \leq E^{in} e^{24 C_1 T} \leq \sqrt{E^{in}} < 1 \, , $$
  which consequently implies that for all $t \in [0, \min \{ T, T^*_\eps \}]$
  \begin{align}
    \no {\bf Q} (E_\eps(t)) \leq & 2 C \big{(} \mu_1 + \mu_6 + \tfrac{|\lambda_1| - \lambda_2}{\sqrt{\rho_1}} \big{)} ( \sqrt{E^{in}} + 5 \sqrt{E_\eps(t)} ) \\
    \no \leq& 2 C \big{(} \mu_1 + \mu_6 + \tfrac{|\lambda_1| - \lambda_2}{\sqrt{\rho_1}} \big{)} ( \sqrt{E^{in}} + 5 \sqrt{\sqrt{E^{in}}} ) \\
    \no \leq& 12 C \big{(} \mu_1 + \mu_6 + \tfrac{|\lambda_1| - \lambda_2}{\sqrt{\rho_1}} \big{)} (E^{in})^\frac{1}{4} \, .
  \end{align}

  So there is an $\eps_0 = \min \Big{\{} 1 , \frac{\rho_1 \beta^2}{ [ 96 C ( \sqrt{\rho_1} (\mu_1 + \mu_6) +  |\lambda_1| - \lambda_2  ) ]^2 } , \frac{\rho^2_1 \beta^4}{ [ 96 C ( \sqrt{\rho_1} (\mu_1 + \mu_6) +  |\lambda_1| - \lambda_2  ) ]^4 } \Big{\}} > 0 $ such that if $E^{in} < \eps_0$, then for all $t \in [0, \min \{ T, T^*_\eps \}]$
  $$E_\eps(t) \leq \sqrt{E^{in}} < 1$$
  and $$ {\bf Q} (E_\eps(t)) \leq \frac{1}{8} \beta \, . $$
  Moreover, by the continuity of $E_\eps(t)$ we know that $T^*_\eps \geq T$. As a consequence, we obtain that for all $t \in [0,T]$
  \begin{align}\label{Bounds-3}
    |\dr^\eps - \J_\eps \dr^{in}|^2_{L^2}(t) + |\v^\epsilon|^2_{H^s}(t) + \rho_1 |\dot{\dr}^\epsilon|^2_{H^s}(t) + |\nabla  \dr^\epsilon|^2_{H^s}(t) + \frac{1}{4} \beta \int_0^t |\nabla  \v^\epsilon|^2_{H^s}(\tau) \d \tau \leq \widetilde{C}_1 (E^{in}, T) \, ,
  \end{align}
  where $\widetilde{C}_1 (E^{in}, T) =  E^{in} + 12 C_1 T \sqrt{E^{in}} > 0$. By the inequality \eqref{L^infty-Estimate} we also know that for all $\eps > 0$ and $t \in [0,T]$
  \begin{equation}\label{L^infty-Unif}
    |\dr^\eps|_{L^\infty} \leq C E_\eps^\frac{1}{2} (t) + C |\nabla \dr^{in}|_{H^s} + 1 \leq C \sqrt{\sqrt{6 E^{in}}} + C \sqrt{E^{in}} + 1 < \infty\, .
  \end{equation}

  Thus we know that there exist functions $$\v \in L^\infty(0,T;H^s)\cap L^2(0,T;H^{s+1})\, , \dr \in L^\infty(0,T;\dot{H}^{s+1})\cap L^\infty ((0,T ) \times \R^n) \, , \w \in L^\infty(0,T;H^s)$$ such that (extracting subsequence if necessary)
\begin{align}\label{Weak-Compactness-General}
   \begin{array}{l}
    \v^\epsilon \rightharpoonup \v \ \textrm{weakly* \ in\ } L^\infty(0,T;H^s)\, ,\\
    \v^\epsilon \rightharpoonup \v \ \textrm{weakly \ in\ } L^2(0,T;H^{s+1})\, ,\\
    \dr^\epsilon \rightharpoonup \dr \ \textrm{weakly* \ in\ } L^\infty(0,T;\dot{H}^{s+1})\, ,\\
    \dot{\dr}^\epsilon \rightharpoonup \w \ \textrm{weakly* \ in\ } L^\infty(0,T;H^s)\, .\\
  \end{array}
\end{align}

Then the second equation of \eqref{Appr-Syst-Simple} reduces to
\begin{equation}\label{1st-Lim-Equa}
   \div \v = 0 \, .
\end{equation}

 Noticing that $s > \frac{n}{2} + 2$, one can estimate by Lemma \ref{Basic-Properties}, the equations \eqref{Appr-Syst-Simple}, the bounds \eqref{Bounds-3} and Sobolev embedding theory that
\begin{align}\label{t-Derivative-Bounds-d}
   \no \sup_{0 \leq t \leq T} | \partial_t \dr^\epsilon |_{H^s} \leq& \sup_{0 \leq t \leq T} | \dot{ \dr}^\epsilon |_{H^s} + \sup_{0 \leq t \leq T} \left| \v^\epsilon \cdot \nabla  \dr^\epsilon \right|_{H^s} \\
     \leq& \sup_{0 \leq t \leq T} | \dot{ \dr}^\epsilon |_{H^s} + C \sup_{0 \leq t \leq T} \left( |  \v^\epsilon|_{L^\infty} | \nabla  \dr^\epsilon |_{H^s} + |\nabla  \dr^\epsilon|_{L^\infty} | \v^\epsilon|_{H^s} \right)  \\
    \no\leq& \sup_{0 \leq t \leq T} | \dot{ \dr}^\epsilon |_{H^s} + C \sup_{0 \leq t \leq T} | \v^\epsilon |_{H^s} \sup_{0 \leq t \leq T} | \nabla \dr^\epsilon |_{H^s}  \\
    \no \leq&  \widetilde{C}_1(E^{in},T)  \, ,
\end{align}
and
\begin{align}
    \no | \partial_t \v^\epsilon |_{H^{s-2}} \leq& \left| \mathcal{P} \J_\eps [ \v^\epsilon \cdot \nabla  \v^\epsilon] \right|_{H^{s-2}} + \frac{1}{2}\mu_4 \left|  \Delta   \v^\epsilon  \right|_{H^{s-2}} \\
    \no &+ \left| \mathcal{P} \J_\eps \div (  \nabla   \dr^\epsilon \odot  \nabla  \dr^\epsilon) \right|_{H^{s-2}} + \left| \mathcal{P} \J_\eps \div\tilde{\sigma}(\v^\eps, \dr^\eps, \dot{\dr}^\eps)  \right|_{H^{s-2}} \\
    \no \leq& \left|  \v^\epsilon \cdot \nabla  \v^\epsilon \right|_{H^{s-2}} + \frac{1}{4}\mu_4 \left| \v^\epsilon \right|_{H^s} + \left| \div (  \nabla   \dr^\epsilon \odot  \nabla  \dr^\epsilon) \right|_{H^{s-2}}\\
     \no &+ \left| \mathcal{P} \J_\eps \div\tilde{\sigma}(\v^\eps, \dr^\eps, \dot{\dr}^\eps)  \right|_{H^{s-2}} \\
    \no \leq& C \left| \v^\epsilon \right|_{H^s}^2 + \frac{1}{2} \mu_4 \left| \v^\epsilon \right|_{H^s} + C \left| \nabla \dr^\epsilon \right|_{H^s}^2 + \left| \mathcal{P} \J_\eps \div\tilde{\sigma}(\v^\eps, \dr^\eps, \dot{\dr}^\eps)  \right|_{H^{s-2}} \\
     \no \leq& \widetilde{C}_1(E^{in},T) + \left| \mathcal{P} \J_\eps \div\tilde{\sigma}(\v^\eps, \dr^\eps, \dot{\dr}^\eps)  \right|_{H^{s-2}} \, ,
\end{align}
where
\begin{align}
  \no &\left| \mathcal{P} \J_\eps \div\tilde{\sigma}(\v^\eps, \dr^\eps, \dot{\dr}^\eps)  \right|_{H^{s-2}} \\
  \no \leq& \mu_1  | (  \dr^\epsilon \otimes  \dr^\epsilon :  \A_\epsilon )  \dr^\epsilon \otimes  \dr^\epsilon  |_{H^{s-1}} + (\mu_2 + \mu_3)  |  \dr^\epsilon \otimes  \dot{\dr}^\epsilon  |_{H^{s-1}} \\
  \no +& (\mu_2 + \mu_3)  | \dr^\epsilon \otimes  \dot{\dr}^\epsilon :  \B_\epsilon |_{H^{s-1}} + (\mu_5 + \mu_6)  |  \dr^\epsilon \otimes  \dr^\epsilon :  \A_\epsilon |_{H^{s-1}} \\
  \no \equiv& S_1 + S_2 + S_3 + S_4 \, .
\end{align}

Now we estimate the terms $S_1,\ S_2,\ S_3$ and $S_4$. We can estimate that by \eqref{Bounds-3}, \eqref{L^infty-Unif}, H\"older inequality and Sobolev embedding
\begin{align}
  \no S_1 =& \mu_1 |   (   \dr^\eps \otimes  \dr^\eps : \A^\eps ) \dr^\eps \otimes \dr^\eps  |_{L^2} +  \mu_1 \sum_{k=1}^{s-1} \left| \nabla^k [  \dr^\eps \otimes \dr^\eps : \A^\eps ) \dr^\eps \otimes \dr^\eps ] \right|_{L^2} \\
  \no \leq& \mu_1 |\dr^\eps|^4_{L^\infty} \sum_{k=0}^{s-1} |\nabla^{k+1}  \v^\eps|_{L^2} + 24 \mu_1 |\dr^\eps|^3_{L^\infty} \sum_{k=1}^{s-1} \sum_{\substack{ a+ f = k \\ a \geq 1}} \left| |\nabla^a  \dr^\eps| |\nabla^{f+1}  \v^\eps | \right|_{L^2} \\
  \no +& 12 \mu_1 |\dr^\eps|^2_{L^\infty} \sum_{k=1}^{s-1} \sum_{\substack{ a+b+ f = k \\ a, b \geq 1}} \left| |\nabla^a  \dr^\eps| |\nabla^b  \dr^\eps | |\nabla^{f+1} \v^\eps | \right|_{L^2} \\
  \no +& 4 \mu_1 |\dr^\eps|_{L^\infty} \sum_{k=1}^{s-1} \sum_{\substack{ a+b+c+ f = k \\ a, b, c \geq 1}} \left| |\nabla^a  \dr^\eps| |\nabla^b  \dr^\eps | |\nabla^c \dr^\eps | |\nabla^{f+1}  \v^\eps | \right|_{L^2} \\
  \no +& \mu_1 \sum_{k=1}^{s-1} \sum_{\substack{ a+b+c+ e+ f = k \\ a, b, c, e \geq 1}} \left| |\nabla^a \dr^\eps| |\nabla^b \dr^\eps | |\nabla^c \dr^\eps | |\nabla^e \dr^\eps | |\nabla^{f+1}  \v^\eps | \right|_{L^2} \\
  \no \leq& \mu_1 |\dr^\eps|^4_{L^\infty} |\v^\eps|_{H^s} + 24 \mu_1 |\dr^\eps|^3_{L^\infty} \sum_{k=1}^{s-1} \sum_{\substack{ a+ f = k \\ a \geq 1}}  |\nabla^a \dr^\eps|_{L^4} |\nabla^{f+1} \v^\eps |_{L^4} \\
  \no +& 12 \mu_1 |\dr^\eps|^2_{L^\infty} \sum_{k=1}^{s-1} \sum_{\substack{ a+b+ f = k \\ a, b \geq 1}} |\nabla^a \dr^\eps|_{L^6} |\nabla^b \dr^\eps |_{L^6} |\nabla^{f+1} \v^\eps |_{L^6} \\
  \no +& 4 \mu_1 |\dr^\eps|_{L^\infty} \sum_{k=1}^{s-1} \sum_{\substack{ a+b+c+ f = k \\ a, b, c \geq 1}} |\nabla^a \dr^\eps|_{L^6} |\nabla^b \dr^\eps |_{L^6} |\nabla^c \dr^\eps |_{L^6} |\nabla^{f+1} \v^\eps |_{L^\infty} \\
  \no +& \mu_1 \sum_{k=1}^{s-1} \sum_{\substack{ a+b+c+ e+ f = k \\ a, b, c, e \geq 1}} |\nabla^a \dr^\eps|_{L^6} |\nabla^b \dr^\eps |_{L^6} |\nabla^c \dr^\eps |_{L^6} |\nabla^e \dr^\eps |_{L^\infty} |\nabla^{f+1} \v^\eps |_{L^\infty} \\
  \no \leq& \mu_1 |\dr^\eps|^4_{L^\infty} |\v^\eps|_{H^s} + C \mu_1 |\dr^\eps|^3_{L^\infty} |\nabla \dr^\eps|_{H^s} |\v^\eps |_{H^s} + C \mu_1 |\dr^\eps|^2_{L^\infty} |\nabla \dr^\eps|^2_{H^s} |\v^\eps |_{H^s} \\
  \no &+ C \mu_1 |\dr^\eps|_{L^\infty} |\nabla \dr^\eps|^3_{H^s} |\v^\eps |_{H^s} + C \mu_1 |\nabla \dr^\eps|^4_{H^s} |\v^\eps |_{H^s} \\
  \no \leq& \widetilde{C}_1(E^{in},T) \mu_1\, ,
\end{align}
and by the similar calculation on estimating the term $S_1$
$$ S_2 + S_3 + S_4 \leq \widetilde{C}_1 (E^{in}, T) (\mu_2 + \mu_3 + \mu_5 + \mu_6)\, . $$
Therefore, we have obtained the bound
\begin{equation}\label{t-Derivative-Bounds-v}
  \sup_{0 \leq t \leq T} \left| \partial_t \v^\epsilon \right|_{H^{s-2}} \leq \widetilde{C}_1(E^{in}, T) (1 + \mu_1 + \mu_2 + \mu_3 + \mu_5 + \mu_6)\, .
\end{equation}

It also can be derived from  the third equation of the system \eqref{Appr-Syst-Simple} and the bounds \eqref{Bounds-3}-\eqref{L^infty-Unif}
\begin{align}
  \no \rho_1 |\partial_t \dot{\dr}^\eps|_{H^{s-1}} \leq& \rho_1 |  \v^\eps \cdot \nabla  \dot{\dr}^\eps ] |_{H^{s-1}} + |  \Delta  \dr^\eps |_{H^{s-1}} + |\gamma(\v^\eps, \dr^\eps, \dot{\dr}^\eps) \dr^\eps |_{H^{s-1}} \\
  \no &+ |\lambda_1| |\dot{\dr}^\eps|_{H^{s-1}} + |\lambda_1| |  \B_\eps  \dr^\eps |_{H^{s-1}} + |\lambda_2| | \A_\eps \dr^\eps  |_{H^{s-1}} \\
  \no \leq& |\nabla \dr^\eps|_{H^s} + |\lambda_1| |\dot{\dr}^\eps|_{H^s} + \rho_1 | \v^\eps \cdot \nabla \dot{\dr}^\eps |_{H^{s-1}} \\
  \no &+ |\gamma(\v^\eps, \dr^\eps, \dot{\dr}^\eps) \dr^\eps|_{H^{s-1}} +  ( |\lambda_1| + |\lambda_2| ) |\nabla \v^\eps \cdot \dr^\eps|_{H^{s-1}} \\
  \no \leq& \widetilde{C}_1(E^{in} , T) +\rho_1 | \v^\eps \cdot \nabla \dot{\dr}^\eps |_{H^{s-1}} \\
  \no &+ ( |\lambda_1| + |\lambda_2| ) |\nabla \v^\eps \cdot \dr^\eps|_{H^{s-1}} + |\gamma(\v^\eps, \dr^\eps, \dot{\dr}^\eps) \dr^\eps|_{H^{s-1}} \\
  \no \equiv& \widetilde{C}_1(E^{in} , T) + K_1 + K_2 + K_3 \, .
\end{align}
Now we estimate the terms $K_1$, $K_2$ and $K_3$ by H\"older inequality, calculus inequality and Sobolev embedding:
\begin{align}
  \no K_1 =& \rho_1 | \v^\eps \cdot \nabla \dot{\dr}^\eps |_{H^{s-1}} \\
  \no \leq& C \rho_1 \left( | \v^\eps|_{H^{s-1}} |\nabla \dot{\dr}^\eps |_{L^\infty} + | \v^\eps|_{L^\infty} |\nabla \dot{\dr}^\eps|_{H^{s-1}} \right) \\
  \no \leq& C \rho_1 \left( |\v^\eps|_{H^s} |\nabla^3 \dot{\dr}^\eps|_{L^2} + |\nabla^2 \v^\eps|_{L^2} |\dot{\dr}^\eps|_{H^s} \right) \\
  \no \leq& C \rho_1 |\v^\eps|_{H^s} |\dot{\dr}^\eps|_{H^s} \\
  \no \leq& \widetilde{C}_1(E^{in} , T) \sqrt{\rho_1} \, ,
\end{align}
and
\begin{align}
  \no K_2 =& (|\lambda_1| + |\lambda_2|) |\nabla  \v^\eps \cdot  \dr^\eps|_{L^2} + ( |\lambda_1| + |\lambda_2| ) \sum_{k=1}^{s-1} \left| \nabla^k (\nabla \v^\eps \cdot \dr^\eps) \right|_{L^2} \\
  \no \leq& (|\lambda_1| + |\lambda_2|) |\dr^\eps|_{L^\infty} |\nabla \v^\eps|_{L^2} + (|\lambda_1| + |\lambda_2|) \sum_{k=1}^{s-1} \sum_{a+b=k} \left| \nabla^{a+1} \v^\eps \cdot \nabla^b \dr^\eps \right|_{L^2} \\
  \no \leq& (|\lambda_1| + |\lambda_2|) |\dr^\eps|_{L^\infty} |\nabla \v^\eps|_{L^2} + (|\lambda_1| + |\lambda_2|) |\dr^\eps|_{L^\infty} \sum_{k=1}^{s-1} |\nabla^{k+1} \v^\eps|_{L^2} \\
  \no &+ (|\lambda_1| + |\lambda_2|) \sum_{k=1}^{s-1} \sum_{\substack{a+b=k \\ b \geq 1}} \left| \nabla^{a+1} \v^\eps \cdot \nabla^b \dr^\eps \right|_{L^2} \\
  \no \leq& C (|\lambda_1| + |\lambda_2|) |\dr^\eps|_{L^\infty} |\v^\eps|_{H^s} + (|\lambda_1| + |\lambda_2|) \sum_{k=1}^{s-1} \sum_{\substack{a+b=k \\ b \geq 1}} |\nabla^{a+1} \v^\eps|_{L^4} |\nabla^b \dr^\eps|_{L^4} \\
  \no \leq& (|\lambda_1| + |\lambda_2|) |\dr^\eps|_{L^\infty} |\v^\eps|_{H^s} + C (|\lambda_1| + |\lambda_2|) \sum_{k=1}^{s-1} \sum_{\substack{a+b=k \\ b \geq 1}} |\nabla^{a+2} \v^\eps|_{L^2} |\nabla^{b+1} \dr^\eps|_{L^2} \\
  \no \leq& C (|\lambda_1| + |\lambda_2|) |\dr^\eps|_{L^\infty} |\v^\eps|_{H^s} + C (|\lambda_1| + |\lambda_2|)  |\v^\eps|_{H^s} |\nabla \dr^\eps|_{H^s} \\
  \no \leq& \widetilde{C}_1(E^{in}, T) (|\lambda_1| + |\lambda_2|) \, ,
\end{align}
and by similar calculation on estimating the term $K_2$
\begin{align}
  \no K_3 \leq & C ( |\dr^\eps|_{L^\infty} + |\nabla \dr^\eps|_{H^s} ) ( \rho_1 |\dot{\dr}^\eps|^2_{H^s} + |\nabla \dr^\eps|^2_{H^s}  ) \\
  \no &+ C |\lambda_1| |\v^\eps|_{H^s} ( |\dr^\eps|^3_{L^\infty} + |\dr^\eps|^2_{L^\infty} |\nabla \dr^\eps|_{H^s} + |\dr^\eps|_{L^\infty} |\nabla \dr^\eps |^2_{H^s} + |\nabla \dr^\eps|^3_{H^s} ) \\
  \no \leq& \widetilde{C}_1 (E^{in}, T) (1+|\lambda_1|) \, .
\end{align}
Thus the following bound holds uniformly in $\eps\,$:
\begin{equation}\label{t-Derivative-Bounds-dot-d}
  \sup_{0 \leq t \leq T} |\partial_t \dot{\dr}^\eps|_{H^{s-1}} \leq \widetilde{C}_1(E^{in},T) \tfrac{1}{\rho_1} (1+\sqrt{\rho_1}+|\lambda_1|+|\lambda_2|)\, .
\end{equation}

Thanks to the Aubin-Lions lemma, the weak convergence \eqref{Weak-Compactness-General} and the bounds \eqref{t-Derivative-Bounds-d}, \eqref{t-Derivative-Bounds-v} and \eqref{t-Derivative-Bounds-dot-d} reduce to the following convergence
\begin{align}\label{Strong-Compactness-General}
   \begin{array}{l}
    \v^\epsilon \rightarrow \v \ \textrm{strongly \ in\ } L^\infty(0,T;H^{s'})\, ,\\
    \dr^\epsilon \rightarrow \dr \ \textrm{strongly \ in\ } L^\infty(0,T;\dot{H}^{s'+1})\, ,\\
    \dot{\dr}^\epsilon \rightarrow \w \ \textrm{strongly \ in\ } L^\infty(0,T;H^{s'})\\
  \end{array}
\end{align}
for any $ 0 < s' < s$.

Assuming that $s' > \frac{n}{2}$, then $H^{s'} \hookrightarrow L^\infty$. We claim that $\J_\eps ( \v^\eps \cdot \dr^\eps)$ strongly converges to $\v \cdot \nabla \dr$ in $L^\infty(0,T;H^{s'})$ with $\v \cdot \nabla \dr \in L^\infty(0,T;H^s)$.

Indeed, by Lemma \ref{Basic-Properties}, we have
\begin{align}
  \no & |\J_\eps (  \v^\eps \cdot \nabla  \dr^\eps) - \v \cdot \nabla \dr|_{H^{s'}} \\
  \no \leq& | \J_\eps (  \v^\eps \cdot \nabla  \dr^\eps) - \J_\eps ( \v \cdot \nabla \dr^\eps) |_{H^{s'}} \\
  \no &+ | \J_\eps ( \v \cdot \nabla  \dr^\eps) - \J_\eps ( \v \cdot \nabla  \dr ) |_{H^{s'}} + | \J_\eps ( \v \cdot \nabla  \dr ) - \v \cdot \nabla  \dr  |_{H^{s'}} \\
  \no \leq& C \left( |\v^\eps - \v|_{H^{s'}} |\nabla  \dr^\eps|_{L^\infty} + |\v^\eps - \v|_{L^\infty} |\nabla   \dr^\eps|_{H^{s'}}  \right) \\
  \no &+ C \left( |\v|_{H^{s'}} |\nabla   \dr^\eps -\nabla \dr|_{L^\infty} + |\v|_{L^\infty} |\nabla  \dr^\eps -\nabla \dr|_{H^{s'}} \right) + | \J_\eps ( \v \cdot \nabla  \dr ) - \v \cdot \nabla  \dr  |_{H^{s'}} \\
  \no \leq& C |\v^\eps - \v|_{H^{s'}} |\nabla \dr^\eps|_{H^{s'}} + C |\v|_{H^{s'}} |\nabla \dr^\eps - \nabla \dr|_{H^{s'}} + | \J_\eps ( \v \cdot \nabla  \dr ) - \v \cdot \nabla  \dr  |_{H^{s'}} \\
  \no \leq& \widetilde{C}_1(E^{in},T) \left( |\v^\eps - \v|_{H^{s'}} + |\nabla  \dr^\eps - \nabla \dr|_{H^{s'}} \right) + | \J_\eps ( \v \cdot \nabla  \dr ) - \v \cdot \nabla  \dr  |_{H^{s'}} \\
  \no & \rightarrow 0 \ \textrm{as\ } \eps \rightarrow 0\, .
\end{align}
With the similar arguments in \eqref{t-Derivative-Bounds-d} we know that $\mathcal{J}_\eps ( \v^\eps \cdot \dr^\eps)$ is uniformly bounded in $L^\infty(0,T;H^s)$. Then this claim holds.

Furthermore, by \eqref{t-Derivative-Bounds-d} one knows that
\begin{equation}
  \no \partial_t \dr = \lim_{\eps \rightarrow 0 } \partial_t \dr^\eps = \lim_{ \eps \rightarrow 0} \left( \dot{\dr}^\eps - \J_\eps ( \v^\eps \cdot \nabla \dr^\eps) \right) = \w - \v \cdot \nabla \dr \in L^\infty (0,T; H^s) \, ,
\end{equation}
where the limits are considered in the sense of distribution. Consequently, it reduces to $ \partial_t \dr \in L^\infty (0,T; H^s)  $ and $ \w = \dot{\dr} \in L^\infty (0,T; H^s)$. Namely, \begin{equation}\label{2nd-Lim-Equa}
  \partial_t \dr = \dot{\dr} - \v \cdot \dr \, .
\end{equation}

Assuming that $s' \geq \frac{n}{2} + 2$, we have continuous embeddings $H^{s'} \hookrightarrow C^2$. Then the following convergence holds:
\begin{equation}\label{C2-Strong-Converge}
   \begin{array}{l}
    \v^\eps \rightarrow \v \textrm{\ strongly \ in \ } L^\infty (0,T; C^2) \, ,\\
    \dr^\eps \rightarrow \dr \textrm{\ strongly \ in \ } L^\infty(0,T; C^3) \, , \\
    \dot{\dr}^\eps \rightarrow \dot{\dr} \textrm{\ strongly \ in \ } L^\infty (0,T;C^2) \, . \\
  \end{array}
\end{equation}

Thus by the first equation of the approximate system \eqref{Appr-Syst-Simple} we observe that
\begin{align}\label{Strong-converge-v}
  \no \partial_t \v^\eps = & - \mathcal{P} \J_\eps ( \v^\eps \cdot \nabla \v^\eps ) + \frac{1}{2} \mu_4  \Delta  \v^\eps - \mathcal{P} \J_\eps \div (\nabla  \dr^\eps \odot \nabla  \dr^\eps) + \mathcal{P} \J_\eps \div \tilde{\sigma}(\v^\eps, \dr^\eps, \dot{\dr}^\eps) \\
  \rightarrow & - \mathcal{P} (\v \cdot \nabla \v) + \frac{1}{2} \mu_4 \Delta \v - \mathcal{P} \div (\nabla \dr \odot \nabla \dr) + \mathcal{P} \div \tilde{\sigma}
\end{align}
strongly in $L^\infty (0,T; C^0)$ as $\eps \rightarrow 0$, where
\begin{align}
  \no (\tilde{\sigma}(\v^\eps, \dr^\eps, \dot{\dr}^\eps) )_{ji} =&  \mu_1  \dr^\epsilon_k \dr^\epsilon_p  (\A_{kp})_\epsilon  \dr_i^\epsilon  \dr_j^\epsilon + \mu_2  \dr_j^\epsilon (  \dot{\dr}_i^\epsilon +   (\B_{ki})_\epsilon  \dr_k^\epsilon )  \\
 \no +&  \mu_3  \dr_i^\epsilon (  \dot{\dr}_j^\epsilon +  (\B_{kj})_\epsilon  \dr_k^\epsilon ) + \mu_5  \dr_j^\epsilon  \dr_k^\epsilon  (\A_{ki})_\epsilon + \mu_6 \dr_i^\epsilon  \dr_k^\epsilon  (\A_{kj})_\epsilon \\
 \no \rightarrow & \mu_1  \dr_k \dr_p \A_{kp} \dr_i \dr_j + \mu_2  \dr_j (  \dot{\dr}_i + \B_{ki} \dr_k )  \\
 \no +&  \mu_3  \dr_i (  \dot{\dr}_j + \B_{kj} \dr_k ) + \mu_5  \dr_j \dr_k \A_{ki} + \mu_6 \dr_i \dr_k \A_{kj} \\
 \no \equiv \tilde{\sigma}_{ji}
\end{align}
strongly in $L^\infty (0,T; C^1)$ as $\eps \rightarrow 0$. Recalling that $\v^\eps \rightarrow \v$ strongly in $L^\infty(0,T;H^{s'})$ and the bound \eqref{t-Derivative-Bounds-v}, we know that for all $\varphi \in C_0^\infty ((0,T) \times \R^n; \R^n)$
\begin{align}
  \no \lim_{\eps \rightarrow 0} \int_0^T \int_{\R^n} \partial_t \v^\eps \cdot \varphi \d x \d t = & - \lim_{\eps \rightarrow 0} \int_0^T \int_{\R^n} \v^\eps \cdot \partial_t \varphi \d x \d t \\
  \no =& - \int_0^T \int_{\R^n} \v \cdot \partial_t \varphi \d x \d t \\
  \no =& \int_0^T \int_{\R^n} \partial_t \v \cdot \varphi \d x \d t \, ,
\end{align}
hence
\begin{equation}\label{weak-converge-v-tDerivative}
  \lim_{\eps \rightarrow 0} \partial_t \v^\eps = \partial_t \v
\end{equation}
in the sense of distribution. Then the convergence \eqref{Strong-converge-v} and \eqref{weak-converge-v-tDerivative} imply that
\begin{align}\label{3rd-Lim-Equa}
  \partial_t \v = - \mathcal{P} (\v \cdot \nabla \v) + \frac{1}{2} \mu_4 \Delta \v - \mathcal{P} \div (\nabla \dr \odot \nabla \dr) + \mathcal{P} \div \tilde{\sigma} \in  L^\infty(0,T;C^0) \, ,
\end{align}
where $\tilde{\sigma}_{ji} = \mu_1  \dr_k \dr_p \A_{kp} \dr_i \dr_j + \mu_2  \dr_j (  \dot{\dr}_i + \B_{ki} \dr_k )   +  \mu_3  \dr_i (  \dot{\dr}_j + \B_{kj} \dr_k ) + \mu_5  \dr_j \dr_k \A_{ki} + \mu_6 \dr_i \dr_k \A_{kj}  $.

Similarly, the strong convergence \eqref{C2-Strong-Converge} and the third equation of \eqref{Appr-Syst-Simple} reduce to
\begin{align}
  \no \gamma(\v^\eps, \dr^\eps, \dot{\dr}^\eps) = & - \rho_1 |\dot{\dr}^\eps|^2 + |\nabla   \dr^\eps|^2 - \lambda_2   \dr^\eps{}^\top \A_\eps   \dr^\eps  \\
  \no \rightarrow & - \rho_1 |\dot{\dr}|^2 + |\nabla \dr|^2 - \lambda_2 \dr^\top \A \dr \equiv \gamma \in L^\infty(0,T; C^0)
\end{align}
strongly in $L^\infty(0,T; C^0)$ and $\eps \rightarrow 0$, and then
\begin{align}\label{Strong-converge-d}
  \no \rho_1 \partial_t \dot{\dr}^\eps = & - \rho_1 \J_\eps ( \v^\eps \cdot \dot{\dr}^\eps ) +  \Delta \dr^\eps  + \gamma(\v^\eps, \dr^\eps, \dot{\dr}^\eps) \dr^\eps + \lambda_1 ( \dot{\dr}^\eps - \J_\eps (\B_\eps  \dr^\eps) ) + \lambda_2 \J_\eps (\A_\eps \dr^\eps) \\
  \rightarrow & - \rho_1 \v \cdot \nabla \dot{\dr} + \Delta \dr + \gamma \dr + \lambda_1 ( \dot{\dr} - \B \dr ) + \lambda_2 \A \dr
\end{align}
strongly in $L^\infty (0,T; C^0)$ as $\eps \rightarrow 0$. It is derived from the strong convergence $\dot{\dr}^\eps \rightarrow \dot{\dr}$ in $L^\infty(0,T;H^{s'})$ and the bound \eqref{t-Derivative-Bounds-dot-d} that
\begin{equation}\label{weak-converge-dotd-tDerivative}
  \lim_{\eps \rightarrow 0} \partial_t \dot{\dr}^\eps = \partial_t \dot{\dr}
\end{equation}
in the distributional sense. Then the following equation is implied by the convergence \eqref{Strong-converge-d} and \eqref{weak-converge-dotd-tDerivative}:
\begin{equation}\label{4th-Lim-Equa}
  \rho_1 \partial_t \dot{\dr} + \rho_1 \v \cdot \nabla \dot{\dr} = \Delta \dr + \gamma \dr + \lambda_1 ( \dot{\dr} - \B \dr ) + \lambda_2 \A \dr \, .
\end{equation}
and $\partial_t \dot{\dr} \in L^\infty(0,T;C^0)$. Therefore, combining the equations \eqref{1st-Lim-Equa}, \eqref{2nd-Lim-Equa}, \eqref{3rd-Lim-Equa} and \eqref{4th-Lim-Equa}, we know that $(\v, \dr )$ satisfies the system
  \begin{align}
    \no \left\{ \begin{array}{c}
      \partial_t \v + \v \cdot \nabla \v - \frac{1}{2} \mu_4 \Delta \v + \nabla p = - \div (\nabla \dr \odot \nabla \dr) + \div \tilde{\sigma}\, , \\
      \div \v = 0\, ,\\
     \rho_1 \ddot{\dr} = \Delta \dr + \gamma \dr + \lambda_1 (\dot{\dr}- \B \dr) + \lambda_2 \A \dr\, ,
    \end{array}\right.
  \end{align}
where $\gamma = - \rho_1 |\dot{\dr}|^2 + |\nabla \dr|^2 - \lambda_2 \dr^\top \A \dr $ and $(\v , \dr)$ obeys the initial data conditions \eqref{Inital-Data} and \eqref{Inital-Data-Compatablity}. Moreover, the inequality \eqref{L^infty-Unif} implies that $|\dr|_{L^\infty} < \infty$. Then Lemma \ref{Parallel-d-Lemma} yields the geometric constraint $|\dr| = 1$.

Furthermore, by Fatou lemma, the bound \eqref{Bounds-3} implies that
\begin{equation}\no
 |\dr - \dr^{in}|^2_{L^2} (t) +  |\v|^2_{H^s}(t) + \rho_1 |\dot{\dr}|^2_{H^s}(t) + |\nabla  \dr|^2_{H^s}(t) + \frac{1}{4} \beta \int_0^t |\nabla  \v|^2_{H^s}(\tau) \d \tau \leq \widetilde{C}_1 (E^{in},T)
\end{equation}
for all $t \in [0,T]$. The proof of Part (I) of Theorem \ref{Main-Thm} is finished.

\subsection{The proof of Part (II) of Theorem \ref{Main-Thm}.} Since $\mu_1 = \mu_2 = \mu_3 = \mu_5 = \mu_6 = 0$, the inequality \eqref{Approximated-Apriori-Estimates-Special} in Remark \ref{Remark-mu=0} immediately implies that for any $\eps > 0$ and all $t \in [ 0, T_\eps )$
\begin{equation}\label{Gronwall-Inequ-2}
    \frac{\d}{\d t} \left( \ln \frac{ E_\eps^\frac{1}{2} (t) [ E_\eps (t) + 2 ]^\frac{1}{2} }{ E_\eps (t) + 1 } \right) \leq 2 C_2 \, ,
  \end{equation}
  where the constant $C_2 = C \big{(} \tfrac{1}{\sqrt{\rho_1}} + \tfrac{1}{\mu_4} + |\nabla \dr^{in}|_{H^s} \big{)} > 0$.

  We claim that for any fixed $0 < T < \frac{1}{4 C_2} \ln \frac{1}{ Y(E^{in})} $ and for all $\eps > 0$, $t \in [ 0, T ]$
  $$ E_\eps(t) \leq W(E^{in} , T) \equiv \frac{1}{\sqrt{ 1 - Y(E^{in} ) e^{4 C_2 T} }} - 1 < \infty \, , $$
  where the quantity $ Y(E^{in}) = \frac{  E^{in} ( E^{in} + 2 )  }{( E^{in} + 1 )^2} \in (0,1)$.

  Indeed, for any $t\in [0,\min \{ T, T_\eps \}]$, we integrates the inequality \eqref{Gronwall-Inequ-2} on [0, t] and then we have
  \begin{equation}\no
     \frac{ E_\eps^\frac{1}{2} (t) [ E_\eps (t) + 2 ]^\frac{1}{2} }{ E_\eps (t) + 1 } \leq \frac{ E_\eps^\frac{1}{2} (0) [ E_\eps (0) + 2 ]^\frac{1}{2} }{ E_\eps (0) + 1 } e^{2 C_2 t} \leq \sqrt{Y (E^{in} )} e^{2 C_2 T} \, .
  \end{equation}
  Thus for any fixed $\eps > 0$ and for all $t \in [0, T]$
  \begin{equation}
    \no \left[ 1 - Y(E^{in} ) e^{4 C_2 T} \right] E_\eps^2 (t) + 2 \left[ 1 - Y( E^{in} ) e^{4 C_2 T} \right] E_\eps (t) - Y( E^{in} ) e^{4 C_2 T} \leq 0 \, .
  \end{equation}
  Let $a = Y(E^{in} ) e^{4 C_2 T}  \in (0,1) $ and $ y = E_\eps(t) \geq 0 $, then
  $$ (1 -a) y^2 + 2 (1 -a) y - a \leq 0 \, . $$
   Consequently, we have
  \begin{equation}
    \no 0 \leq y = E_\eps(t) \leq \frac{1}{\sqrt{1 -a}} - 1 = \frac{1}{\sqrt{ 1 - Y(E^{in} ) e^{4 C_2 T} }} - 1 \, .
  \end{equation}
  Thus the claim is justified.

We claim that $ T_\eps \geq T$. Indeed, if $ T_\eps < T$, by the continuity of $E_\eps(t)$ we know that $E_\eps(T_\eps) < \infty$. Then we can extend the solutions constructed in Lemma \ref{Local-Existence-Approximated}, which contradicts with the fact that $[0, T_\eps)$ is the maximal interval for the existence of the solutions. So $T_\eps \geq T$.

In fact, by the arbitrariness of $T$ we know that $T_\eps \geq \frac{1}{4 C_2} \ln \frac{1}{ Y(E^{in})}$. As a consequence, if $E^{in} < \infty$, then there exists a $ 0 < T < \frac{1}{4 C_2} \ln \frac{1}{ Y(E^{in})} $ such that for any $\eps > 0$ and for all $t \in [0,T]$
 \begin{align}\label{H^s-Bounds}
   \no &|\dr^\eps(t) - \J_\eps \dr^{in}|^2_{L^2} + |\v^\epsilon(t)|^2_{H^s} + \rho_1 |\dot{\dr}^\epsilon(t)|^2_{H^s} + |\nabla  \dr^\epsilon(t)|^2_{H^s} + \frac{1}{2}\mu_4 \int_0^t |\nabla   \v^\epsilon|^2_{H^s} (\tau) \d \tau \leq \widetilde{C}_2 (E^{in},T)\, , \\
  &|\dr^\eps|_{L^\infty} \leq 1 + C \sqrt{E^{in}} + C \sqrt{W(E^{in},T)}\, ,
 \end{align}
where $\widetilde{C}_2 (E^{in},T) = E^{in} + 2 C_2 T W(E^{in},T) [W(E^{in},T) + 1 ] [ W(E^{in},T) + 2 ] > 0 $. Using the similar arguments of passing to the limit in the proof of Part (I) of Theorem \ref{Main-Thm}, we show that $(\v, \dr)$ is a strong solution to the hyperbolic-type system \eqref{Hyperbolic-Liquid-Crystal-Model}. Moreover, Fatou lemma and the bounds \eqref{H^s-Bounds} yield that the solution $(\v, \dr)$ satisfies $|\dr|_{L^\infty} < \infty$ and the following inequality holds for all $t \in [0,T]$
\begin{equation}\no
  |\dr(t) - \dr^{in}|^2_{L^2} + |\v(t)|^2_{H^s} + \rho_1 |\dot{\dr}(t)|^2_{H^s} + |\nabla \dr(t)|^2_{H^s} + \frac{1}{2}\mu_4 \int_0^t |\nabla  \v|^2_{H^s} (\tau) \d \tau \leq \widetilde{C}_2 (E^{in},T) \, .
\end{equation}
Then we finish the proof of Part (II) of Theorem \ref{Main-Thm}.

\subsection{The proof of Part (III) of Theorem \ref{Main-Thm}.} Since $\mu_2 < \mu_3$, we have $- \lambda_1 = |\lambda_1|= \mu_3 - \mu_2 > 0$. Assume that $(\v,\dr)$ is the solution constructed in Part (I) of Theorem \ref{Main-Thm} with the geometric constraint $|\dr|=1$.

  Next we construct another apriori estimate. For all $1 \leq k \leq s $, we take $\nabla^k$ in the third direction equation of the parabolic-hyperbolic model \eqref{Parabolic-Hyperbolic-Liquid-Crystal-Model}, multiply by $\nabla^k \dr$ and then we get
  \begin{equation}\no
    \rho_1 \left\l \nabla^k \ddot{\dr} , \nabla^k \dr \right\r = \left\l \nabla^k \Delta \dr , \nabla^k \dr \right\r + \left\l \nabla^k (\gamma \dr), \nabla^k \dr \right\r + \lambda_1 \left\l \nabla^k ( \dot{\dr} - \B \dr ), \nabla^k \dr \right\r + \left\l \nabla^k (\A \dr), \nabla^k \dr \right\r \, .
  \end{equation}
  By simple calculation one deduces
  \begin{align}
    \no \left\l \nabla^k \ddot{\dr} , \nabla^k \dr \right\r =& \left\l \partial_t \nabla^k \dot{\dr}, \nabla^k \dr \right\r + \left\l \nabla^k (\v \cdot \nabla \dot{\dr}), \nabla^k \dr \right\r \\
    \no =& \frac{\d}{ \d t} \left\l \nabla^k \dot{\dr} , \nabla^k \dr \right\r - \left\l \nabla^k \dot{\dr} , \partial_t \nabla^k \dr \right\r \\
    \no &+ \left\l \v \cdot \nabla \nabla^k \dot{\dr}, \nabla^k \dr \right\r + \sum_{\substack{a+b=k \\ a \geq 1}} \left\l \nabla^a \v \nabla^{b+1} \dot{\dr} , \nabla^k \dr \right\r \\
    \no =& \frac{\d}{ \d t} \left\l \nabla^k \dot{\dr} , \nabla^k \dr \right\r - \left\l \nabla^k \dot{\dr} , \partial_t \nabla^k \dr \right\r \\
    \no &- \left\l \v \cdot \nabla \nabla^k \dr , \nabla^k \dot{\dr} \right\r + \sum_{\substack{a+b=k \\ a \geq 1}} \left\l \nabla^a \v \nabla^{b+1} \dot{\dr} , \nabla^k \dr \right\r \\
    \no =& \frac{\d}{ \d t} \left\l \nabla^k \dot{\dr} , \nabla^k \dr \right\r - \left\l \nabla^k \dot{\dr} , \partial_t \nabla^k \dr \right\r - \left\l \nabla^k \dot{\dr}, \nabla^k (\v \cdot \nabla \dr) \right\r \\
    \no &+ \sum_{\substack{a+b=k \\ a \geq 1}} \left\l \nabla^a \v \nabla^{b+1} \dr , \nabla^k \dot{\dr} \right\r + \sum_{\substack{a+b=k \\ a \geq 1}} \left\l \nabla^a \v \nabla^{b+1} \dot{\dr} , \nabla^k \dr \right\r \\
    \no =& \frac{1}{2} \frac{\d}{ \d t} \left( |\nabla^k (\dot{\dr} - \dr)|^2_{L^2} - |\nabla^k \dot{\dr}|^2_{L^2} - |\nabla^k \dr|^2_{L^2} \right) - |\nabla^k \dot{\dr}|^2_{L^2} \\
    \no &+ \sum_{\substack{a+b=k \\ a \geq 1}} \left\l \nabla^a \v \nabla^{b+1} \dr , \nabla^k \dot{\dr} \right\r + \sum_{\substack{a+b=k \\ a \geq 1}} \left\l \nabla^a \v \nabla^{b+1} \dot{\dr} , \nabla^k \dr \right\r\, ,
  \end{align}
  and
\begin{equation}
  \no \left\l \nabla^k \Delta \dr , \nabla^k \dr \right\r = - |\nabla^{k+1} \dr|^2_{L^2} \, ,
\end{equation}
and by the fact $\mu_2 < \mu_3$, i.e. $- \lambda_1 = |\lambda_1|= \mu_3 - \mu_2 > 0$
\begin{align}
  \no \lambda_1 \left\l \nabla^k (\dot{\dr} - \B \dr), \nabla^k \dr \right\r =& \lambda_1 \left\l \nabla^k \partial_t \dr, \nabla^k \dr \right\r + \lambda_1 \left\l \nabla^k (\v \cdot \nabla \dr), \nabla^k \dr \right\r \\
  \no = & - \frac{1}{2} |\lambda_1| \frac{\d}{\d t} |\nabla^k \dr|^2_{L^2} + \lambda_1 \left\l \nabla^k (\v \cdot \nabla \dr), \nabla^k \dr \right\r \, .
\end{align}
Then we have
  \begin{align}\label{Multiply-d-H^k-1}
    \no &\frac{1}{2} \frac{\d}{\d t} \left( \rho_1 |\nabla^k (\dot{\dr} + \dr)|^2_{L^2} +( |\lambda_1|-\rho_1) |\nabla^k \dr|^2_{L^2} - \rho_1 |\nabla^k \dot{\dr}|^2_{L^2} \right) - \rho_1 |\nabla^k \dot{\dr}|^2_{L^2} + |\nabla^{k+1} \dr|^2_{L^2} \\
    \no =& - \rho_1 \sum_{\substack{a+b=k \\ a \geq 1}} \left\langle \nabla^k \dot{\dr}, \nabla^a \v \nabla^{b+1} \dr \right\rangle - \rho_1 \sum_{\substack{a+b=k \\ a \geq 1}} \left\langle \nabla^a \v \nabla^{b+1} \dot{\dr}, \nabla^k \dr \right\rangle \\
    +& \left\langle \nabla^k(\gamma \dr), \nabla^k \dr \right\rangle + \lambda_1 \left\langle \nabla^k (\v \cdot \nabla \dr - \B \dr) , \nabla^k \dr \right\rangle + \lambda_2 \left\langle \nabla^k (\A \dr) , \nabla^k \dr \right\rangle \\
    \no \equiv& M_1 + M_2 + M_3 + M_4 + M_5\, ,
  \end{align}
  where $\gamma = - \rho_1 |\dot{\dr}|^2 + |\nabla \dr|^2  - \lambda_2 \dr^\top \A \dr\,$. By using the Sobolev embedding $W^{1,2} \hookrightarrow L^4$ and $W^{2,2} \hookrightarrow L^\infty$ for $n = 2,3$ and H\"older inequality, we have
  \begin{align}\label{Multiply-d-H^k-2}
    \no M_1 + M_2 \leq& \rho_1 \sum_{\substack{a+b=k \\ a \geq 1}} |\nabla^k \dot{\dr}|_{L^2} |\nabla^a \v|_{L^4} |\nabla^{b+1} \dr|_{L^4} \\
    \no &+ \rho_1 \sum_{\substack{a+b=k \\ a \geq 2}} |\nabla^k \dr|_{L^2} |\nabla^a \v|_{L^4} |\nabla^{b+1} \dot{\dr}|_{L^4} + \rho_1 |\nabla \v|_{L^\infty} |\nabla^k \dot{\dr}|_{L^2} |\nabla^k \dr|_{L^2} \\
    \leq& C \rho_1 |\nabla^k \dot{\dr}|_{L^2} |\nabla^{k+1} \v|_{L^2} |\nabla^{k+1} \dr|_{L^2} + C \rho_1 |\nabla^k \dot{\dr}|_{L^2} |\nabla^{k+1} \v|_{L^2} |\nabla^{k} \dr|_{L^2} \\
    \no & \qquad+ C \rho_1 |\nabla^3 \v|_{L^2} |\nabla^k \dot{\dr}|_{L^2} |\nabla^k \dr|_{L^2} \\
    \no \leq& C \rho_1 |\dot{\dr}|_{H^s} |\nabla \dr|_{H^s} |\nabla \v|_{H^s} \, ,
  \end{align}
  and
  \begin{align}\label{Multiply-d-H^k-3}
    \no M_4 + M_5 = & \lambda_1 \sum_{\substack{ a+b=k \\ a \geq 1 }} \left\l \nabla^a \v \nabla^{b+1} \dr , \nabla^k \dr \right\r + \left\l \lambda_1 \nabla^{k-1} (\B \dr) - \lambda_2 \nabla^{k-1} (\A \dr) , \nabla^{k+1} \dr \right\r \\
    \no \leq& |\lambda_1| \sum_{\substack{ a+b=k \\ a \geq 1 }} |\nabla^a \v|_{L^4} |\nabla^{b+1} \dr|_{L^2} |\nabla^k \dr|_{L^4} + ( |\lambda_1| + |\lambda_2| ) \sum_{a+b=k-1} \left\l |\nabla^{a+1} \v| |\nabla^b \dr|, |\nabla^{k+1} \dr| \right\r \\
    \no \leq& C |\lambda_1| \sum_{\substack{ a+b=k \\ a \geq 1 }} |\nabla^{a+1} \v|_{L^2} |\nabla^{b+1} \dr|_{L^2} |\nabla^{k+1} \dr|_{L^2} \\
    \no +& ( |\lambda_1| + |\lambda_2| ) \sum_{\substack{a+b=k-1 \\ b \geq 1}} |\nabla^{a+1} \v|_{L^4} |\nabla^b \dr|_{L^4} |\nabla^{k+1} \dr|_{L^2} + ( |\lambda_1| + |\lambda_2| ) |\nabla^k \v|_{L^2} |\nabla^{k+1} \dr|_{L^2} \\
     \leq& ( |\lambda_1| + |\lambda_2| ) |\nabla^k \v|_{L^2} |\nabla^{k+1} \dr|_{L^2} + C |\lambda_1| |\nabla \v|_{H^s} |\nabla \dr|_{\dot{H}^s} |\nabla \dr|_{H^s} \\
     \no +& C ( |\lambda_1| + |\lambda_2| ) |\nabla \v|_{H^s} |\nabla \dr|_{\dot{H}^s} |\nabla \dr|_{H^s} \\
     \no \leq& ( |\lambda_1| + |\lambda_2| ) |\nabla^k \v|_{L^2} |\nabla^{k+1} \dr|_{L^2} + C ( |\lambda_1| + |\lambda_2| ) |\nabla \v|_{H^s} |\nabla \dr|_{\dot{H}^s} |\nabla \dr|_{H^s} \, ,
  \end{align}
  and
  \begin{align}\label{Multiply-d-H^k-4}
    \no M_3 =& - \rho_1 \sum_{\substack{a+b+c=k}} \left\l \nabla^a \dot{\dr} \nabla^b \dot{\dr} \nabla^c \dr , \nabla^k \dr \right\r\\
      +& \sum_{a+b+c=k} \left\l \nabla^{a+1} \dr \nabla^{b+1} \dr \nabla^c \dr , \nabla^k \dr \right\r +  \lambda_2 \left\l \nabla^{k-1} [ (\dr^\top \A \dr) \dr ], \nabla^{k+1} \dr \right\r \\
     \no \equiv& M_{31} + M_{32} + M_{33}\ ,
  \end{align}
  where
  \begin{align}\label{Multiply-d-H^k-5}
    \no M_{31} =& - \rho_1 \sum_{\substack{ a+b+c=k \\ c \geq 1 }} \left\l \nabla^a \dot{\dr} \nabla^b \dot{\dr} \nabla^c \dr , \nabla^k \dr \right\r - \rho_1 \sum_{a+b=k} \left\l \nabla^a \dot{\dr} \nabla^b \dot{\dr} , \nabla^k \dr \right\r \\
     \no \leq& \rho_1 \sum_{\substack{ a+b+c=k \\ c \geq 1 }} |\nabla^a \dot{\dr}|_{L^4} |\nabla^b \dot{\dr}|_{L^4} |\nabla^c \dr|_{L^4} |\nabla^k \dr|_{L^4} \\
     \no +& 2 \rho_1 |\dot{\dr}|_{L^4} |\nabla^k \dot{\dr}|_{L^2} |\nabla^k \dr|_{L^4} + \rho_1 \sum_{\substack{ a+b=k \\ a,b \geq 1 }} |\nabla^a \dot{\dr}|_{L^2} |\nabla^b \dot{\dr}|_{L^4} |\nabla^k \dr|_{L^4} \\
     \leq& C \rho_1 \sum_{\substack{ a+b+c=k \\ c \geq 1 }} |\nabla^{a+1} \dot{\dr}|_{L^2} |\nabla^{b+1} \dot{\dr}|_{L^2} |\nabla^{c+1} \dr|_{L^2} |\nabla^{k+1} \dr|_{L^2} \\
     \no +& C \rho_1 | \nabla \dot{\dr}|_{L^2} |\nabla^{k} \dot{\dr}|_{L^2} |\nabla^{k+1} \dr|_{L^2} + C \rho_1 \sum_{\substack{ a+b=k \\ a,b \geq 1 }} |\nabla^a \dot{\dr}|_{L^2} |\nabla^{b+1} \dot{\dr}|_{L^2} |\nabla^{k+1} \dr|_{L^2} \\
     \no \leq& C \rho_1 |\nabla^k \dot{\dr}|^2_{L^2} |\nabla^{k+1} \dr|^2_{L^2} + C \rho_1 |\nabla^k \dot{\dr}|^2_{L^2} |\nabla^{k+1} \dr|_{L^2} \\
     \no \leq& C \rho_1 |\dot{\dr}|^2_{H^s} ( | \nabla \dr |^2_{H^s} + |\nabla \dr|_{H^s} ) \, ,
  \end{align}
  and
  \begin{align}\label{Multiply-d-H^k-6}
    \no M_{32} =& \sum_{a+b=k} \left\l \nabla^{a+1} \dr \nabla^{b+1} \dr \dr , \nabla^k \dr \right\r + \sum_{\substack{a+b+c=k \\ c \geq 1}} \left\l \nabla^{a+1} \dr \nabla^{b+1} \dr \nabla^c \dr , \nabla^k \dr \right\r \\
    \no \leq& 2 |\nabla \dr|_{L^4} |\nabla^{k+1} \dr|_{L^2} |\nabla^k \dr|_{L^4} + \ \sum_{\substack{a+b=k \\ a,b \geq 1}} |\nabla^{a+1} \dr|_{L^3} |\nabla^{b+1} \dr|_{L^3} |\nabla^k \dr|_{L^3} \\
    \no +& \sum_{\substack{a+b+c=k \\ c \geq 1}} |\nabla^{a+1} \dr|_{L^4} |\nabla^{b+1} \dr|_{L^4} |\nabla^c \dr|_{L^4} |\nabla^k \dr|_{L^4} \\
    \leq& C |\nabla^2 \dr|_{L^2} |\nabla^{k+1} \dr|^2_{L^2} + C \sum_{\substack{ a+b=k \\ a,b \geq 1 }} |\nabla^{a+2} \dr|_{L^2} |\nabla^{b+2} \dr|_{L^2} |\nabla^{k+1} \dr|_{L^2} \\
    \no +& C \sum_{\substack{a+b+c=k \\ c \geq 1}} |\nabla^{a+2} \dr|_{L^2} |\nabla^{b+2} \dr|_{L^2} |\nabla^{c+1} \dr|_{L^2} |\nabla^{k+1} \dr|_{L^2} \\
    \no \leq& C |\nabla^{k+1} \dr|^3_{L^2} + C |\nabla^{k+1} \dr|^4_{L^2} \\
    \no \leq& C ( |\nabla \dr|^3_{\dot{H}^s} + |\nabla \dr|^4_{\dot{H}^s} ) \, ,
  \end{align}
  and
  \begin{align}\label{Multiply-d-H^k-7}
     \no M_{33} \leq& |\lambda_2| \sum_{\substack{a+b+c+e=k-1 \\ a,b,c \geq 1}} \left\l |\nabla^a \dr| |\nabla^b \dr| |\nabla^c \dr| |\nabla^{e+1} \v|, |\nabla^{k+1} \dr| \right\r \\
    \no +& 3 |\lambda_2| \sum_{\substack{a+b+e=k-1 \\ a,b \geq 1}} \left\l |\nabla^a \dr| |\nabla^b \dr| |\nabla^{e+1} \v|, |\nabla^{k+1} \dr| \right\r \\
    \no +& 6 |\lambda_2| \sum_{\substack{a+e=k-1 \\ a \geq 1}} \left\l |\nabla^a \dr| |\nabla^{e+1} \v|, |\nabla^{k+1} \dr| \right\r + 6 |\lambda_2| \left\l |\nabla^k \v| , |\nabla^{k+1} \dr| \right\r \\
    \no \leq& \lambda_2| \sum_{\substack{a+b+c+e=k-1 \\ a,b,c \geq 1}}  |\nabla^a \dr|_{L^6} |\nabla^b \dr|_{L^6} |\nabla^c \dr|_{L^6} |\nabla^{e+1} \v|_{L^\infty} |\nabla^{k+1} \dr|_{L^2} \\
     \no+& 3 |\lambda_2| \sum_{\substack{a+b+e=k-1 \\ a,b \geq 1}} |\nabla^a \dr|_{L^6} |\nabla^b \dr|_{L^6} |\nabla^{e+1} \v|_{L^6} |\nabla^{k+1} \dr|_{L^2} \\
    \no +& 6 |\lambda_2| \sum_{\substack{a+e=k-1 \\ a \geq 1}} |\nabla^a \dr|_{L^4} |\nabla^{e+1} \v|_{L^4} |\nabla^{k+1} \dr|_{L^2} + 6 |\lambda_2| |\nabla^k \v|_{L^2} |\nabla^{k+1} \dr|_{L^2} \\
     \leq& C \lambda_2| \sum_{\substack{a+b+c+e=k-1 \\ a,b,c \geq 1}}  |\nabla^{a+1} \dr|_{L^2} |\nabla^{b+1} \dr|_{L^2} |\nabla^{c+1} \dr|_{L^2} |\nabla^{e+3} \v|_{L^2} |\nabla^{k+1} \dr|_{L^2} \\
    \no +& C |\lambda_2| \sum_{\substack{a+b+e=k-1 \\ a,b \geq 1}} |\nabla^{a+1} \dr|_{L^2} |\nabla^{b+1} \dr|_{L^2} |\nabla^{e+2} \v|_{L^2} |\nabla^{k+1} \dr|_{L^2} \\
    \no +& C |\lambda_2| \sum_{\substack{a+e=k-1 \\ a \geq 1}} |\nabla^{a+1} \dr|_{L^2} |\nabla^{e+2} \v|_{L^2} |\nabla^{k+1} \dr|_{L^2} + 6 |\lambda_2| |\nabla^k \v|_{L^2} |\nabla^{k+1} \dr|_{L^2} \\
    \no \leq& C |\lambda_2| |\nabla^{k+1} \v|_{L^2}  ( |\nabla^{k+1} \dr|^2_{L^2} + |\nabla^{k+1} \dr|^3_{L^2} + |\nabla^{k+1} \dr|^4_{L^2} ) + 6 |\lambda_2| |\nabla^k \v|_{L^2} |\nabla^{k+1} \dr|_{L^2} \\
    \no \leq& 6 |\lambda_2| |\nabla^k \v|_{L^2} |\nabla^{k+1} \dr|_{L^2} + C |\lambda_2| |\nabla \v|_{H^s} |\nabla \dr|_{\dot{H}^s} ( |\nabla \dr|_{H^s} + |\nabla \dr|^2_{H^s} + |\nabla \dr|^3_{H^s} ) \, .
  \end{align}

  It is immediately implied that by substituting \eqref{Multiply-d-H^k-2}, \eqref{Multiply-d-H^k-3}, \eqref{Multiply-d-H^k-4}, \eqref{Multiply-d-H^k-5}, \eqref{Multiply-d-H^k-6} and \eqref{Multiply-d-H^k-7} into \eqref{Multiply-d-H^k-1}
  \begin{align}
    \no & \frac{1}{2} \frac{\d}{\d t} \left( \rho_1 |\nabla^k (\dot{\dr} + \dr)|^2_{L^2} +( |\lambda_1|-\rho_1) |\nabla^k \dr|^2_{L^2} - \rho_1 |\nabla^k \dot{\dr}|^2_{L^2} \right) - \rho_1 |\nabla^k \dot{\dr}|^2_{L^2} + |\nabla^{k+1} \dr|^2_{L^2} \\
    \no\leq& ( |\lambda_1| - 7 \lambda_2 ) |\nabla^k \v|_{L^2} |\nabla^{k+1} \dr|_{L^2} + C \rho_1 |\dot{\dr}|_{H^s} |\nabla \v|_{H^s} |\nabla \dr|_{H^s} \\
    \no +& C (1+ \rho_1 ) ( |\dot{\dr}|^2_{H^s} + |\nabla \dr|^2_{\dot{H}^s} ) ( |\nabla \dr|_{H^s} + |\nabla \dr|^2_{H^s} ) \\
    \no +& C ( |\lambda_1| -\lambda_2 ) |\nabla \v|_{H^s} |\nabla \dr|_{\dot{H}^s} ( |\nabla \dr|_{H^s} + |\nabla \dr|^2_{H^s} + |\nabla \dr|^3_{H^s} )
  \end{align}
  for all $1 \leq k \leq s$. Then it is derived that by summing up the above inequalities for $1 \leq k \leq s$
  \begin{align}\label{Multiply-d-H^s}
    \no & \frac{1}{2} \frac{\d}{\d t} \left( \rho_1 |\dot{\dr} + \dr|^2_{\dot{H}^s} +( |\lambda_1|-\rho_1 ) | \dr|^2_{\dot{H}^s} - \rho_1 | \dot{\dr}|^2_{\dot{H}^s} \right) - \rho_1 | \dot{\dr}|^2_{\dot{H}^s} + |\nabla \dr|^2_{\dot{H}^s} \\
    \no\leq& ( |\lambda_1| - 7 \lambda_2 ) |\nabla \v|_{H^s} |\nabla \dr|_{\dot{H}^s} + C \rho_1 |\dot{\dr}|_{H^s} |\nabla \v|_{H^s} |\nabla \dr|_{H^s} \\
     +& C (1+ \rho_1 ) ( |\dot{\dr}|^2_{H^s} + |\nabla \dr|^2_{\dot{H}^s} ) ( |\nabla \dr|_{H^s} + |\nabla \dr|^2_{H^s} ) \\
    \no +& C ( |\lambda_1| -\lambda_2 ) |\nabla \v|_{H^s} |\nabla \dr|_{\dot{H}^s} ( |\nabla \dr|_{H^s} + |\nabla \dr|^2_{H^s} + |\nabla \dr|^3_{H^s} ) \, .
  \end{align}

  Taking a positive constant $\eta = \frac{1}{2} \min \big{\{} 1 , \frac{1}{\rho_1} , \frac{ |\lambda_1|}{\rho_1} \big{\}} \in (0,\frac{1}{2} ]$, we multiply by $\eta$ in the inequality \eqref{Multiply-d-H^s} and then add it to the inequality \eqref{H^s-Estimate-global} (dropped the index $\eps$), so that we know that
  \begin{align}\label{Global-Bounds-H^s}
     \no & \frac{1}{2} \frac{\d}{\d t} \left( |\v|^2_{H^s} + \rho_1 (1-\eta) |\dot{\dr}|^2_{H^s} + (1-\eta \rho_1 ) |\nabla \dr|^2_{H^s} \right. \\
     \no &\qquad+\left. \eta \rho_1 |\dot{\dr} + \dr|^2_{\dot{H}^2} + \eta \rho_1 |\nabla^{s+1} \dr|^2_{L^2} + \eta \rho_1 |\lambda_1| |\dr|^2_{\dot{H}^s} \right) \\
     \no +& \frac{1}{2} \mu_4 |\nabla \v|^2_{H^s} + ( |\lambda_1| - \eta \rho_1 ) |\dot{\dr}|^2_{H^s} +  \eta |\nabla \dr|^2_{\dot{H}^s} + \eta \rho_1 |\dot{\dr}|^2_{L^2} + \mu_1 \sum_{k=0}^s |\dr^\top  (\nabla^{k+1} \v) \dr|^2_{L^2} \\
      \leq& 2 \mu_6 |\nabla \v|^2_{H^s} + (|\lambda_1| - 7 \lambda_2) |\nabla \v|_{H^s} |\nabla \dr|_{\dot{H}^s} + ( 7 |\lambda_1| - 2 \lambda_2 ) |\nabla \v|_{H^s} |\dot{\dr}|_{H^s} \\
     \no +& C' ( 1 + \mu_1 + |\lambda_1| - \lambda_2 + \mu_6 - \rho_1 \lambda_2 + \rho_1 + \tfrac{1}{\sqrt{\rho_1}} ) \Big{(}  |\v|_{H^s} + |\dot{\dr}|_{H^s} + \sum_{i=1}^4 |\nabla \dr|^i_{H^s} \Big{)} \\
      \no & \quad \times \big{(} |\nabla \v|_{H^s} + |\dot{\dr}|_{H^s} + |\nabla \dr|_{\dot{H}^s} \big{)} |\nabla \v|_{H^s} \, ,
  \end{align}
  where the constant $C' = C' (n,s) > 0$.

  Notice that $|\lambda_1| - \eta \rho_1 \geq \frac{1}{2} |\lambda_1|$ and
  \begin{align}
    \no ( |\lambda_1| - 7 \lambda_2 ) |\nabla \v|_{H^s} |\nabla \dr|_{\dot{H}^s} \leq& \frac{1}{2} \eta |\nabla \dr|^2_{\dot{H}^s} + \tfrac{ (|\lambda_1| - 7 \lambda_2)^2 }{2 \eta} |\nabla \v|^2_{H^s} \, , \\
    \no ( 7 |\lambda_1| - 2 \lambda_2 ) |\nabla \v|_{H^s} |\dot{\dr}|_{H^s} \leq& \frac{1}{4} |\lambda_1| |\dot{\dr}|^2_{H^s} + \tfrac{ ( 7 |\lambda_1| - 2\lambda_2 )^2 }{ |\lambda_1| } |\nabla \v|^2_{H^s} \, .
  \end{align}
  Then the inequality \eqref{Global-Bounds-H^s} reduces to
  \begin{align}
     \no & \frac{1}{2} \frac{\d}{\d t} \left( |\v|^2_{H^s} + \rho_1 (1-\eta) |\dot{\dr}|^2_{H^s} + (1-\eta \rho_1 ) |\nabla \dr|^2_{H^s} \right. \\
     \no &\qquad+\left. \eta \rho_1 |\dot{\dr} + \dr|^2_{\dot{H}^2} + \eta \rho_1 |\nabla^{s+1} \dr|^2_{L^2} + \eta \rho_1 |\lambda_1| |\dr|^2_{\dot{H}^s} \right) \\
     \no +& \frac{1}{2} \alpha |\nabla \v|^2_{H^s} + ( |\lambda_1| - \eta \rho_1 ) |\dot{\dr}|^2_{H^s} +  \eta |\nabla \dr|^2_{\dot{H}^s}  \\
      \no \leq&  C' ( 1 + \mu_1 + |\lambda_1| - \lambda_2 + \mu_6 - \rho_1 \lambda_2 + \rho_1 + \tfrac{1}{\sqrt{\rho_1}} ) \Big{(}  |\v|_{H^s} + |\dot{\dr}|_{H^s} + \sum_{i=1}^4 |\nabla \dr|^i_{H^s} \Big{)} \\
      \no & \quad \times \big{(} |\nabla \v|_{H^s} + |\dot{\dr}|_{H^s} + |\nabla \dr|_{\dot{H}^s} \big{)} |\nabla \v|_{H^s} \, ,
  \end{align}
  where $\alpha = \mu_4 - 4 \mu_6 - \tfrac{ (|\lambda_1| - 7 \lambda_2)^2 }{\eta} - \tfrac{ 2 ( 7 |\lambda_1| - 2\lambda_2 )^2 }{ |\lambda_1| } > 0 $. We denote by $$\mathcal{E}(t) \equiv |\v|^2_{H^s} + \rho_1 (1-\eta) |\dot{\dr}|^2_{H^s} + (1-\eta \rho_1 ) |\nabla \dr|^2_{H^s} + \eta \rho_1 |\dot{\dr} + \dr|^2_{\dot{H}^s} + \eta \rho_1 |\nabla^{s+1} \dr|^2_{L^2} + \eta \rho_1 |\lambda_1| |\dr|^2_{\dot{H}^s}$$ and $$ \mathcal{D} (t) \equiv  |\nabla \v|^2_{H^s} +   |\dot{\dr}|^2_{H^s} +  |\nabla \dr|^2_{\dot{H}^s}\, ,$$
  then the following inequality holds :
  \begin{equation}\label{Global-Bounds}
    \frac{\d}{\d t} \mathcal{E}(t) + \theta \mathcal{D}(t) \leq C_3 \sum_{q=1}^4 [\mathcal{E} (t) ]^\frac{q}{2} \mathcal{D} (t)\, ,
  \end{equation}
  where the constant $ C_3 = 4 C' \big{(} 1 + \frac{1}{\sqrt{\rho_1}} \big{)}  \big{(}1+\mu_1 + |\lambda_1| - \lambda_2 + \mu_6 - \rho_1 \lambda_2 + \rho_1 + \frac{1}{\sqrt{\rho_1}} \big{)} > 0 $ and $\theta = \min \big{\{} \alpha, \eta , \frac{1}{2} |\lambda_1| \big{\}} > 0$.

  If $E^{in} \leq \frac{1}{|\lambda_1| + 2} \min \{ 1, \frac{\theta^2}{(8 C_3)^2} \}$, then we have
  \begin{align}
    \no \mathcal{E}(0) =& |\v^{in}|^2_{H^s} + \rho_1 (1-\eta) |{\tilde\dr}^{in}|^2_{H^s} + (1-\eta \rho_1 ) |\nabla \dr^{in}|^2_{H^s} \\
    \no &+ \eta \rho_1 |{\tilde\dr}^{in} + \dr^{in}|^2_{\dot{H}^s} + \eta \rho_1 |\nabla^{s+1} \dr^{in}|^2_{L^2} + \eta \rho_1 |\lambda_1| |\dr^{in}|^2_{\dot{H}^s}\\
    \no \leq& |\v^{in}|^2_{H^s} + \rho_1 (1-\eta) |{\tilde\dr}^{in}|^2_{H^s} + (1-\eta \rho_1 ) |\nabla \dr^{in}|^2_{H^s} \\
    \no &+ 2 \eta \rho_1 |{\tilde\dr}^{in}|^2_{H^s} + 2 \eta \rho_1 |\nabla \dr^{in}|^2_{H^s} + \eta \rho_1 |\nabla \dr^{in}|^2_{H^s} + \eta \rho_1 |\lambda_1| |\nabla \dr^{in}|^2_{H^s}\\
    \no \leq& |\v^{in}|^2_{H^s} + \tfrac{3}{2} \rho_1 |{\tilde\dr}^{in}|^2_{H^s} + ( \tfrac{1}{2} |\lambda_1| + 2 ) |\nabla \dr^{in}|^2_{H^s} \\
    \no \leq& ( |\lambda_1|  + 2 ) ( |\v^{in}|^2_{H^s} +  \rho_1 |\dot{\dr}|^2_{H^s} +  |\nabla \dr^{in}|^2_{H^s} ) \\
    \no =& ( |\lambda_1| + 2 ) E^{in} \leq 1\, ,
  \end{align}
  which implies that
  \begin{equation}\no
    C_3 \sum_{q =1}^4 [ \mathcal{E} (0) ]^\frac{q}{2} \leq 4 C_3 \sqrt{\mathcal{E}(0)} \leq 4 C_3 \sqrt{ (|\lambda_1| + 2) E^{in} } \leq \frac{1}{2} \theta \, .
  \end{equation}

  We define $T^* = \sup \big{\{} T > 0; C_3 \sum\limits_{q=1}^4 [\mathcal{E} (t) ]^\frac{q}{2} \leq \theta \ \textrm{ holds\ for\ all} \ t \in [0,T] \big{\}} $. By the continuity of $\mathcal{E}(t)$ we know that $T^* > 0$. Then for all $t \in [0,T^*]$
  $$ \frac{\d}{\d t} \mathcal{E}(t) + \left( \theta - C_3 \sum\limits_{q=1}^4 [\mathcal{E} (t) ]^\frac{q}{2} \right) |\nabla \v|^2_{H^s} (t) \leq \frac{\d}{\d t} \mathcal{E}(t) + \left( \theta - C_3 \sum\limits_{q=1}^4 [\mathcal{E} (t) ]^\frac{q}{2} \right) \mathcal{D} (t) \leq 0 \, , $$
  which implies that $ \mathcal{E}(t) \leq \mathcal{E}(0) \leq 1  $ for all $t \in [0,T^*]$ and consequently
  $$ C_3 \sum\limits_{q=1}^4 [\mathcal{E} (t) ]^\frac{q}{2} \leq C_3 \sum\limits_{q=1}^4 [\mathcal{E} (0) ]^\frac{q}{2} \leq \frac{1}{2} \theta < \theta \, . $$
  By the definition of $T^*$ we know that $T^* = + \infty$ and
  \begin{equation}\no
    \sup_{ t \geq 0} \mathcal{E}(t) + \frac{1}{2} \int_0^{\infty} |\nabla \v|^2_{H^s}(t) \d t \leq \mathcal{E}(0) \leq (|\lambda_1|+2) E^{in}\, .
  \end{equation}

  Noticing that $1- \eta \geq \frac{1}{2}$, $1 - \eta \rho_1 \geq \frac{1}{2} \rho_1$ and
  \begin{align}
    \no \mathcal{E}(t) =& |\v|^2_{H^s} + \rho_1(1-\eta) |\dot{\dr}|^2_{H^s} + (1-\eta \rho_1 ) |\nabla \dr|^2_{H^s} \\
    \no& + \eta \rho_1 |\dot{\dr} + \dr|^2_{\dot{H}^s} + \eta \rho_1 |\nabla^{s+1} \dr|^2_{L^2} + \eta \rho_1 |\lambda_1| |\dr|^2_{\dot{H}^s} \\
    \no \geq& |\v|^2_{H^s} + \rho_1 (1-\eta) |\dot{\dr}|^2_{H^s} + (1-\eta \rho_1 ) |\nabla \dr|^2_{H^s} \\
    \no \geq& |\v|^2_{H^s} + \frac{1}{2} \rho_1 |\dot{\dr}|^2_{H^s} + \frac{1}{2} |\nabla \dr|^2_{H^s} \\
    \no \geq& \frac{1}{2} ( |\v|^2_{H^s} + \rho_1 |\dot{\dr}|^2_{H^s} +  |\nabla \dr|^2_{H^s} ) \, ,
  \end{align}
  we obtain the following inequality :
\begin{align}
  \no \sup_{ t \geq 0} \left( |\v|^2_{H^s} + \rho_1 |\dot{\dr}|^2_{H^s} +  |\nabla \dr|^2_{H^s} \right) +  \int_0^{\infty} |\nabla \v|^2_{H^s} \d t \leq 2 (|\lambda_1|+2) E^{in}  \, .
 \end{align}
 If we want to extend the solution $(\v,\dr)$ constructed in Part (I) of Theorem \ref{Main-Thm} to $[0, \infty)$, the condition $2 (|\lambda_1|+2) E^{in} \leq \eps_0$ is required. So there exists an $\eps_1 = \frac{1}{|\lambda_1| + 2} \min \big{\{} \frac{1}{2} \eps_0, \frac{\theta^2}{(8 C_3)^2} \big{\}} > 0$ such that if $E^{in} \leq \eps_1$, then there is a unique solution $(\v, \dr)$ to the system \eqref{Parabolic-Hyperbolic-Liquid-Crystal-Model}. Then the proof of Part (III) of Theorem \ref{Main-Thm} is finished.

%%%%%%%%%%%%%%%%%%%%%%%%%%%%%%%%%%%%%%%%%%%%%%%%%%%%%%%%%%%%%%%%%%%%%%%%%%%

\section*{Acknowledgement}
We first appreciate Prof. Fanghua Lin, who suggested the Ericksen-Leslie's parabolic-hyperbolic liquid crystal model to us when we visited  the NYU-ECNU Institute of Mathematical Sciences at NYU Shanghai during the spring semester of 2015. We also thanks the hospitality of host institute. We also thanks for the conversations with Zhifei Zhang and Wei Wang, in particular on their work \cite{Wang-Zhang-Zhang-ARMA2013}, which shares light on the current work. During the preparation of this paper, Xu Zhang made several valuable comments and suggestions, we take this opportunity to thank him here.

%%%%%%%%%%%%%%%%%%%%%%%%%%%%%%%%%%%%%%%%%%%%%%%%%%%%%%%%%%%%%%%%%%%%%%%%%%%
\bigskip
% \phantomsection
% \addcontentsline{toc}{section}{\refname}

%\bibliographystyle{unsrtnat}
%\nocite{*}
%\bibliography{reference}

%参考文献

\end{document}